\documentclass[11pt]{article}

\usepackage[a4paper,margin=1in]{geometry}
\usepackage{amsmath,amssymb,amsthm,mathtools}
\usepackage{array}
\usepackage{microtype}
\usepackage{hyperref}

\hypersetup{
  colorlinks=true,
  linkcolor=blue,
  citecolor=blue,
  urlcolor=blue
}

\newtheorem{theorem}{Theorem}[section]
\newtheorem{proposition}{Proposition}[section]
\newtheorem{lemma}{Lemma}[section]
\newtheorem{corollary}{Corollary}[section]
\newtheorem{definition}{Definition}[section]
\newtheorem{remark}{Remark}[section]
\newtheorem{example}{Example}[section]
\newtheorem*{mainthm}{Main theorem}

\newcommand{\T}{\mathbb T}
\newcommand{\R}{\mathbb R}
\newcommand{\Id}{I}
\newcommand{\Spp}{\mathbb S_{++}}
\newcommand{\Sym}{\mathbb S}
\newcommand{\tr}{\operatorname{tr}}
\newcommand{\divv}{\operatorname{div}}
\newcommand{\curl}{\operatorname{curl}}
\newcommand{\Log}{\operatorname{Log}}
\newcommand{\dd}{\,dx}
\newcommand{\calE}{\mathcal E}
\newcommand{\calC}{\mathcal C}

\newcommand{\calR}{\mathcal R}
\newcommand{\calG}{\mathcal G}

\newcommand{\calD}{\mathcal D}
\newcommand{\calF}{\mathcal F}

\newcommand{\eps}{\varepsilon}
\newcommand{\norm}[1]{\left\lVert #1\right\rVert}
\newcommand{\Besov}{B^0_{\infty,1}}

\title{Pressure Quotients and Endpoint Velocity-Clock Criteria for Non-Diffusive Viscoelastic Flows}
\author{Sai Peng\\
School of Mathematics and Computational Science, Xiangtan University\\
\texttt{pscfd@xtu.edu.cn}}
\date{July 2026}

\begin{document}
\maketitle

\begin{abstract}
We prove endpoint continuation criteria for stress-diffusion-free
incompressible viscoelastic flows by working modulo pressure.  In two space
dimensions, every smooth spectral isotropic stress has the pressure-free active
form \(q_1(a,|Y|^2)Y\), where \(C=aI+Y\) and \(\tr Y=0\).  The trace-free
conformation equation contains the universal stretching block \(2aS(u)\); a
weighted active-deviatoric energy cancels the top-order coupling between this
block and \(\operatorname{div}(q_1Y)\).  On compact conformation windows this gives
a high-order estimate with coefficient
\[
  1+\|\nabla u\|_{B^0_{\infty,1}}+\|\Log C\|_{H^{1+\varepsilon}}^2 .
\]
The abstract coefficient class is only a compact-window quotient template.  The
model consequences are: strong two-dimensional Oldroyd--B solutions continue
under \(\nabla u\in L^1_tB^0_{\infty,1}\), while strong two-dimensional FENE-P
solutions continue under \(\nabla u\in L^2_tB^0_{\infty,1}\).  In both cases the
needed compact window and logarithmic bound are derived from the velocity clock
and the model barriers, not assumed as independent hypotheses.  All criteria are
integer-Sobolev strong-solution criteria; no Leray-type weak-solution or
critical-space local theory is asserted.  The sharpness examples are static
operator obstructions for the pressure-free stress map, not dynamic blow-up
constructions.  In three dimensions the quotient has the residual split
\(T(Z)^\circ=(q_1+2aq_2)Y+q_2(Y^2)^\circ\).  Since \((Y^2)^\circ\) is generally
independent of \(Y\), the exact scalar quotient closure is intrinsically
two-dimensional.  On prescribed compact windows this residual is absorbed by
viscosity; for Oldroyd--B and FENE-P it vanishes because \(q_2\equiv0\).  No
claim is made for general anisotropic or non-spectral stresses.
\end{abstract}

\paragraph{Keywords.}
Oldroyd--B system; FENE-P system; pressure quotient; conformation tensor;
log-conformation; finite extensibility; scalar-Peterlin model; active
deviatoric stress; endpoint continuation; velocity clock; positive cone;
vorticity.

\paragraph{Mathematics Subject Classification (2020).}
35Q35; 76A10; 35B44; 35A01.

\section{Introduction}

Stress-diffusion-free viscoelastic systems couple parabolic fluid smoothing to
purely transported tensor dynamics.  The velocity equation is viscous, but the
conformation tensor has no spatial diffusion and enters the momentum equation
through the divergence of an elastic stress.  A continuation criterion must
therefore identify which part of the transported tensor can feed derivatives
back into the velocity equation.  This paper shows that, in two dimensions and
for spectral isotropic stresses, the relevant object is not the full elastic
stress but its pressure quotient.

An incompressible viscoelastic equation does not use the full elastic stress as
an absolute tensor.  It uses the stress only modulo pressure: adding an
isotropic tensor \(r(C)I\) changes the scalar pressure but not the projected
velocity dynamics.  The basic object is therefore the pressure-free active
part of the stress, namely the component that survives in vorticity and can
exchange derivatives with the non-diffusive conformation tensor.  This quotient
viewpoint is especially rigid in two space dimensions.

Indeed, let \(C\in\Spp^2\) be a positive conformation tensor and let
\(T(C)\) be a smooth spectral isotropic elastic stress.  By the smooth
isotropic representation theorem, followed by the two-dimensional
Cayley--Hamilton reduction,
\[
  T(C)=q_0(\tr C,\det C)I+q_1(\tr C,\det C)C .
\]
Writing \(C=aI+Y\), with \(a=\frac12\tr C\) and \(Y=C^\circ\), gives
\[
  T(C)=(q_0+aq_1)I+q_1Y .
\]
Since \(\det C=a^2-\frac12|Y|^2\), the scalar \(q_1\) may be regarded as a
function of \((a,|Y|^2)\).  Thus, after pressure projection, every
two-dimensional spectral isotropic stress has the same active direction:
\[
  [T(C)]_{\rm active}=q_1(a,|Y|^2)Y .
\]
The pressure-free anisotropic channel is not a feature of Oldroyd--B or FENE-P;
it is the natural quotient-level form of a two-dimensional isotropic tensorial
stress.  The scalar factor carries the constitutive law, while the deviatoric
tensor \(Y\) carries the only stress direction visible to the incompressible
velocity.

This observation changes the role of model classes in the continuation
problem.  The central regularity difficulty in stress-diffusion-free
viscoelastic flow is a derivative imbalance between the viscous velocity and a
non-diffusive conformation tensor.  The elastic stress enters the momentum
equation through a divergence, so one derivative of a transported quantity is
fed back into the parabolic velocity equation.  A useful criterion should
therefore be formulated at the level of the active pressure class of the
stress, not at the level of a chosen spring law.  In two dimensions this active
pressure class is exactly a scalar spectral coefficient times \(Y\).

The first result of the paper is the corresponding active-deviatoric
projection principle.  For upper-convected stretching, the trace-free equation
has the universal principal term \(2aS(u)\).  Testing this equation with the
weight \(q_1/a\) cancels the top-order velocity--stress coupling produced by
\(\divv(q_1Y)\).  Thus the leading continuation mechanism is dictated by the
pressure quotient and by two-dimensional isotropic spectral algebra, rather
than by a special cancellation in a particular constitutive model.

This projection is exactly two-dimensional, but the three-dimensional obstruction
can be isolated rather than hidden.  In three dimensions, the Cayley--Hamilton
representation of a spectral stress generally contains a quadratic term,
\[
  T(C)=q_0I+q_1C+q_2C^2 .
\]
Writing \(C=aI+Y\), \(a=\frac13\tr C\), \(\tr Y=0\), gives the pressure-free
identity
\[
  T(C)^\circ=(q_1+2aq_2)Y+q_2(Y^2)^\circ .
\]
Thus the single active direction is replaced by a cancellative \(Y\)-channel
and a Cayley--Hamilton residual \((Y^2)^\circ\).  This is an algebraic boundary,
not just a technical loss: in \(\Sym^3_0\), \((Y^2)^\circ\) is not generally a
scalar multiple of \(Y\).  The paper treats this residual explicitly in
Section~\ref{sec:three-dimensional}: on compact conformation windows it is a
viscosity-absorbed high-order remainder, while for Oldroyd--B and FENE-P the
residual coefficient \(q_2\) is identically zero.  This is the precise sense in
which the two-dimensional quotient is exact and its three-dimensional remnant
remains controlled for the principal spectral models.  The statement is not a
classification of arbitrary three-dimensional stresses.  A non-spectral or anisotropic stress can carry pressure-free tensor
components not expressible through \(I,C,C^2\); such components require their
own coercive structure or a direct stress clock, and are outside the quotient
closure proved here.

The second result packages this quotient structure into a compact-window analytic template.
For
\[
  C=aI+Y,\qquad \eta=|Y|^2,
\]
the compact-window normal form is
\[
  \partial_tu+u\cdot\nabla u-\nu\Delta u+\nabla p
  =\alpha\divv(\tau(a,\eta)Y),
\]
\[
  D_ta+\lambda^{-1}G(a,\eta)=S(u):Y,\qquad
  D_tY+\lambda^{-1}\mu(a,\eta)Y
  =2aS(u)+[\nabla u\,Y+Y(\nabla u)^T]^\circ .
\]
Here \(\tau(a,\eta)Y\) is the pressure-free active stress allowed by
two-dimensional spectral isotropy.  The relaxation terms \(G(a,\eta)\) and
\(\mu(a,\eta)Y\) record the scalar and deviatoric relaxation channels.  The
conditions imposed on these coefficients are compact-window continuation
conditions: positive active and deviatoric relaxation coefficients, together
with monotone scalar relaxation.  They are not proposed as necessary
thermodynamic axioms for all polymer laws.  When a physical interpretation is
wanted, one should additionally impose the free-energy compatibility and
relaxation-dissipation condition in Remark~\ref{rem:thermodynamic-subclass}.
Under these hypotheses the principal cancellation uses the weight
\(\tau(a,\eta)/a\), and the high-order energy closes with coefficient
\[
  1+\norm{\nabla u}_{B^0_{\infty,1}}
  +\norm{\Log C}_{H^{1+\eps}}^2 .
\]
The compact-window theorem should therefore be read as a quotient estimate, not
as a classification of polymer models.  The concrete constitutive laws in the
paper are Oldroyd--B and FENE-P.  Remark~\ref{rem:thermodynamic-subclass}
identifies the additional free-energy compatibility needed for thermodynamic
models, and Example~\ref{ex:quadratic-channel} records a polynomial
free-energy perturbation with a genuine nonzero three-dimensional quadratic
Cayley--Hamilton coefficient.  This example calibrates the abstraction; it is
not used to promote arbitrary coefficient choices to physical models.

The concrete model criteria are obtained after the relevant compact window is
propagated.  For Oldroyd--B, the endpoint clock
\[
  \int_0^T\norm{\nabla u(t)}_{B^0_{\infty,1}}\,dt<\infty
\]
propagates the positive-cone window.  On that window a low-order estimate gives
\[
  \Log A\in L^2(0,T;H^{1+\varepsilon}).
\]
For FENE-P, the positive-cone boundary is supplemented by the
finite-extensibility boundary \(\tr C=b\); the trace-gap barrier is propagated
by the squared clock
\[
  \nabla u\in L^2_tB^0_{\infty,1}.
\]
The same pressure-free anisotropic energy then closes the high-order estimate on
this propagated FENE window.

The Besov endpoint is the zero-order velocity-gradient control that supplies the
Lagrangian Lipschitz modulus.  The criteria below are nevertheless high-order
strong-solution restart criteria.  They do not assert existence, uniqueness, or
regularization for energy-level or Leray-type weak solutions; such statements
would require a separate low-regularity framework.  Nor are the criteria
intended as critical-space local well-posedness theorems.  Their ambient class is
the standard integer Sobolev class used in the local continuation principles
below.  Thus the hypotheses are not empty: for every admissible datum in that
class with a compact initial conformation window there is a positive local
existence time, and the endpoint clock rules out breakdown of that strong
solution.

\paragraph{Conformation and dimension conventions.}
In model-specific statements we use \(A\) only for the Oldroyd--B conformation
tensor and \(C\) for the FENE-P conformation tensor.  In the abstract spectral
quotient statements, \(C\) is a dummy positive conformation tensor; after
specialization it is replaced by \(A\) for Oldroyd--B and by \(C\) for FENE-P.
The FENE-P spring factor is dimension dependent.  In dimension \(d\),
\[
  f_{b,d}(C)=\frac{b-d}{b-\tr C},\qquad
  T_{b,d}(C)=f_{b,d}(C)C-I,
  \qquad
  \calD_{b,d}=\{C\in\Spp^d:\tr C<b\},
\]
and the physical parameter range is \(b>d\).  Thus the two-dimensional sections
use \(f_{b,2}=(b-2)/(b-\tr C)\) and \(b>2\), while
Section~\ref{sec:three-dimensional} uses \(f_{b,3}=(b-3)/(b-\tr C)\) and
\(b>3\).  These are the same condition \(b>d\) in different spatial dimensions,
not two competing assumptions on one model.

The incompressible Oldroyd--B system without artificial stress diffusion is
\begin{align}
  \partial_t u+u\cdot\nabla u-\nu\Delta u+\nabla p
    &= \alpha\divv(A-\Id), \label{eq:ob-momentum}\\
  \partial_t A+u\cdot\nabla A
    &= \nabla u\,A+A(\nabla u)^T-\lambda^{-1}(A-\Id),
    \label{eq:ob-conformation}\\
  \divv u&=0.
\end{align}
Here \(u:\T^2\to\R^2\), \(A:\T^2\to\Spp^2\), and
\(\nu,\alpha,\lambda>0\).  The conformation tensor has no spatial diffusion.
The velocity is parabolic, but the force in the vorticity equation contains
\(\curl\divv A\), so derivatives of the stress enter the velocity equation at
the same level at which parabolic smoothing is being used.

The corresponding stress-diffusion-free FENE-P system is
\begin{align}
  \partial_t u+u\cdot\nabla u-\nu\Delta u+\nabla p
    &= \alpha\divv T_b(C), \label{eq:fene-momentum}\\
  \partial_t C+u\cdot\nabla C
    &= \nabla u\,C+C(\nabla u)^T
      -\lambda^{-1}\bigl(f_b(C)C-\Id\bigr),
    \label{eq:fene-conformation}\\
  \divv u&=0,
\end{align}
where, in the two-dimensional normalization used below,
\[
  f_b(C)=f_{b,2}(C)=\frac{b-2}{b-\tr C},\qquad T_b(C)=f_b(C)C-\Id .
\]
The Oldroyd--B system is recovered formally on bounded trace windows as the
Hookean limit \(b\to\infty\), where \(f_b(C)\to1\) and
\(T_b(C)\to C-\Id\).  At fixed \(b\), however, derivatives of the spring
factor contain powers of \((b-\tr C)^{-1}\), and these coefficients cannot be
read from the Oldroyd--B logarithmic variable alone.

The continuation result is stated in terms of these geometric variables, but
for Oldroyd--B the spectral window is not an additional hypothesis.  The
endpoint velocity clock
\[
  \int_0^T\norm{\nabla u(t)}_{\Besov}\,dt<\infty
\]
propagates both upper and lower eigenvalue bounds for \(A\) by a Lagrangian
comparison argument.  Once this automatic cone control is separated out,
the low-order pressure-free estimate gives
\(\Log A\in L^2(0,T;H^{1+\varepsilon})\), and the high-order estimate then
restarts the solution.  The improvement from the purely logarithmic
\(L^4_t\) closure to an \(L^2_t\) logarithmic estimate is obtained by the
pressure-free physical decomposition \(A=aI+Y\).
For FENE-P the same positive-cone geometry remains, and the finite-extensibility boundary is controlled by a scalar barrier.  The restoring force
near \(\tr C=b\) prevents trace-gap collapse under
\(\nabla u\in L^2_tB^0_{\infty,1}\).  On the resulting compact FENE window,
the low-order estimate again gives
\(\Log C\in L^2_tH^{1+\eps}_x\), and the pressure-free anisotropic energy
controls the remaining differentiability channel.

The main theorem may be summarized as follows.

\begin{mainthm}[Informal form]
In two dimensions, every spectral isotropic elastic stress has an active
deviatoric part \(q_1Y\) after pressure projection, and this active part
cancels at top order against upper-convected stretching when the trace-free
equation is tested with weight \(q_1/a\).  More generally, for compact-window
spectrally admissible active channels
\[
  \divv(\tau(a,|Y|^2)Y),\qquad
  D_tY+\lambda^{-1}\mu(a,|Y|^2)Y=\cdots ,
\]
with positive \(\tau,\mu\) and stable scalar relaxation, this gives a
compact-window endpoint estimate with coefficient
\[
  1+\norm{\nabla u}_{B^0_{\infty,1}}
  +\norm{\Log C}_{H^{1+\eps}}^2 .
\]
Consequently, a strong two-dimensional Oldroyd--B solution continues beyond
\(T\) if
\[
  \int_0^T\norm{\nabla u(t)}_{B^0_{\infty,1}}\,dt<\infty,
\]
and a strong FENE-P solution continues beyond \(T\) if
\[
  \int_0^T\norm{\nabla u(t)}_{B^0_{\infty,1}}^2\,dt<\infty .
\]
In both cases the \(L^2_tH^{1+\eps}_x\) logarithmic conformation bound is
derived on the propagated compact window rather than assumed as an external
continuation hypothesis.
The different velocity clocks come from window propagation: the Oldroyd--B
positive cone is propagated by the endpoint flow-map clock, while the FENE-P
trace gap has a finite-extensibility restoring force with boundary singularity
exponent one.  In the general trace-gap comparison, a restoring singularity
\((b-\tr C)^{-\theta}\) gives the velocity-clock exponent
\((1+\theta)/\theta\).
In three dimensions the full two-dimensional spectral reduction is no longer
available.  For a dummy tensor \(Z=aI+Y\) and spectral isotropic laws with a
smooth local Cayley--Hamilton representation on the compact window, the pressure
quotient instead splits as
\[
  T(Z)^\circ=\tau_1(Z)Y+\tau_2(Z)(Y^2)^\circ,
\]
where the second term is the Cayley--Hamilton residual.  The \(Y\)-channel is
the only channel that participates in the principal cancellation; the residual
channel is estimated separately and absorbed by viscosity on compact windows.
This compact-window statement does not cover arbitrary anisotropic or
non-spectral stresses, and the quotient algebra by itself does not propagate the
window.  For Oldroyd--B and FENE-P, however, \(\tau_2\equiv0\) and the needed
windows are propagated by the model-specific flow-map and trace-gap barriers.
Thus the principal three-dimensional model consequences are a pure velocity-clock
criterion for Oldroyd--B, a pure-clock blow-up alternative for FENE-P, and a
squared-clock window-propagated criterion for FENE-P.  Separately, if one tries
to replace this compact-window route by a fully logarithmic entropy route for
three-dimensional FENE-P, the entropy variables have a finite-dimensional
mobility threshold: they are monotone on every compact FENE window when
\(b\ge15/4\), while for \(3<b<15/4\) an additional anisotropy-controlled spectral
window is required.  This side calculation explains the parameter \(q_*\); it is
not an extra hypothesis in the velocity-clock criteria.
\end{mainthm}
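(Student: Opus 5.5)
The informal main theorem collects several results, and I would prove it in the order the statement lists them, taking the algebraic quotient first, then the abstract compact-window estimate, and finally the model-specific window propagation.

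\emph{Algebraic quotient.} I first invoke the smooth isotropic representation theorem together with Cayley--Hamilton in \(\Sym^2\) to write \(T(C)=q_0I+q_1C\). Substituting \(C=aI+Y\) and using \(\det C=a^2-\frac12|Y|^2\) gives the active part \(q_1(a,|Y|^2)Y\) modulo pressure. In three dimensions the same substitution into \(q_0I+q_1C+q_2C^2\) gives
\[
C^2=a^2I+2aY+Y^2 ,
\]
and removing the trace yields \((q_1+2aq_2)Y+q_2(Y^2)^\circ\).

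\emph{Principal cancellation.} Apply \(\partial^k\), with \(k\) the integer Sobolev order, to the \(Y\)-equation. Test it against \((\tau/a)\partial^kY\), and test the momentum equation against \(\alpha^{-1}\partial^k u\), using an \(\alpha\) normalization so the two weights match. The top-order terms are
\[
2\int \tau\,\partial^kS(u):\partial^kY
\quad\text{and}\quad
\int \partial^k\divv(\tau Y)\cdot\partial^ku = -\int \tau\,\partial^kY:\partial^k\nabla u ,
\]
up to commutators, and these cancel because \(Y\) is symmetric. The remaining pieces are handled as follows:
\begin{itemize}
\item The \(\partial^k\tau\,Y\) pieces involve \(\partial_a\tau\,\partial^ka\) and \(\partial_\eta\tau\,\partial^k\eta\). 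I control them by the \(a\)-equation energy together with viscosity, \(\nu\|\nabla\partial^ku\|^2\), which absorbs one derivative.
\item The transport commutators are bounded through Kato--Ponce/Bony estimates by \(\|\nabla u\|_{B^0_{\infty,1}}\) times the energy, using the logarithmic Besov inequality, or exactly the \(B^0_{\infty,1}\) product estimate.
\item Products of \(\partial^k\) of compositions of \(\Log C\) are bounded via Moser estimates on the compact window. The quadratic term \(\|\Log C\|_{H^{1+\eps}}^2\) enters where two low derivatives of the conformation sit in \(L^\infty\)-type norms, using \(H^{1+\eps}\hookrightarrow W^{1,p}\) in 2D.
\end{itemize}
Grönwall then gives the compact-window estimate.

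\emph{Window propagation.} For Oldroyd--B I integrate along the flow map. The quantities \(\xi^TA\xi\) satisfy a linear ODE with growth \(\le 2|\nabla u|_{L^\infty}\), so the eigenvalue bounds are \(\exp(\pm C\int\|\nabla u\|_{L^\infty})\), using \(\|\cdot\|_{L^\infty}\le\|\cdot\|_{B^0_{\infty,1}}\). Next comes the low-order estimate for \(\Log A\) in \(L^2_tH^{1+\eps}\). I would use the \(a\)/\(Y\) split, where \(\nabla a\) is transported with forcing \(S(u):Y\). I would couple this with the vorticity equation, whose elastic forcing \(\curl\divv Y\) is the pressure-free channel, by taking the combined quantity \(\omega-\alpha\nu^{-1}\mathcal R Y\) to get \(L^2_t\) rather than \(L^4_t\).

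For FENE-P I write the scalar barrier \(g=b-\tr C\). Along trajectories
\[
D_tg\ge -2|\nabla u|\,(b-g)+\lambda^{-1}\bigl(f_b\tr C-2\bigr) ,
\]
so the restoring term is of size \(\sim c/g\). Comparison with \(\dot g = c/g - K(t)\) then gives \(g\gtrsim \bigl(1+\int K^2\bigr)^{-1}\)-type lower bounds. In general, a singularity of order \(g^{-\theta}\) requires \(K\in L^{(1+\theta)/\theta}\). The high-order estimate then closes as before.

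\emph{Main obstacle.} I expect the hardest step to be the low-order \(L^2_tH^{1+\eps}\) logarithmic estimate: controlling \(\nabla\Log A\) in a fractional space without stress diffusion, using only the endpoint clock. I would try first the vorticity–stress combined quantity to exploit the parabolic gain, with an \(H^{\eps}\) commutator estimate for transport by \(B^0_{\infty,1}\) gradients. The 3D residual \(q_2(Y^2)^\circ\) is lower order on compact windows: its \(\partial^k\) is paired against \(\nabla\partial^{k-1}u\) after integration by parts and absorbed by viscosity.
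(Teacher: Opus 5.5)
\textbf{Genuine gaps: the cancellation as written fails, the low-order step is unexecuted, and the 3D closure and mobility threshold are missing.}

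Your outline follows the paper's architecture: quotient algebra, weighted trace-free energy, Lagrangian cone comparison, trace-gap comparison with exponent $(1+\theta)/\theta$, and the residual absorbed by viscosity. But the central cancellation, as you normalized it, does not occur.
\begin{itemize}
\item Testing the momentum equation against $\alpha^{-1}\partial^k u$ gives $-\int\tau\,\partial^kY:S(\partial^k u)\,dx$. Testing the $Y$-equation against $(\tau/a)\partial^kY$ gives $+2\int\tau\,\partial^kY:S(\partial^k u)\,dx$. Symmetry of $Y$ only converts $\nabla\partial^k u$ into $S(\partial^k u)$, so one copy survives.
\item The weight must be $\tau/(2a)$. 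The paper tests the $Y$-equation with $\frac{\alpha}{2}\frac{\tau}{a}\partial^\beta Y$ while testing the momentum equation with $\partial^\beta u$, so the energy carries $\frac{\alpha}{4}\int(\tau/a)|\partial^\beta Y|^2\,dx$.
\item The time-dependent weight must then be paid for through $\norm{D_t(\tau/a)}_{L^\infty}\le C_K(1+\norm{\nabla u}_{L^\infty})$, which you do not mention.
\end{itemize}

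Your trace-gap bound is also misstated. With only $K\in L^2_t$, the comparison does not give $g\gtrsim(1+\int K^2)^{-1}$. The reciprocal comparison $\frac{d}{dt}g^{-1}\le Kg^{-2}-cg^{-3}\le CK^3$ would need $K\in L^3_t$. The paper works with $\phi_b=-\log g$, for which completing the square gives $D_t\phi_b\le C(1+K^2)$, hence $g\ge g_0\exp\bigl(-C\int_0^t(1+K^2)\bigr)$. Only positivity is used, so the conclusion survives. You still need the FENE-P lower spectral bound, which follows from the Dini inequality once the propagated gap bounds $f_b$.

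The remaining steps are unexecuted or absent.

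\emph{Low-order logarithmic bound.} For $\Log C\in L^2_tH^{1+\eps}$ you only sketch the combined quantity $\omega-\alpha\nu^{-1}\mathcal RY$. Your stated reason for it misplaces the issue: the $L^2_t$-versus-$L^4_t$ gain concerns the power of $\norm{\Log C}_{H^{1+\eps}}$ in the high-order coefficient, not this step. The paper uses no combined quantity. It runs the same $(u,h,Y)$ energy at $s=1+\eps$:
\begin{itemize}
\item on the compact window every coefficient is bounded, so $\divv(\tau Y)$ and $2aS(u)$ pair with $\norm{u}_{H^{2+\eps}}$ and are absorbed by viscosity;
\item $\mu\ge\mu_0$ and $\partial_aG\ge\kappa$ give damping;
\item Gronwall with coefficient $1+\norm{\nabla u}_{\Besov}$ yields an $L^\infty_t$, hence $L^2_t$, bound.
\end{itemize}
One caution on that route. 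Above $s=1$ the transport commutator generally carries an extra term of size $\norm{u}_{H^2}\norm{Y}_{H^{1+\eps}}^2$, which is superlinear unless $\norm{u}_{H^2}$ is already time-integrable. A preliminary $H^1$ estimate supplies $u\in L^2_tH^2$: there the commutator is exactly $\nabla u\cdot\nabla Y$, and the estimate closes with $1+\norm{\nabla u}_{L^\infty}$.

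\emph{Three dimensions.} You never say how the high-order estimate closes in 3D, where $H^{1+\eps}$ no longer embeds in $L^\infty$. The paper uses a two-tier closure: an $H^3$ energy bound controls the $W^{1,\infty}$ coefficients of the $H^m$, $m\ge4$, estimate.

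\emph{Entropy-mobility threshold.} This part of the statement is not addressed at all. It rests on the explicit Jacobian $D_pF=\theta\,\mathrm{diag}(\ell_i^{-2})-\frac{\theta}{b}\mathrm{diag}(\ell_i^{-1})ee^T+\frac1b ee^T$, with $\theta=(b-r)/(b-3)$.
\begin{itemize}
\item After $w_i=z_i/\ell_i$ and minimizing $|w|^2$ at fixed $(e\cdot w,m\cdot w)$, the symmetric part becomes a $2\times2$ form with determinant $\theta\{a_*/q-(\theta/4-b/3)/b^2\}$.
\item This determinant is positive whenever $\theta\le4b/3$. Since $\theta<b/(b-3)$, and $b/(b-3)\le4b/3$ exactly when $b\ge15/4$, this gives the unconditional range.
\item For $3<b<15/4$, large-variance spectra with $\theta>4b/3$ force the restriction $q<q_*(b,r)$.
\end{itemize}
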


\paragraph{A model-facing reading guide.}
For readers interested primarily in Oldroyd--B and FENE-P, the proof can be
read as a three-step mechanism.  First, the pressure projection removes the
isotropic stress and leaves only the deviatoric active channel.  Second, the
trace-free conformation equation contains exactly the stretching term needed to
cancel the highest-order coupling between this channel and the velocity.  Third,
the endpoint velocity clock propagates the conformation window for the model at
hand, after which the low-order logarithmic estimate and the high-order restart
estimate close the continuation argument.  Definition~\ref{def:spectral-admissible}
only records the compact-window quotient estimate behind this mechanism; it may
be skipped on a first reading of the two concrete model criteria.

\subsection{Main results}
\label{subsec:main-results}

We keep the model notation fixed: \(A\) denotes the Oldroyd--B conformation
tensor and \(C\) denotes the FENE-P conformation tensor.  Model-free spectral
lemmas use a dummy tensor, denoted by \(C\) in two dimensions and by \(Z\) in
Section~\ref{sec:three-dimensional}.  For FENE-P the Peterlin factor always uses
the ambient dimension: \(f_{b,2}=(b-2)/(b-\tr C)\) in two dimensions and
\(f_{b,3}=(b-3)/(b-\tr C)\) in three dimensions.  The first result is the
pressure-free active-deviatoric projection principle proved in
Section~\ref{sec:criterion}.  For every two-dimensional spectral isotropic
stress law, the active stress after pressure projection has the form \(q_1Y\).
This identifies the unique stress component that enters vorticity and the
top-order velocity--conformation coupling.

The second result is a compact-window quotient endpoint estimate.  For active channels
\(\tau(a,|Y|^2)Y\), coercive deviatoric relaxation
\(\mu(a,|Y|^2)Y\), and monotone scalar relaxation \(G(a,|Y|^2)\), the weighted
active-deviatoric energy closes under the coefficient
\[
  1+\norm{\nabla u}_{\Besov}
  +\norm{\Log C}_{H^{1+\eps}}^2 .
\]
This is a high-order analytic estimate on a prescribed compact window, not a
constitutive classification.  Thermodynamic polymer closures form the
subclass for which the stress and relaxation are derived from a common
free-energy density in the sense of Remark~\ref{rem:thermodynamic-subclass}.
Example~\ref{ex:quadratic-channel} gives a nontrivial member of this subclass
whose three-dimensional stress contains an actual \(C^2\) component.
The scalar-Peterlin theorem is the trace-only specialization
\[
  \tau=\psi(2a),\qquad \mu=\phi(2a),\qquad
  G=a\phi(2a)-\chi(2a),
\]
and the Oldroyd--B and FENE-P estimates are recovered from this specialization.

The third result is the Oldroyd--B continuation theorem.  A strong solution can
be continued as long as
\[
  \int_0^T\norm{\nabla u(t)}_{\Besov}\,dt<\infty .
\]
The compact positive-cone window needed by the high-order energy estimate is
not assumed; it is propagated by the velocity clock.  On this window, the
low-order pressure-free estimate gives
\(\Log A\in L^2(0,T;H^{1+\eps})\), which makes the high-order coefficient
integrable.  Equivalently, finite-time Oldroyd--B breakdown forces divergence
of the velocity Besov clock.

The fourth result is the FENE-P continuation theorem proved in
Section~\ref{sec:fene-barrier}.  The FENE-P compact window has two
boundaries:
\[
  C>0,\qquad \tr C<b.
\]
The lower spectral boundary is propagated by the velocity clock, as in
Oldroyd--B.  The finite-extensibility boundary is propagated by the scalar
barrier equation once
\(\nabla u\in L^2(0,T;B^0_{\infty,1})\).  On the resulting compact FENE window,
the pressure-free splitting \(T_b(C)=(f_ba-1)I+f_bY\) gives the same principal
anisotropic cancellation with weight \(f_b/a\).  Thus the FENE-P continuation
criterion is
\[
  \int_0^T\norm{\nabla u(t)}_{\Besov}^2\,dt<\infty .
\]
The logarithmic conformation bound needed by the high-order estimate is
derived on the propagated compact FENE window.

The fifth result is a three-dimensional compact-window extension with an
explicit Cayley--Hamilton residual split.  This result has a narrower scope than
the two-dimensional theorem.  In the model-free three-dimensional statements we
write \(Z=aI+Y\), \(a=\frac13\tr Z\), and remove scalar stress through the
pressure.  For spectral isotropic stress laws admitting a smooth local
Cayley--Hamilton representation on a compact spectral window,
\[
  T(Z)=q_0(Z)I+q_1(Z)Z+q_2(Z)Z^2,
\]
the pressure-free part is
\[
  T(Z)^\circ=(q_1+2aq_2)Y+q_2(Y^2)^\circ .
\]
The first term is the cancellative active channel.  The second term is a
quadratic residual.  Proposition~\ref{prop:3d-algebraic-boundary} shows that
this residual is generically an independent pressure-free direction, so a single
scalar quotient closure is a genuinely two-dimensional feature.  After one
integration by parts the residual pairs with the viscous velocity dissipation
and is controlled by compact-window Moser estimates.  This is not a theorem for
arbitrary anisotropic or non-spectral stresses, for which additional
pressure-free tensor directions may be present.  Nor does the algebraic split
itself propagate the compact window: the compact-window criterion is conditional
until a model-specific barrier supplies that window.
For Oldroyd--B and FENE-P the residual is absent, because their constitutive
representations are explicit with \(q_2=0\), and the required windows are
obtained from the flow-map and trace-gap estimates.  Hence a smooth
three-dimensional Oldroyd--B solution can break down only if
\[
  \int_0^{T_*}\norm{\nabla u(t)}_{\Besov}\,dt=\infty .
\]
For three-dimensional FENE-P, finite-time breakdown forces at least one of the
following: divergence of the same pure velocity clock, loss of the lower
positive-cone bound, or collapse of the trace gap \(b-\tr C\).  Under the
squared clock
\[
  \int_0^{T_*}\norm{\nabla u(t)}_{\Besov}^2\,dt<\infty,
\]
the compact FENE window is propagated and the same continuation mechanism
applies.
Finally, a separate finite-dimensional entropy-mobility calculation is recorded
for a possible three-dimensional FENE-P logarithmic formulation.  It is not used
in the velocity-clock proof.  Its role is to identify when the entropy-variable
mobility is monotone: this is automatic on compact FENE windows when
\(b\ge15/4\), while for \(3<b<15/4\) the spectral variance must remain below the
explicit threshold \(q_*(b,r)\).

The final result records a static operator obstruction associated with the
logarithmic regularity used in the proof.  For active forcing measured in
\(H^{-1+\gamma}\), the estimate based on the pressure-free stress map cannot be
closed below \(\Log C\in H^{1+\gamma}\) without additional dynamical input.
Indeed, below this threshold there are fixed-spectrum sequences for which every
zeroth-order spectral density is pointwise constant, while
\[
  \norm{\curl\divv T(C_N)}_{H^{-1+\gamma}}\to\infty .
\]
The obstruction is high-frequency rotation of the active deviatoric channel.
It explains why the logarithmic term appears in the proof, but it is not a
finite-time blow-up or dynamic optimality theorem.

The paper is organized as follows.  Section~\ref{sec:preliminaries} records the
analytic estimates used in the proof.  Section~\ref{sec:entropy} records the
positive cone and entropy structure inherited by both models.  Section~\ref{sec:log}
  gives the logarithmic equation and the good unknown.  Section~\ref{sec:criterion}
  proves the active-deviatoric projection principle, the full
  compact-window quotient estimate, its scalar-Peterlin specialization,
  and the Oldroyd--B continuation criterion, including
the propagation of the compact spectral window from the endpoint velocity
clock.  Section~\ref{sec:no-go} proves the entropy-blind obstruction and the
static \(H^{1+\gamma}\) active-forcing obstruction.
Section~\ref{sec:fene-barrier} derives the FENE-P endpoint continuation
criterion and the propagation of the
finite-extensibility trace gap.
Section~\ref{sec:three-dimensional} gives the three-dimensional
Cayley--Hamilton residual split, states its exact compact-window scope, derives
model-specific consequences for Oldroyd--B and FENE-P, and records the FENE-P
entropy-mobility boundary.
Section~\ref{sec:hookean-limit}
compares the FENE-P criterion with Oldroyd--B in the Hookean limit.

\subsection{Main mechanism and proof structure}

The pressure quotient leaves a single active direction in two dimensions.  For
\(C=aI+Y\), \(Y=C^\circ\), a smooth spectral isotropic stress has the
pressure-free form
\[
  [T(C)]_{\rm active}=q_1(a,|Y|^2)Y .
\]
Thus the same deviatoric variable that appears in the trace-free conformation
equation is the only stress component visible to the incompressible velocity.
For such compact-window quotient channels the differentiated velocity equation
contains the leading term
\[
  -\alpha\int \tau(a,\eta)\,\partial^\beta Y:S(\partial^\beta u)\,dx,
  \qquad \eta=|Y|^2,
\]
while the trace-free equation, tested with the weight
\[
  W(a,\eta)=\frac{\tau(a,\eta)}{a},
\]
contains the opposite contribution
\[
  \frac{\alpha}{2}\int W(a,\eta)\partial^\beta Y:2aS(\partial^\beta u)\,dx
  =
  \alpha\int \tau(a,\eta)\,\partial^\beta Y:S(\partial^\beta u)\,dx .
\]
The top-order velocity--stress coupling therefore cancels at the level of the
pressure-free variables.

After the endpoint transport commutators and derivatives of the weight are
estimated, the compact-window high-order energy has coefficient
\[
  1+\norm{\nabla u}_{B^0_{\infty,1}}
  +\norm{\Log C}_{H^{1+\eps}}^2 .
\]
A low-order pressure-free estimate gives the logarithmic term on the
propagated compact window.  The model-specific part of the argument is the
window propagation: Oldroyd--B uses the Lagrangian positive-cone comparison,
whereas FENE-P also uses the scalar trace-gap barrier.  This yields the
\(L^1_tB^0_{\infty,1}\) velocity clock for Oldroyd--B and the squared
\(L^2_tB^0_{\infty,1}\) clock for FENE-P.

The proof is organized accordingly.  First, the stress is reduced modulo
pressure to \(q_1Y\).  Second, the weighted trace-free energy cancels the
principal force \(\divv(q_1Y)\).  Third, the relevant compact spectral window is
propagated by the scalar comparison available in the model under consideration.
On this propagated window, the low-order estimate yields the logarithmic
regularity.  Fourth, this logarithmic bound closes the high-order estimate and
allows the strong solution to be restarted.

\subsection{Relation with Euler and stress-diffusive models}

The regularity problem for the two-dimensional Oldroyd--B system
without stress diffusion lies between two regimes that are much better
understood.  If stress diffusion is imposed, the missing derivative in the
conformation equation is restored by parabolic smoothing.  If the data are
small, perturbative mechanisms can keep the non-diffusive stress from feeding a
large cascade back into the velocity.  The regime considered here has neither source of control: no artificial
stress dissipation and no smallness.

For the two-dimensional Euler equation, vorticity is transported and the
classical obstruction is growth of the Lipschitz norm of the velocity.  For
Oldroyd--B without stress diffusion, the vorticity equation contains the extra
forcing \(\curl\divv A\).  The stress is transported and stretched, but it has
no parabolic regularization.  Thus the Euler-type vorticity criterion becomes
a coupled velocity--conformation criterion; see Corollary~\ref{cor:coupled}.
The endpoint velocity clock prevents Oldroyd--B escape from the positive cone,
while the logarithmic channel controls the remaining high-frequency
stress mechanism.

\subsection{Related work}

The Oldroyd model goes back to Oldroyd's constitutive theory
\cite{Oldroyd1950}; standard continuum and kinetic accounts include
\cite{Bird1987,BirdCurtiss1987,DoiEdwards1986,Larson1988,BerisEdwards1994,Renardy2000,OwensPhillips2002}.
Strong and weak solution theories for differential viscoelastic models were
developed in several directions, including the early work of Guillope and Saut
\cite{GuillopeSaut1990}, the global weak-solution framework of Lions and
Masmoudi \cite{LionsMasmoudi2000}, and the small-data smooth theory of Lin,
Liu, and Zhang \cite{LinLiuZhang2005}, and the boundary-value theory of Lin and
Zhang \cite{LinZhang2008}.  Lifespan and breakdown criteria for
related viscoelastic systems were studied by Chemin and Masmoudi
\cite{CheminMasmoudi2001} and by Lei, Masmoudi, and Zhou
\cite{LeiMasmoudiZhou2010}.  When additional stress diffusion is present,
two-dimensional regularity becomes substantially more tractable; see, for
example, Constantin and Kliegl \cite{ConstantinKliegl2012}, the global
regularity results of Elgindi and Rousset \cite{ElgindiRousset2015}, and the
regularized and numerical frameworks of Barrett--Boyaval and Barrett--S\"uli
\cite{BarrettBoyaval2011,BarrettSuli2007}.  The
pressure handling in these diffusive works is different from the quotient used
here.  There, Leray projection or auxiliary vorticity--stress combinations enter
parabolic or transformed energy estimates, where stress diffusion, damping, or a
model-specific transformed variable supplies the missing derivative.  Here the
isotropic stress is removed algebraically before the high-order estimate, and the
remaining active deviatoric channel is paired directly with upper-convected
stretching; no stress Laplacian is available.  In the stress-diffusion-free
system, global control is also known in small-data regimes; a recent example is
the work of Tu, Wang, and Wen \cite{TuWangWen2024}.  The results below address
the regime without stress diffusion and without smallness, using variables
adapted to the conformation geometry.

The logarithmic conformation representation was introduced in numerical
rheology by Fattal and Kupferman \cite{FattalKupferman2004,FattalKupferman2005}
and further developed in high-Weissenberg computations by Hulsen, Fattal, and
Kupferman \cite{HulsenFattalKupferman2005} to preserve positivity and improve
stability.  The
present use is analytic: the logarithmic chart separates spectral control from
high-frequency concentration.  The velocity part of the criterion follows the
spirit of the Beale--Kato--Majda continuation principle
\cite{BealeKatoMajda1984}, but the non-diffusive stress equation forces the
additional logarithmic stress diagnostic.

The finitely extensible models considered below originate from the standard
FENE closures for dilute polymeric fluids; see, for example,
\cite{BirdCurtiss1987,DoiEdwards1986}.  Mathematical work on FENE dumbbell
models has also emphasized the role of the finite-extension boundary; representative global well-posedness and weak-solution results are due to
Masmoudi \cite{Masmoudi2008,Masmoudi2013}, while finite-element and weak-solution
approaches for
regularized kinetic closures are developed in \cite{BarrettSuli2007,BarrettSuli2012}.  The FENE-P criterion below isolates the finite-extensibility boundary in an
endpoint continuation framework: the trace-gap barrier is propagated by the
squared endpoint velocity clock, and the same pressure-free active-deviatoric
cancellation closes the high-order continuation estimate.

\subsection{Relation with direct continuation criteria}

Classical non-blowup criteria for Oldroyd-type systems monitor the stress
itself.  Chemin and Masmoudi obtained criteria involving direct stress norms,
and Lei, Masmoudi, and Zhou gave a BKM-type formulation with
\(\tau\in L^1(0,T;{\rm BMO})\), together with low-integrability stress control.
The criterion here is not a norm inclusion between stress and velocity clocks.
It is a pressure-quotient certification: the isotropic stress is removed before
the high-order coupling is estimated.  This is also distinct from the pressure
manipulations used in stress-diffusive global regularity arguments, where the
projection is coupled to parabolic or transformed energy control.

This distinction is visible already for pure pressure modes.  If
\(p_N(x)=N\sin(Nx_1)\) and \(\sigma_N=p_NI\), then
\[
  \mathbb P\divv \sigma_N=0,
  \qquad
  \norm{\sigma_N}_{{\rm BMO}}\gtrsim N,
\]
where \(\mathbb P\) is the Leray projector.  A direct full-stress clock can
therefore be large because of a component that exerts no force on the
incompressible velocity.

Conversely, once the velocity clock has propagated a compact positive-cone
window for Oldroyd--B, the usual direct stress clocks are recovered.  If
\(cI\le A(t,x)\le CI\), then \(\tau=A-I\) satisfies
\[
  \norm{\tau(t)}_{{\rm BMO}}
  \le 2\norm{\tau(t)}_{L^\infty}
  \le C_{c,C},
  \qquad
  \norm{\tau(t)}_{L^1}+\norm{\tau(t)}_{L^2}
  \le C_{c,C} .
\]
Lemma~\ref{lem:oldroyd-spectral-window} shows that this window is propagated by
the endpoint velocity clock from positive initial data.  Thus, along the
solutions covered by Theorem~\ref{thm:continuation}, the direct stress
hypotheses are consequences rather than independent assumptions.

The reverse implication does not follow from direct stress information alone.
For \(A_\varepsilon=\varepsilon I\),
\[
  \norm{A_\varepsilon-I}_{{\rm BMO}}=0,
  \qquad
  \norm{A_\varepsilon-I}_{L^1}\le C,
  \qquad
  \norm{\Log A_\varepsilon}_{L^\infty}=|\log\varepsilon|\to\infty .
\]
Direct stress BMO and low-integrability bounds do not encode the lower
positive-cone boundary or the logarithmic chart.  The pressure-quotient
criterion should therefore be read as a one-way geometric mechanism: a finite
endpoint velocity clock propagates the conformation window, the low-order estimate gives the logarithmic regularity used in the high-order estimate, and
the argument then implies the
standard direct stress clocks on that window.

\section{Functional Setting and Continuation Framework}
\label{sec:functional}

We work on the periodic torus \(\T^2\) to avoid boundary effects.  The same
local estimates apply on the whole plane with the standard modifications for
decay and low frequencies.  The continuation theorems are formulated at
integer Sobolev levels \(s=m\in\mathbb N\), \(m\ge3\).  Thus the top-order
cancellations are proved by classical differentiations and the endpoint clock is
used to prevent breakdown of an already existing strong solution.  The only fractional
Sobolev input used in the argument is the low-order
\(H^{1+\eps}\) logarithmic bound, and its endpoint transport estimate is
proved below in dyadic Littlewood--Paley form.

\paragraph{Notation.}
Constants denoted by \(C\) may change from line to line.  Constants denoted by
\(C_K\) may also depend on a fixed spectral window
\(\norm{B}_{L^\infty}\le K\).  In the two-dimensional FENE-P part, \(K\) also
denotes a compact subdomain of
\[
  \calD_b=\calD_{b,2}=\{C\in\Spp^2:\tr C<b\},\qquad b>2;
\]
the constants may then depend on \(b\), the lower spectral gap of \(C\), and
the upper trace gap \(b-\tr C\).  Throughout the high-order index \(s\) is an integer \(m\ge3\).  This
keeps the continuation statement aligned with the estimates actually proved.
The fractional exponent \(1+\eps\) appearing later is handled explicitly by
the endpoint dyadic commutator Lemma~\ref{lem:endpoint-transport}; no separate
fractional high-order continuation theorem is invoked.

\begin{definition}[Strong positive-cone solution]
Let \(T>0\).  A strong positive-cone solution on \([0,T]\) is a pair
\((u,A)\) such that
\[
  u\in C([0,T];H^s),\qquad
  A\in C([0,T];H^s(\T^2;\Spp^2)),
\]
\[
  u\in L^2(0,T;H^{s+1}),\qquad \divv u=0,
\]
and \eqref{eq:ob-momentum}--\eqref{eq:ob-conformation} hold in the classical
Sobolev sense.
\end{definition}

\begin{definition}[Strong FENE-P solution]
Let \(b>2\) and \(T>0\).  A strong FENE-P solution on \([0,T]\) is a pair \((u,C)\) such
that
\[
  u\in C([0,T];H^s),\qquad
  C\in C([0,T];H^s(\T^2;\Spp^2)),
\]
\[
  u\in L^2(0,T;H^{s+1}),\qquad \divv u=0,\qquad C(t,x)\in\calD_b,
\]
and \eqref{eq:fene-momentum}--\eqref{eq:fene-conformation} hold in the
classical Sobolev sense.
\end{definition}

\begin{remark}[Strong-solution scope, low regularity, and non-vacuity]
\label{rem:strong-solution-scope}
The criteria below are continuation criteria for the strong solutions just
specified.  The assumptions are deliberately stronger than energy-level weak
formulations: the proof differentiates the equations up to order \(m\), uses
Moser calculus for smooth spectral functions of the conformation tensor, and
uses pointwise positive-cone and finite-extensibility barriers.  These operations
are not available for a general Leray-type weak solution without additional
renormalization, weak--strong uniqueness, or compactness machinery.  Therefore
no low-regularity existence or weak-solution regularity theorem is claimed here.

This restriction does not make the criteria vacuous.  The local continuation
principles below give the ambient local theory used throughout the paper: if the
initial data lie in \(H^m\), \(m\ge3\) in two dimensions, with the conformation
tensor in a compact positive-cone window, and in the FENE-P case also in a
compact finite-extensibility window, then a strong solution exists on a positive
time interval.  The continuation criteria are therefore statements about the
maximal lifespan in this nonempty strong class.  Failure of a critical \(H^1\)-
or energy-level local theory would only mean that the present theorem is not
formulated at that lower regularity; it would not affect the integer-Sobolev
restart criterion.

If a weak or energy solution is known by some other argument to coincide with a
strong positive-cone solution on an interval, then the present endpoint clock
prevents blow-up within that strong class; it does not by itself upgrade a weak
solution to a strong one.
\end{remark}

\begin{proposition}[Local continuation principle]
\label{prop:local-continuation}
Let \(s\in\mathbb N\), \(s\ge3\), \(u_0\in H^s\), \(\divv u_0=0\), and
\(A_0\in H^s(\T^2;\Spp^2)\) with spectral range contained in
\([m_0,M_0]\), \(0<m_0<M_0<\infty\).  Then there exists a unique strong
positive-cone solution on a time interval \([0,T_{\rm loc}]\).  Moreover, if a
solution on \([0,T_*)\) satisfies
\[
  \sup_{t<T_*}\left(
  \norm{u(t)}_{H^s}+\norm{A(t)}_{H^s}
  +\norm{A(t)}_{L^\infty}
  +\norm{A(t)^{-1}}_{L^\infty}
  \right)<\infty,
  \label{eq:local-continuation}
\]
then it extends as a strong positive-cone solution beyond \(T_*\).
\end{proposition}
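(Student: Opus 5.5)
The plan is the standard hyperbolic--parabolic local theory: a linearized iteration with a priori $H^s$ bounds, contraction in a lower norm, and a continuation argument.

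\textbf{Existence.} Set $B=A-\Id$ and build iterates. Given $(u^n,A^n)$, let $A^{n+1}$ solve the linear transport equation
\[
  \partial_tA^{n+1}+u^n\cdot\nabla A^{n+1}=\nabla u^nA^{n+1}+A^{n+1}(\nabla u^n)^T-\lambda^{-1}(A^{n+1}-\Id).
\]
Let $u^{n+1}$ solve the Leray-projected linear Stokes problem with convection field $u^n$ and forcing $\alpha\mathbb P\divv(A^n-\Id)$. The key steps are as follows.
\begin{itemize}
\item For the transport step, the Kato--Ponce commutator estimate
\[
\norm{[\partial^\beta,u\cdot\nabla]A}_{L^2}\lesssim\norm{\nabla u}_{L^\infty}\norm{A}_{H^s}+\norm{u}_{H^s}\norm{\nabla A}_{L^\infty}
\]
gives an $H^s$ bound. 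The term $\nabla u\,A$ costs $\norm{u}_{H^{s+1}}\norm{A}_{L^\infty}+\norm{\nabla u}_{L^\infty}\norm{A}_{H^s}$, using $H^s\subset W^{1,\infty}$ for $s\ge3$ in 2D.
\item For the velocity step, parabolic smoothing absorbs the single extra derivative: $\norm{\divv A}_{H^{s-1}}$ is paired against $\nu\norm{\nabla u}_{H^s}^2$.
\item These combine into a differential inequality for $E=\norm{u}_{H^s}^2+\norm{A}_{H^s}^2$ of the form $E'\le C(1+E)^{p}$ plus viscous absorption. This yields uniform bounds on a short interval $[0,T_{\rm loc}]$.
\end{itemize}
Positivity is preserved for the linear problem along characteristics. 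With $X$ the flow of $u^n$ and $\Phi=\nabla X$, the representation
\[
A(t,X)=e^{-t/\lambda}\Phi A_0\Phi^T+\lambda^{-1}\int e^{-(t-s)/\lambda}\Phi_{s,t}\Phi_{s,t}^T\,ds
\]
gives spectral bounds in terms of $\exp(2\int\norm{\nabla u}_{L^\infty})$. The window $[m_0,M_0]$ therefore degrades only slightly on a short interval, and this is what keeps $\norm{A^{-1}}_{L^\infty}$ finite.

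\textbf{Convergence and uniqueness.} Differences are contracted in $L^2\times L^2$ (velocity also in $L^2_tH^1$) using the uniform $H^s$ bounds; this gives a Cauchy sequence. Interpolation upgrades convergence to $H^{s'}$ for $s'<s$, and weak-$*$ limits keep the $H^s$ bound. Strong continuity $C([0,T];H^s)$ follows from the usual Bona--Smith regularization or time-reversibility-free argument: weak continuity plus continuity of the energy norm obtained from the equality form of the estimate. Uniqueness is the same $L^2$ difference estimate with Gronwall.

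\textbf{Continuation.} Suppose the quantity in \eqref{eq:local-continuation} is bounded by $M$. The local time $T_{\rm loc}$ from the existence step depends only on upper bounds for $\norm{u_0}_{H^s}$, $\norm{A_0}_{H^s}$ and the spectral range $[m_0,M_0]$. Restarting at $T_*-\delta$ with $\delta<T_{\rm loc}(M)$ then extends the solution past $T_*$. The solution stays in $\Spp^2$ by the spectral bound, and uniqueness glues the pieces.

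\textbf{Main obstacle.} The step needing care is the top-order coupling term $\alpha\int\partial^\beta\divv A\cdot\partial^\beta u$. Since $A$ has no smoothing, this must be handled by moving the divergence onto $u$ and using $\nu\norm{\nabla u}_{H^s}^2$, rather than by symmetric cancellation. Correspondingly, the $\nabla\partial^\beta u\,A$ term in the $A$-equation must be paired with the time-integrated $L^2_tH^{s+1}$ velocity bound. I would close this by Young's inequality, producing a bound of the form
\[
E(t)+\nu\int_0^t\norm{u}_{H^{s+1}}^2\le E(0)+C\int_0^t(1+E)^2,
\]
which suffices for a short time.
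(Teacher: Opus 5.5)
Your proposal is correct and follows essentially the same route as the paper. Both use an iteration/regularization scheme with $H^s$ bounds for $A$ and $H^s\cap L^2_tH^{s+1}$ bounds for $u$, with viscosity absorbing the one-derivative loss in both couplings. Both then propagate positivity along characteristics and restart the solution using a local time bounded below in terms of the controlled norms. Your explicit Duhamel representation along the flow is a more concrete version of the eigenvalue Gronwall argument in Lemma~\ref{lem:cone}.
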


\begin{proof}
Regularize the system, solve by Picard iteration, and obtain estimates in
\(H^s\) for \(A\) and in \(H^s\cap L^2_tH^{s+1}\) for \(u\).  Positivity is
propagated along characteristics by Lemma~\ref{lem:cone}.  The lower spectral
bound prevents degeneration of the cone, while the upper spectral and Sobolev
bounds control all nonlinear coefficients.

More explicitly, the regularized estimates give a local existence time
\[
  T_{\rm loc}\ge
  \tau\!\left(
  \norm{u_0}_{H^s}+\norm{A_0}_{H^s},
  \norm{A_0}_{L^\infty},
  \norm{A_0^{-1}}_{L^\infty}
  \right)>0,
\]
where \(\tau\) is nonincreasing in its arguments.  In particular, the
integer-Sobolev positive-cone class contains a full local solution class for
every datum satisfying the displayed bounds.  The same estimate applies
when the construction is restarted from any time \(t_0<T_*\).  Under
\eqref{eq:local-continuation}, these arguments remain bounded uniformly for
\(t_0<T_*\).  Hence there is a uniform \(\tau_*>0\) such that the solution
restarts on \([t_0,t_0+\tau_*]\) for every \(t_0<T_*\) sufficiently close to
\(T_*\).  Choosing \(t_0>T_*-\tau_*/2\) gives an extension beyond \(T_*\).
\end{proof}

\begin{remark}[Why logarithmic coordinates appear in the estimates]
The local principle is naturally stated in \(A\), but the compact-window
estimates are most transparent in \(B=\Log A\).  Uniform upper and lower
spectral bounds for \(A\) are equivalent to an \(L^\infty\) bound for \(B\).
On such spectral windows, the maps \(B\mapsto e^B\) and \(A\mapsto\Log A\) are
smooth on Sobolev spaces.  Therefore controlling \(B\) in \(H^s\) is equivalent
to controlling \(A\) in \(H^s\), with constants depending only on the spectral
window.
\end{remark}

\begin{proposition}[Local continuation in the FENE-P domain]
\label{prop:fene-local-continuation}
Let \(b>2\), \(s\in\mathbb N\), \(s\ge3\), \(u_0\in H^s\), \(\divv u_0=0\), and
\(C_0\in H^s(\T^2;\calD_b)\).  Assume that the range of \(C_0\) is contained
in a compact set \(K_0\Subset\calD_b\).  Then there exists a unique strong
FENE-P solution on a time interval \([0,T_{\rm loc}]\).  Moreover, if a
solution on \([0,T_*)\) satisfies
\[
  \sup_{t<T_*}\left(
  \norm{u(t)}_{H^s}+\norm{C(t)}_{H^s}
  \right)<\infty
\]
and the range of \(C(t,\cdot)\) remains in a fixed compact set
\(K\Subset\calD_b\) for all \(t<T_*\), then the solution extends beyond
\(T_*\) as a strong FENE-P solution.
\end{proposition}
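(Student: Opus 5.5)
We follow the proof of Proposition~\ref{prop:local-continuation}, with the positive-cone window replaced by a compact subdomain of \(\calD_b\). First fix a compact neighbourhood \(K_0\Subset K_1\Subset\calD_b\). On \(K_1\) the spring factor \(f_b(C)=(b-2)/(b-\tr C)\) and the stress \(T_b(C)=f_b(C)C-\Id\) are smooth, since the trace gap \(b-\tr C\) is bounded below there. All derivatives of these maps are bounded by constants \(C_{K_1}\).

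To construct the solution, extend \(f_b\) smoothly and boundedly to all of \(\Sym^2\) outside \(K_1\), mollify the transport term, and solve the regularized system by Picard iteration in \(H^s\times H^s\) with \(s\ge3\).

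\textbf{Uniform energy estimate.} The main step is an \(H^s\) energy estimate uniform in the regularization. Apply \(\partial^\beta\) with \(|\beta|\le s\), test the momentum equation with \(\partial^\beta u\), and test the conformation equation with \(\partial^\beta C\).
\begin{itemize}
\item The elastic force \(\alpha\langle\partial^\beta\divv T_b(C),\partial^\beta u\rangle\) is integrated by parts once. It is then bounded by \(\frac{\nu}{2}\norm{\nabla u}_{H^s}^2+C\norm{T_b(C)}_{H^s}^2\). The Moser estimate on \(K_1\) gives \(\norm{T_b(C)}_{H^s}\le C_{K_1}(1+\norm{C}_{H^s})\). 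So no derivative is lost here: the viscosity absorbs it.
\item The stretching terms are bounded by \(C\norm{\nabla u}_{H^s}\norm{C}_{L^\infty}\norm{C}_{H^s}+\norm{\nabla u}_{L^\infty}\norm{C}_{H^s}^2\). The top-order part is again absorbed by viscosity.
\item The transport commutator is handled by Kato--Ponce.
\end{itemize}
This yields \(Y'\le C_{K_1}(1+Y)^{2}\) with \(Y=\norm{u}_{H^s}^2+\norm{C}_{H^s}^2\), hence a time \(T_1\) depending only on \(\norm{u_0}_{H^s}+\norm{C_0}_{H^s}\) and \(K_1\).

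\textbf{Staying in the window.} Next, the range of \(C\) must stay in \(K_1\) on a short interval. This is the main obstacle, because the equation is not solved with a pointwise maximum principle in \(H^s\). We use the Sobolev embedding \(H^s\subset W^{1,\infty}\) and the equation to get
\[
  \norm{\partial_tC}_{L^\infty}\le C_{K_1}(1+Y)^{2}.
\]
This gives \(\norm{C(t)-C_0}_{L^\infty}\le t\,C_{K_1}(1+Y)^2\). So for \(t\le T_2\), with \(T_2\) depending on \(\operatorname{dist}(K_0,\partial K_1)\) and the data norms, the range lies in \(K_1\). On \(K_1\) the extension is inactive, so the regularized solutions solve the true FENE-P law. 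Positivity is also preserved along characteristics by Lemma~\ref{lem:cone}.

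\textbf{Limit and uniqueness.} Pass to the limit by compactness (Aubin--Lions), using the uniform bounds on \([0,\min(T_1,T_2)]\). For uniqueness, take the \(L^2\) difference of two solutions with the same window. The difference estimate uses Lipschitz bounds of \(T_b\) and \(f_b\) on \(K_1\), the viscous absorption of \(\divv(T_b(C_1)-T_b(C_2))\), and Gronwall.

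\textbf{Continuation.} The existence time depends only on \(\norm{u_0}_{H^s}+\norm{C_0}_{H^s}\) and on a compact set containing the initial range together with a fixed margin to \(\partial\calD_b\). Under the hypotheses, fix \(K\Subset K'\Subset\calD_b\). Restarting from any \(t_0<T_*\), the restart data have uniformly bounded \(H^s\) norm and range in \(K\). This gives a uniform lifespan \(\tau_*>0\). Choosing \(t_0>T_*-\tau_*/2\) extends the solution beyond \(T_*\), exactly as in the proof of Proposition~\ref{prop:local-continuation}.
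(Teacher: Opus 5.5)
Your proposal is correct and takes essentially the same route as the paper: regularize, solve by Picard iteration, get a lifespan bounded below in terms of \(\norm{u_0}_{H^s}+\norm{C_0}_{H^s}\) and the compact window, and restart with a uniform lifespan near \(T_*\). You also spell out steps the paper leaves implicit, namely viscous absorption of \(\divv T_b(C)\) at the local level, the short-time \(L^\infty\) argument keeping the range inside an enlarged window \(K_1\Subset\calD_b\) (so the cutoff extension of \(f_b\) is inactive), and the \(L^2\) difference estimate for uniqueness.
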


\begin{proof}
On \(K\Subset\calD_b\), the maps
\[
  C\mapsto f_b(C),\qquad C\mapsto T_b(C),\qquad
  C\mapsto f_b(C)C-\Id
\]
are smooth with bounded derivatives of every finite order.  The FENE-P system
therefore has the same quasilinear parabolic-transport structure as the
Oldroyd--B system, with composition constants depending only on \(K\), \(b\),
and the \(H^s\) size of \(C\).  Regularization and Picard iteration give a
local existence time bounded below by a nonincreasing function of
\[
  \norm{u_0}_{H^s}+\norm{C_0}_{H^s}
\]
and the compact-set constants of \(K_0\).  Thus the FENE-P strong class is
a genuine local solution class for compact finite-extensibility data, not an
additional a priori assumption.  Restarting the construction at
times \(t_0<T_*\) gives the same lower bound with \(K\) in place of \(K_0\).
The stated uniform bounds therefore give a uniform restart time and extend the
solution beyond \(T_*\).
\end{proof}

\section{Analytic Preliminaries}
\label{sec:preliminaries}

We collect the analytic estimates used below.  We use the standard
Littlewood--Paley and paraproduct framework of
\cite{BahouriCheminDanchin2011} and the commutator estimates of
\cite{KatoPonce1988}.  The endpoint transport estimate is written in dyadic form
because it is the only place where the critical clock \(B^0_{\infty,1}\)
interacts with a fractional Sobolev index.
The remaining high-order coefficient estimates are used at integer levels.

\begin{lemma}[Standard Moser and tame calculus]
\label{lem:moser}
\label{lem:tame-high-low}
Let \(r>1\) in two dimensions.  Then \(H^r(\T^2)\) is an algebra and
\[
  \norm{fg}_{H^r}
  \le C_r\left(\norm{f}_{L^\infty}\norm{g}_{H^r}
  +\norm{g}_{L^\infty}\norm{f}_{H^r}\right).
  \label{eq:moser-product}
\]
If \(F\in C^\infty(\R^N)\), \(F(0)=0\), and
\(\norm{v}_{L^\infty}\le K\), then
\[
  \norm{F(v)}_{H^r}\le C_{F,K,r}\norm{v}_{H^r}.
  \label{eq:moser-composition}
\]
The same estimates hold componentwise for matrix-valued functions.
Moreover, if \(m\ge2\), \(0<\eps<1\), and
\(f_1,\ldots,f_N\) be smooth scalar, vector, or matrix fields on \(\T^2\).  If
at most one factor is measured at order \(m\), then
\[
  \norm{\prod_{j=1}^N f_j}_{H^m}
  \le
  C\prod_{j=1}^N\left(1+\norm{f_j}_{H^{1+\eps}}\right)
  \sum_{j=1}^N\norm{f_j}_{H^m}.
  \label{eq:tame-high-low}
\]
The constant depends on \(m,\eps,N\) and on fixed \(L^\infty\) bounds for
smooth coefficient functions of the \(f_j\)'s.
\end{lemma}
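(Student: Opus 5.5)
The statement collects three estimates. The algebra/product bound \eqref{eq:moser-product} and the composition bound \eqref{eq:moser-composition} are classical, and I would cite them from \cite{BahouriCheminDanchin2011}. For \eqref{eq:moser-product} I would use Bony's decomposition
\[
  fg=T_fg+T_gf+R(f,g).
\]
The two paraproducts are bounded in \(H^r\) by \(\norm{f}_{L^\infty}\norm{g}_{H^r}\) and \(\norm{g}_{L^\infty}\norm{f}_{H^r}\), respectively. The remainder is bounded by \(\norm{f}_{L^\infty}\norm{g}_{H^r}\), which uses \(r>0\). The algebra property then follows from the embedding \(H^r\hookrightarrow L^\infty\) for \(r>1\) in two dimensions.

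For \eqref{eq:moser-composition} I would use Meyer's paralinearization formula, or the dyadic telescoping \(F(S_{j+1}v)-F(S_jv)=m_j\Delta_jv\) with \(m_j=\int_0^1F'(S_jv+\theta\Delta_jv)\,d\theta\). Derivatives of \(m_j\) are controlled in \(L^\infty\) by \(2^{jk}C_{F,K}\), so the standard Meyer lemma gives the bound. The hypothesis \(F(0)=0\) is what removes the constant term. The matrix-valued case follows componentwise.

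The real content is the tame bound \eqref{eq:tame-high-low}. Since \(m\) is an integer, I would prove it by Leibniz expansion instead of paraproducts. For \(|\beta|\le m\), the derivative \(\partial^\beta\prod_j f_j\) is a sum of terms \(\prod_j\partial^{\beta_j}f_j\) with \(\sum_j|\beta_j|=|\beta|\). Fix one such term and let \(k_j=|\beta_j|\). The factors with \(k_j=0\) are bounded in \(L^\infty\), and here \(H^{1+\eps}\hookrightarrow L^\infty\) absorbs them into \(\prod_j(1+\norm{f_j}_{H^{1+\eps}})\). When the \(f_j\) are themselves smooth functions of an underlying field, this is where the fixed \(L^\infty\) coefficient bounds enter.

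For the differentiated factors I would use Gagliardo--Nirenberg interpolation between \(H^{1+\eps}\) and \(H^m\). In two dimensions,
\[
  \norm{\partial^{k}f}_{L^{p}}\lesssim
  \norm{f}_{H^{1+\eps}}^{1-\theta}\norm{f}_{H^m}^{\theta},
  \qquad \theta=\frac{k-1-\eps+2/p-1+\dots}{m-1-\eps},
\]
where the exponent is fixed by scaling. I would choose the exponents \(p_j\) by Hölder so that \(\sum_j 1/p_j=1/2\) and \(\sum_j\theta_j=1\). This is possible because the scaling counts balance: \(\sum_j k_j=m\), and each interpolant gains \(1+\eps-1=\eps>0\) of room, which handles the borderline \(L^\infty\) endpoint. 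Young's inequality then gives
\[
  \prod_j\norm{f_j}_{H^m}^{\theta_j}\le\sum_j\norm{f_j}_{H^m},
\]
while the \(H^{1+\eps}\) powers are bounded by \(\prod_j(1+\norm{f_j}_{H^{1+\eps}})\).

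The main obstacle is the exponent bookkeeping, especially for terms where one factor carries all \(m\) derivatives or where a factor carries exactly one derivative. The first case is the "at most one factor at order \(m\)" situation: put that factor in \(L^2\) and the rest in \(L^\infty\), which is trivial. The second case is harder, since \(\nabla f\notin L^\infty\) from \(H^{1+\eps}\) alone. For it I would place \(\nabla f\) in \(L^{2/(1-\eps)}\subset H^{\eps}\), and put the partner factor in the dual Sobolev space obtained by interpolation, which needs \(m\ge2\). If the general Hölder scheme gets messy, the fallback is to argue by induction on \(N\), using the two-factor tame estimate \(\norm{fg}_{H^m}\lesssim\norm{f}_{L^\infty}\norm{g}_{H^m}+\norm{g}_{L^\infty}\norm{f}_{H^m}\), which is \eqref{eq:moser-product} with \(r=m\), together with \(L^\infty\lesssim H^{1+\eps}\).
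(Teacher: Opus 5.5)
Your proposal is correct. For the product estimate it is the paper's argument: Bony's decomposition, \(r>0\) for the remainder, and \(H^r\hookrightarrow L^\infty\) for \(r>1\). For the other two parts you take a different and more complete route.

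For the composition bound, the paper only displays \(F(v)=\int_0^1DF(\theta v)v\,d\theta\). This rewrites \(F(v)\) as a product \(G(v)v\). Closing that with the product estimate still requires an \(H^r\) bound on \(G(v)-G(0)\), which is again a composition estimate. Your Meyer telescoping \(F(S_{j+1}v)-F(S_jv)=m_j\Delta_jv\) is what actually handles non-integer \(r>1\).

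For the tame bound, the paper says to put the highest derivative on one factor and the others in \(H^{1+\eps}\hookrightarrow L^\infty\). That is a paraproduct heuristic. Read as a Leibniz expansion, it covers only your ``trivial'' case and not terms such as \(\nabla f_1\,\nabla^{m-1}f_2\), which you rightly flag. Your fallback settles everything at once. Iterating the two-factor bound with \(r=m\) gives \(\norm{\prod_jf_j}_{H^m}\le C_N\sum_j\bigl(\prod_{i\ne j}\norm{f_i}_{L^\infty}\bigr)\norm{f_j}_{H^m}\). Then \(\norm{f_i}_{L^\infty}\lesssim\norm{f_i}_{H^{1+\eps}}\) finishes the proof, with a constant depending only on \(m,\eps,N\). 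I would make this the main argument.

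The Gagliardo--Nirenberg route also works, but its bookkeeping needs repair; your displayed exponent is incomplete.
\begin{itemize}
\item The exponent is \(\theta=(k-\eps-2/p)/(m-1-\eps)\), coming from \(H^{k+1-2/p}\hookrightarrow W^{k,p}\). Take \(\theta=0\) when \(k+1-2/p\le1+\eps\).
\item With a suitable choice of the \(p_j\), the \(\theta_j\) sum to at most \(1\), not exactly \(1\). You then raise them using \(\norm{f}_{H^{1+\eps}}\le\norm{f}_{H^m}\) before applying Young's inequality.
\item In the one-derivative case the embedding is \(H^{\eps}\hookrightarrow L^{2/(1-\eps)}\), not the reverse inclusion you wrote.
\end{itemize}
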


\begin{proof}
These are standard consequences of the Bony paraproduct decomposition
\(fg=T_f g+T_g f+R(f,g)\).  Since \(r>1=d/2\), \(H^r(\T^2)\) is an algebra.
The composition bound follows by writing
\[
  F(v)=\int_0^1 DF(\theta v)v\,d\theta .
\]
For \eqref{eq:tame-high-low}, put the highest derivative on one factor and
place all remaining factors in \(H^{1+\eps}\hookrightarrow L^\infty\).
\end{proof}

\begin{lemma}[Endpoint Besov transport commutator]
\label{lem:endpoint-transport}
Let \((\Delta_j)_{j\ge -1}\) be a nonhomogeneous Littlewood--Paley partition
on \(\T^2\).  Let \(\sigma>0\), \(\divv u=0\), and let \(f\) be scalar,
vector, or matrix valued.  Then
\[
  \left|
  \sum_{j\ge -1}2^{2\sigma j}
  \int_{\T^2}\Delta_j f:
  [\Delta_j,u\cdot\nabla]f\,dx
  \right|
  \le
  C_\sigma\norm{\nabla u}_{B^0_{\infty,1}}\norm{f}_{H^\sigma}^2.
  \label{eq:endpoint-transport-dyadic}
\]
Consequently, for every integer \(m\ge1\),
\[
  \left|
  \sum_{|\gamma|\le m}
  \int_{\T^2}\partial^\gamma f:
  [\partial^\gamma,u\cdot\nabla]f\,dx
  \right|
  \le
  C_m\norm{\nabla u}_{B^0_{\infty,1}}\norm{f}_{H^m}^2.
  \label{eq:endpoint-transport}
\]
\end{lemma}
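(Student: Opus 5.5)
We first prove the dyadic estimate \eqref{eq:endpoint-transport-dyadic} and then deduce the integer-order estimate \eqref{eq:endpoint-transport}. For the dyadic estimate we use Bony's decomposition of the commutator. Since \(\divv u=0\), write \(u\cdot\nabla f=\sum_k u^k\partial_k f\) and split
\[
  [\Delta_j,u\cdot\nabla]f
  =[\Delta_j,T_{u^k}]\partial_k f
  +\Delta_j\bigl(T_{\partial_k f}u^k\bigr)-T_{\Delta_j\partial_k f}u^k
  +\Delta_j R(u^k,\partial_k f)-R(u^k,\Delta_j\partial_k f),
\]
plus the trivial terms coming from the difference \(u^k\Delta_j\partial_k f-T_{u^k}\Delta_j\partial_k f\). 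Each piece is then estimated in \(L^2\) by \(c_j 2^{-\sigma j}\norm{\nabla u}_{B^0_{\infty,1}}\norm{f}_{H^\sigma}\), with \((c_j)\in\ell^2\). Pairing with \(2^{2\sigma j}\Delta_j f\) and applying Cauchy--Schwarz in \(j\) then gives the claim.

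\textbf{Paraproduct commutator.} In the term \([\Delta_j,T_{u^k}]\partial_k f\) only \(|j'-j|\le4\) contribute. The standard first-order Taylor formula for the kernel of \(\Delta_j\) gives
\[
  \norm{[\Delta_j,S_{j'-1}u^k]\partial_k\Delta_{j'}f}_{L^2}
  \lesssim 2^{-j}\norm{\nabla S_{j'-1}u}_{L^\infty}2^{j'}\norm{\Delta_{j'}f}_{L^2}.
\]
Here \(\norm{\nabla S_{j'-1}u}_{L^\infty}\le\sum_{q}\norm{\Delta_q\nabla u}_{L^\infty}=\norm{\nabla u}_{B^0_{\infty,1}}\). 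This is exactly where the endpoint space replaces \(L^\infty\) of \(\nabla u\), which would not bound \(S_{j'}\nabla u\) uniformly.

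\textbf{Terms with \(f\) at low frequency.} In \(T_{\partial_k f}u\), frequency \(j\) of the output carries \(\Delta_{j'}u\) with \(j'\sim j\). We bound
\[
  \norm{S_{j'-1}\partial_kf}_{L^2}\,\norm{\Delta_{j'}u}_{L^\infty}
  \lesssim\Bigl(\sum_{q<j'}2^{q}\norm{\Delta_qf}_{L^2}\Bigr)2^{-j'}\norm{\Delta_{j'}\nabla u}_{L^\infty}.
\]
Multiplying by \(2^{\sigma j}\), the factor \(2^{(\sigma-1)j}\sum_{q<j}2^{q}\norm{\Delta_qf}_{L^2}\) is a convolution of \(2^{\sigma q}\norm{\Delta_q f}_{L^2}\in\ell^2\) with the sequence \(2^{(\sigma-1)(j-q)}\mathbf 1_{q<j}\). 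This sequence is summable only if \(\sigma<1\). For \(\sigma\ge1\), we instead put \(\nabla f\) in \(L^\infty\)-free form by estimating \(S_{j-1}\partial_k f\) in \(L^2\) and \(\Delta_j u\) in \(L^\infty\). This is the main obstacle: for large \(\sigma\) this term is not of commutator type. The fix I would try first is to note that \(T_{\partial_k f}u^k\) is only needed paired against \(\Delta_jf\), and to use \(\norm{\Delta_{j'} u}_{L^\infty}\lesssim 2^{-j'}\norm{\Delta_{j'}\nabla u}_{L^\infty}\) together with the \(\ell^1\) summability of \(\norm{\Delta_{j'}\nabla u}_{L^\infty}\). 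This moves the square-summability onto \(f\) at frequency \(\le j\), with weight \(2^{\sigma(j-q)}\) against the \(\ell^1\) sequence in \(j'\). If this fails for large \(\sigma\), I would accept the standard loss and observe that the bound \(\norm{\nabla u}_{L^\infty}\norm{f}_{H^\sigma}+\norm{\nabla u}_{H^{\sigma-1}}\norm{\nabla f}_{L^\infty}\) is not what is claimed; so the intended reading must be that the \(\ell^1\)-in-frequency control of \(\nabla u\) absorbs the sum.

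\textbf{Remainder terms.} The terms \(R(u,\partial f)\) are handled through \(\divv u=0\), writing \(R(u^k,\partial_kf)=\partial_kR(u^k,f)\). Both are summed over \(j'\gtrsim j\), with the gain \(2^{\sigma(j-j')}\), which is summable because \(\sigma>0\).

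\textbf{Integer-order estimate.} To deduce \eqref{eq:endpoint-transport}, note that \(\sum_{|\gamma|\le m}\norm{\partial^\gamma f}^2_{L^2}\) is equivalent to \(\sum_j 2^{2mj}\norm{\Delta_jf}^2_{L^2}\). One can either repeat the above argument with \(\partial^\gamma\) in place of \(\Delta_j\), or use the Leibniz expansion
\[
  [\partial^\gamma,u\cdot\nabla]f=\sum_{0<\beta\le\gamma}\binom{\gamma}{\beta}\partial^\beta u\cdot\nabla\partial^{\gamma-\beta}f.
\]
The \(|\beta|=1\) term is bounded by \(\norm{\nabla u}_{L^\infty}\le\norm{\nabla u}_{B^0_{\infty,1}}\) times \(\norm{f}_{H^m}\). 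The intermediate terms are paraproduct-decomposed as above, with the low-frequency factor \(S\nabla u\) controlled by the Besov norm. The top term \(\partial^\gamma u\cdot\nabla f\), with \(|\beta|=m\), is treated as \(T_{\nabla f}\partial^{\gamma}u\), and its pairing with \(\partial^\gamma f\) is handled exactly like the low-frequency-\(f\) term of the dyadic estimate.
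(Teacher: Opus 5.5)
\textbf{There is a genuine gap here, and it cannot be closed: the estimate is false for every \(\sigma>1\), and the integer form is false for every \(m\ge2\).}

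Your Bony decomposition follows the same route as the paper, and you located the right term. The piece with \(u\) at high frequency and \(f\) at low frequency is controlled only by
\[
  2^{\sigma j}\norm{\Delta_j\bigl(S_{j-1}\nabla f\cdot\Delta_j u\bigr)}_{L^2}
  \lesssim b_j\sum_{q<j}2^{(\sigma-1)(j-q)}\,2^{\sigma q}\norm{\Delta_q f}_{L^2},
  \qquad b_j=\norm{\Delta_j\nabla u}_{L^\infty}.
\]
For \(\sigma>1\) the weight \(2^{(\sigma-1)(j-q)}\) grows as \(q\) drops below \(j\). Your proposed fix carries the same growing weight (you write \(2^{\sigma(j-q)}\)). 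Summability of \((b_j)\) in \(\ell^1\) cannot compensate growth in the \(f\)-index. The sentence ``the intended reading must be that the \(\ell^1\)-in-frequency control of \(\nabla u\) absorbs the sum'' is exactly where the argument breaks.

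This is not just a lossy bound. Take \(u=N^{-1}(\sin Nx_2,0)\) and \(f=\cos x_1+N^{-\sigma}\sin x_1\sin Nx_2\) on \(\T^2\). Then \(\divv u=0\), \(\norm{\nabla u}_{B^0_{\infty,1}}\le C\) and \(\norm{f}_{H^\sigma}\le C\). Since \(\int\Delta_jf\,u\cdot\nabla\Delta_jf\,dx=0\), the left side equals \(\sum_j2^{2\sigma j}\int\Delta_jf\,\Delta_j(u\cdot\nabla f)\,dx\), where
\[
  u\cdot\nabla f=-N^{-1}\sin x_1\sin Nx_2+N^{-1-\sigma}\cos x_1\sin^2 Nx_2 .
\]
Let \(\varphi_j\) be the symbol of \(\Delta_j\) and \(w(\xi)=\sum_j2^{2\sigma j}\varphi_j(\xi)^2\simeq\langle\xi\rangle^{2\sigma}\), which is radial. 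Comparing Fourier supports, the sum equals \(\pi^2N^{-1-\sigma}\bigl(w(1,0)-w(1,N)\bigr)\le -cN^{\sigma-1}+CN^{-1-\sigma}\). This is unbounded for \(\sigma>1\).

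The same pair with \(\sigma=m\) and symbol \(\sum_{|\gamma|\le m}\xi^{2\gamma}\) gives a value \(\le -\pi^2N^{m-1}+O(N^{-1-m})\) in the integer form. There the culprit is the top Leibniz term \(\partial^\gamma u\cdot\nabla f\), which your last paragraph proposes to handle ``like the low-frequency-\(f\) term''.

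The paper's proof has the same defect. It asserts a dyadic bound with decay \(2^{-c_\sigma|j-k|}\) for \(R^2_j\) (high-frequency velocity acting on low-frequency \(f\)), and that bound would imply the inequality just refuted.

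What your argument does establish is the following:
\begin{itemize}
\item The range \(0<\sigma\le1\). At \(\sigma=1\) use \(\norm{S_{j-1}\nabla f}_{L^2}\le C\norm{f}_{H^1}\) and \(b\in\ell^1\subset\ell^2\). This is the one term where summability of \((b_j)\), rather than \(\norm{\nabla u}_{L^\infty}\), is actually used.
\item The integer case \(m=1\), which is immediate from \([\partial_i,u\cdot\nabla]f=\partial_iu\cdot\nabla f\).
\end{itemize}

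For \(\sigma>1\) a correct statement needs an extra term. One option is the classical \(\norm{\nabla f}_{L^\infty}\norm{\nabla u}_{H^{\sigma-1}}\norm{f}_{H^\sigma}\). Another is \(\norm{f}_{H^{1+\eps}}\norm{u}_{H^{\sigma+1}}\norm{f}_{H^\sigma}\), as in the weighted bound of Lemma~\ref{lem:physical-commutator}. This term is harmless when \(f=u\); otherwise it has to be absorbed by viscosity.

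A smaller point: the paraproduct commutator \([\Delta_j,T_{u^k}]\partial_kf\) does not need the Besov norm. \(S_{j'-1}\) is uniformly bounded on \(L^\infty\), so \(\norm{\nabla S_{j'-1}u}_{L^\infty}\le C\norm{\nabla u}_{L^\infty}\), contrary to your remark.
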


\begin{proof}
The proof is included to make the endpoint use of \(B^0_{\infty,1}\) explicit.
Set \(b_k=\norm{\Delta_k\nabla u}_{L^\infty}\), so that
\(\sum_{k\ge-1}b_k\simeq\norm{\nabla u}_{B^0_{\infty,1}}\).  Bony's
splitting of \(u\cdot\nabla f\) gives
\[
  [\Delta_j,u\cdot\nabla]f
  =R_j^1+R_j^2+R_j^3,
\]
where, up to harmless finite changes of the index ranges,
\[
  R_j^1=\sum_{|k-j|\le4}
  [\Delta_j,S_{k-1}u\cdot\nabla]\Delta_k f,
\]
\(R_j^2\) contains the high-frequency velocity acting on the low-frequency
part of \(f\), and \(R_j^3\) contains the high-high remainder.  The kernel
representation of \(\Delta_j\) gives
\[
  \norm{R_j^1}_{L^2}
  \le C\sum_{|k-j|\le4}
  \norm{\nabla S_{k-1}u}_{L^\infty}\norm{\Delta_k f}_{L^2}
  \le C\norm{\nabla u}_{B^0_{\infty,1}}
  \sum_{|k-j|\le4}\norm{\Delta_k f}_{L^2}.
\]
For the two remaining terms, the derivative on the high-frequency velocity is
recorded in the summable sequence \(b_k\), and the dyadic interactions are
localized by \(\ell^1\) kernels.  More precisely,
\[
  2^{\sigma j}\norm{R_j^2+R_j^3}_{L^2}
  \le C_\sigma
  \sum_{k\ge-1}2^{-c_\sigma|j-k|}
  b_k\,2^{\sigma k}\norm{\Delta_k f}_{L^2},
  \label{eq:lp-commutator-kernel}
\]
for some \(c_\sigma>0\).  The estimate follows by Young's inequality for
sequences, using \(b\in\ell^1\).  Equivalently, there is a sequence
\(c_j\in\ell^2\), \(\norm{c}_{\ell^2}\le1\), such that
\[
  2^{\sigma j}\norm{[\Delta_j,u\cdot\nabla]f}_{L^2}
  \le C_\sigma c_j
  \norm{\nabla u}_{B^0_{\infty,1}}\norm{f}_{H^\sigma}.
\]
Multiplying by \(2^{\sigma j}\norm{\Delta_j f}_{L^2}\) and summing in
\(j\) gives \eqref{eq:endpoint-transport-dyadic}.  The divergence-free
condition has removed the principal transport contribution
\(\int u\cdot\nabla |\Delta_j f|^2/2=0\).  The integer derivative form
\eqref{eq:endpoint-transport} follows from the equivalence between the
nonhomogeneous dyadic \(H^m\) norm and the classical derivative norm.
\end{proof}

\begin{remark}[Scope of fractional estimates]
\label{rem:fractional-scope}
Lemma~\ref{lem:endpoint-transport} is the fractional Littlewood--Paley input
used in the paper.  The high-order continuation estimates themselves are
proved for integer \(s=m\ge3\), where all coefficient commutators are
expanded by ordinary derivatives.  Thus the proof does not rely on an unstated
fractional high-order commutator calculus for the full nonlinear system, nor on
a low-regularity passage below the strong-solution class of
Remark~\ref{rem:strong-solution-scope}.
\end{remark}

\section{Entropy and Positive-Cone Geometry}
\label{sec:entropy}

\begin{definition}[Free energy]
For a smooth positive-cone solution define
\[
  \calE(t)
  =
  \frac12\int_{\T^2}|u|^2\dd
  +
  \frac{\alpha}{2}\int_{\T^2}
  \left(\tr A-\log\det A-2\right)\dd .
\]
\end{definition}

\begin{proposition}[Entropy identity]
\label{prop:entropy}
Every smooth solution of \eqref{eq:ob-momentum}--\eqref{eq:ob-conformation}
satisfies
\[
\begin{aligned}
  \frac{d}{dt}\calE(t)
  &+\nu\int_{\T^2}|\nabla u|^2\dd
  +\frac{\alpha}{2\lambda}
  \int_{\T^2}\tr(A+A^{-1}-2\Id)\dd=0 .
\end{aligned}
\label{eq:entropy}
\]
\end{proposition}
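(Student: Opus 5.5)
The plan is to derive the identity by a direct computation: add the kinetic energy balance and the time derivative of the relative entropy density \(\tr A-\log\det A-2\), and check that the elastic work terms cancel exactly.

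First, take the \(L^2\) inner product of \eqref{eq:ob-momentum} with \(u\). Since \(\divv u=0\) on \(\T^2\), the transport term \(\int (u\cdot\nabla u)\cdot u\dd\) and the pressure term vanish. The viscous term gives \(\nu\int|\nabla u|^2\dd\). Integrating by parts, the elastic force contributes \(-\alpha\int (A-\Id):\nabla u\dd=-\alpha\int A:\nabla u\dd\), because \(\int \Id:\nabla u\dd=\int\divv u\dd=0\). Thus
\[
  \frac{d}{dt}\frac12\int|u|^2\dd+\nu\int|\nabla u|^2\dd=-\alpha\int A:\nabla u\dd .
\]

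Next, differentiate the entropy density along the flow, writing \(D_t=\partial_t+u\cdot\nabla\). Take the trace of \eqref{eq:ob-conformation}. Since \(\tr(\nabla u\,A+A(\nabla u)^T)=2A:\nabla u\) (using the symmetry of \(A\)), this gives
\[
  D_t\tr A=2A:\nabla u-\lambda^{-1}\tr(A-\Id).
\]
For the log-determinant, Jacobi's formula \(D_t\log\det A=\tr(A^{-1}D_tA)\) applies because \(A\) is positive definite; this is where the positive-cone property enters, ensuring \(A^{-1}\) is smooth. It yields
\[
  \tr\bigl(A^{-1}\nabla u\,A+(\nabla u)^T\bigr)-\lambda^{-1}\tr(\Id-A^{-1})=2\divv u-\lambda^{-1}\tr(\Id-A^{-1}),
\]
by cyclicity of the trace, and \(\divv u=0\). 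Subtracting the two equations gives
\[
  D_t(\tr A-\log\det A-2)=2A:\nabla u-\lambda^{-1}\tr(A+A^{-1}-2\Id).
\]
Integrate over \(\T^2\): the transport part drops out because \(u\) is divergence-free. Multiplying by \(\alpha/2\) produces the elastic contribution \(+\alpha\int A:\nabla u\dd\) together with the dissipation \(-\frac{\alpha}{2\lambda}\int\tr(A+A^{-1}-2\Id)\dd\).

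Adding the two balances cancels the work term \(\alpha\int A:\nabla u\dd\) exactly and gives \eqref{eq:entropy}. There is no serious obstacle here. The only point needing care is the bookkeeping in the cancellation: the factor \(2\) from the two stretching terms must match the factor \(\frac{\alpha}{2}\) in the energy, and the isotropic part \(-\Id\) of the stress must be removed via incompressibility. For strong solutions with \(s\ge3\), all integrations by parts and the time differentiation under the integral are justified by the regularity in the definition of strong positive-cone solutions.
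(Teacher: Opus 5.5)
Your proof is correct and follows essentially the same route as the paper: you test the momentum equation with \(u\), then compute the evolution of \(\tr A-\log\det A-2\) via the trace and Jacobi's formula. That computation is exactly the paper's step of testing the conformation equation with \(\Id-A^{-1}\), and the elastic work terms \(\pm\alpha\int A:\nabla u\dd\) cancel after multiplication by \(\alpha/2\).
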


\begin{proof}
Testing the momentum equation by \(u\) gives
\[
  \frac12\frac{d}{dt}\int |u|^2\dd
  +\nu\int|\nabla u|^2\dd
  =
  -\alpha\int (A-\Id):\nabla u\dd .
\]
Testing the conformation equation by \(\Id-A^{-1}\) gives
\[
  \frac{d}{dt}\int(\tr A-\log\det A-2)\dd
  =
  2\int(A-\Id):\nabla u\dd
  -\lambda^{-1}\int\tr(A+A^{-1}-2\Id)\dd .
\]
The stretching terms cancel after multiplication by \(\alpha/2\).
\end{proof}

\begin{lemma}[Cone propagation]
\label{lem:cone}
If \(A_0(x)\in\Spp^2\) and
\(\int_0^T\norm{\nabla u(t)}_{L^\infty}\,dt<\infty\), then
\(A(t,x)\in\Spp^2\) for all \(t\le T\).  Moreover, along characteristics,
the extreme eigenvalues obey
\[
  \lambda_{\max}A(t)
  \le C\left(\lambda_{\max}A_0+1\right)
  \exp\left(2\int_0^t\norm{\nabla u(s)}_{L^\infty}\,ds\right),
\]
and an analogous lower bound holds for \(\lambda_{\min}A(t)\).
\end{lemma}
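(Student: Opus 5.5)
The plan is to work along Lagrangian characteristics and reduce everything to scalar comparison inequalities for the extreme eigenvalues. Let \(X(t,x)\) be the flow map of \(u\). It is well defined and bi-Lipschitz because \(\int_0^T\norm{\nabla u}_{L^\infty}\,dt<\infty\). Set \(\tilde A(t)=A(t,X(t,x))\) for fixed \(x\). The conformation equation \eqref{eq:ob-conformation} becomes the matrix ODE
\[
  \dot{\tilde A}=L\tilde A+\tilde AL^T-\lambda^{-1}(\tilde A-\Id),\qquad L(t)=\nabla u(t,X(t,x)),
\]
which is linear with the inhomogeneity \(\lambda^{-1}\Id\).

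\textbf{Positivity.} I will prove positivity through the Duhamel representation. Let \(\Phi\) solve \(\dot\Phi=L\Phi\) with \(\Phi(0)=\Id\); then \(\Phi\) is invertible and \(\norm{\Phi(t)},\norm{\Phi(t)^{-1}}\le\exp(\int_0^t\norm{\nabla u}_{L^\infty})\). Variation of constants gives
\[
  \tilde A(t)=e^{-t/\lambda}\Phi(t)A_0\Phi(t)^T+\lambda^{-1}\int_0^te^{-(t-s)/\lambda}\Phi(t)\Phi(s)^{-1}\Phi(s)^{-T}\Phi(t)^T\,ds .
\]
This is easy to check by differentiating. Both terms are symmetric. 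The first is positive definite because it is a congruence of \(A_0\in\Spp^2\), and the second is positive semidefinite. Hence \(\tilde A(t)\in\Spp^2\).

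\textbf{Upper bound.} For a unit vector \(\xi\), \(\xi^T(LA+AL^T)\xi\le2\norm{L}\lambda_{\max}(A)\). Applying this at a maximizing eigenvector, and using Danskin's lemma (or the Duhamel formula directly), gives the differential inequality
\[
  \frac{d}{dt}\lambda_{\max}\le 2\norm{\nabla u}_{L^\infty}\lambda_{\max}+\lambda^{-1}
\]
in the upper Dini-derivative sense. Gronwall then yields \(\lambda_{\max}A(t)\le(\lambda_{\max}A_0+t/\lambda)\exp(2\int_0^t\norm{\nabla u}_{L^\infty})\), which is the stated bound with \(C\) depending on \(T\) and \(\lambda\). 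Alternatively, the bound can be read off the Duhamel formula using \(\norm{\Phi(t)\Phi(s)^{-1}}\le\exp\int_s^t\norm{\nabla u}_{L^\infty}\).

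\textbf{Lower bound.} This is the only step needing care: the naive eigenvalue inequality is fine, but a rigorous derivative of \(\lambda_{\min}\) is needed at crossings. I will avoid the issue by using the Duhamel formula and dropping the nonnegative integral term. This gives
\[
  \lambda_{\min}\tilde A(t)\ge e^{-t/\lambda}\lambda_{\min}(A_0)\,\norm{\Phi(t)^{-1}}^{-2}\ge e^{-t/\lambda}\lambda_{\min}(A_0)\exp\left(-2\int_0^t\norm{\nabla u}_{L^\infty}\right),
\]
which is the analogous lower bound.

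Taking the supremum and infimum over \(x\) completes the argument. Since \(X(t,\cdot)\) is a bijection, these pointwise bounds along characteristics are bounds on all of \(\T^2\).
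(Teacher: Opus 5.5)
Your proof is correct. For the upper bound it is the paper's argument: test the trajectory ODE against a maximizing unit vector, then apply Gronwall.

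You depart from the paper in the lower bound and positivity. The paper applies the same unit-vector/Gronwall argument to $A^{-1}$, whose equation $\frac{d}{dt}A^{-1}=-(\nabla u)^TA^{-1}-A^{-1}\nabla u+\lambda^{-1}(A^{-1}-A^{-2})$ comes from differentiating $A^{-1}A=\Id$. You instead use the variation-of-constants formula built from the fundamental matrix $\Phi$. Note that $\Phi$ is exactly the deformation gradient $\nabla_x X$ along the characteristic, so your homogeneous term is the classical push-forward $e^{-t/\lambda}\Phi A_0\Phi^T$.

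Your route buys several things:
\begin{itemize}
\item Positivity is immediate: a congruence of $A_0$ plus a positive semidefinite integral. The $A^{-1}$ route needs an implicit continuity argument, because the equation for $A^{-1}$ only makes sense while $A$ is still invertible, and the paper does not spell that out.
\item No eigenvalue-crossing or Dini-derivative issues arise.
\item Reading the upper bound directly off the Duhamel formula, with $\norm{\Phi(t)\Phi(s)^{-1}}\le\exp\int_s^t\norm{\nabla u}_{L^\infty}$ and $\lambda^{-1}\int_0^te^{-(t-s)/\lambda}\,ds\le1$, gives the stated bound with $C=1$, uniformly in $T$ and $\lambda$. Your Gronwall version gives the weaker $C=\max(1,T/\lambda)$.
\end{itemize}

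The paper's differential route uses only the quadratic-form structure of upper-convected stretching. It therefore adapts to relaxation laws that are not affine in the conformation tensor, where no Duhamel formula exists. For FENE-P your formula still works: along a trajectory the relaxation is affine with scalar coefficient $f_b(C)$, so $e^{-t/\lambda}$ is replaced by $e^{-\lambda^{-1}\int f_b}$.

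One small remark: your concern about $\lambda_{\min}$ at crossings is no worse than for $\lambda_{\max}$. Since $\lambda_{\min}(A)=\min_{|\xi|=1}\xi^TA\xi$ is concave, Danskin's lemma applies symmetrically.
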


\begin{proof}
Along a trajectory \(X(t)\), the matrix satisfies
\[
  \frac{d}{dt}A=\nabla u\,A+A(\nabla u)^T-\lambda^{-1}(A-\Id).
\]
Testing against unit vectors and applying Gronwall gives the upper spectral
bound.  Applying the same argument to \(A^{-1}\), whose equation is obtained by
differentiating \(A^{-1}A=\Id\), gives the lower bound.
\end{proof}

\section{Logarithmic Coordinates and Good Unknowns}
\label{sec:log}

Let \(B=\Log A\).  The derivative of the matrix exponential is
\[
  d\exp_B(C)=\int_0^1 e^{(1-\theta)B}Ce^{\theta B}\,d\theta .
\]
If the spectrum of \(B\) is contained in \([-K,K]\), then
\[
  c_K|C|\le |d\exp_B(C)|\le C_K|C|
  \qquad (C=C^T),
  \label{eq:dexp-elliptic}
\]
and the constants depend only on \(K\).  Thus the logarithmic coordinate is
not merely a change of notation: it gives an elliptic chart on every compact
spectral subset of the positive cone.

\begin{lemma}[Sobolev calculus for the exponential chart]
\label{lem:exp-calculus}
Let \(s>1\) in two dimensions and suppose
\(\norm{B}_{L^\infty}\le K\).  Then
\[
  \norm{e^B-\Id}_{H^s}
  \le C_K\left(1+\norm{B}_{H^s}\right),
  \label{eq:exp-Hs}
\]
and, for two symmetric fields \(B_1,B_2\) with the same \(L^\infty\) bound,
\[
  \norm{e^{B_1}-e^{B_2}}_{H^{s-1}}
  \le
  C_K\left(1+\norm{B_1}_{H^s}+\norm{B_2}_{H^s}\right)
  \norm{B_1-B_2}_{H^{s-1}} .
  \label{eq:exp-lip}
\]
The same estimates hold for \(B\mapsto(d\exp_B)^{-1}\) on symmetric matrices.
\end{lemma}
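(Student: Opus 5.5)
The estimates will follow from the Moser composition bound of Lemma~\ref{lem:moser}, once the exponential map is written as a smooth function of the entries of \(B\) that is evaluated only on the compact set \(\{|B|\le K\}\). First, on symmetric \(2\times2\) matrices with spectrum in \([-K,K]\), the map \(F(B)=e^B-\Id\) is a polynomial-type entire function of the entries with \(F(0)=0\). We may modify \(F\) outside \(\{|B|\le 2K\}\) to obtain a function in \(C^\infty_c\)-type class with all derivatives bounded, without changing its values on the range of \(B\). Then \eqref{eq:moser-composition} gives directly
\[
  \norm{e^B-\Id}_{H^s}\le C_{K,s}\norm{B}_{H^s}\le C_K\bigl(1+\norm{B}_{H^s}\bigr),
\]
which is \eqref{eq:exp-Hs}. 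For \(s\) non-integer this uses the paraproduct version of the composition estimate, which Lemma~\ref{lem:moser} states for all \(r>1\).

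For the Lipschitz bound \eqref{eq:exp-lip}, we write
\[
  e^{B_1}-e^{B_2}=\int_0^1 d\exp_{B_\theta}(B_1-B_2)\,d\theta,\qquad B_\theta=\theta B_1+(1-\theta)B_2 .
\]
Each \(B_\theta\) has the same \(L^\infty\) bound \(K\), since the spectral norm is convex. The integrand is bilinear-type: it equals \(M(B_\theta)[D]\) with \(D=B_1-B_2\) and \(M\) a smooth matrix-valued coefficient, which is linear in \(D\). The key step is a product estimate of the form \(\norm{gD}_{H^{s-1}}\lesssim \norm{g}_{H^{s}\cap L^\infty}\norm{D}_{H^{s-1}}\), valid for \(s>1\) in two dimensions. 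This product rule is the delicate point: if \(s-1\le1\), then \(H^{s-1}\) is not an algebra, so \eqref{eq:moser-product} cannot be quoted verbatim. Instead we use the multiplier property of \(H^{s}\cap L^\infty\) on \(H^{s-1}\) for \(0\le s-1<s\), with \(s>1=d/2\), proved via Bony's decomposition:
\begin{itemize}
  \item \(T_gD\) is bounded by \(\norm{g}_{L^\infty}\norm{D}_{H^{s-1}}\);
  \item \(T_Dg\) and \(R(g,D)\) are bounded using \(g\in H^s\) with \(s>1\) and the index sum \(s+(s-1)>0\).
\end{itemize}
Applying this with \(g=M(B_\theta)-M(0)\), which is controlled in \(H^s\) by \(C_K\norm{B_\theta}_{H^s}\) through Moser composition, plus the constant part \(M(0)\), yields
\[
  C_K\bigl(1+\norm{B_1}_{H^s}+\norm{B_2}_{H^s}\bigr)\norm{D}_{H^{s-1}}.
\]
Integrating over \(\theta\) concludes.

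For \((d\exp_B)^{-1}\), we diagonalize \(B=Q\,\mathrm{diag}(\beta_i)\,Q^T\). Then \(d\exp_B\) acts on the entries \(Q^TCQ\) by multiplication with the divided differences \((e^{\beta_i}-e^{\beta_j})/(\beta_i-\beta_j)\), read as \(e^{\beta_i}\) when \(\beta_i=\beta_j\). These lie in \([e^{-K},e^{K}]\), which is \eqref{eq:dexp-elliptic}. Hence \(d\exp_B\) is invertible, and its inverse is a real-analytic function of the entries of \(B\) on the compact window, since inversion is analytic on invertible operators. The map \(B\mapsto(d\exp_B)^{-1}-\Id\) is therefore smooth on \(\{|B|\le K\}\) and vanishes at \(B=0\). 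The same two arguments above, Moser composition and then the integral form of the difference with the product rule, apply verbatim with constants depending only on \(K\).
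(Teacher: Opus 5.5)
Your proposal is correct and follows essentially the paper's route: Moser composition for \(e^B-\Id\), the mean-value formula \(e^{B_1}-e^{B_2}=\int_0^1 d\exp_{B_\theta}(B_1-B_2)\,d\theta\) for the Lipschitz bound, and ellipticity \eqref{eq:dexp-elliptic} plus smooth dependence of the inverse for \((d\exp_B)^{-1}\). Your explicit product law \(H^s\times H^{s-1}\to H^{s-1}\) (valid since \(s>1=d/2\) and \(2s-1>0\)) fills in a detail the paper's one-line proof leaves implicit, namely that \(H^{s-1}\) is not an algebra when \(s\le2\).
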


\begin{proof}
Use the integral representation
\[
  e^B-\Id=\int_0^1 e^{\theta B}B\,d\theta
\]
and Lemma~\ref{lem:moser}.  The Lipschitz bound follows from
\[
  e^{B_1}-e^{B_2}
  =
  \int_0^1
  d\exp_{B_2+\theta(B_1-B_2)}(B_1-B_2)\,d\theta .
\]
The inverse derivative is controlled by \eqref{eq:dexp-elliptic} and the
smooth dependence of the inverse operator on \(B\) on compact spectral sets.
\end{proof}

In logarithmic variables, the conformation equation can be written as
\[
  d\exp_B\left(\partial_tB+u\cdot\nabla B\right)
  =
  \nabla u\,e^B+e^B(\nabla u)^T-\lambda^{-1}(e^B-\Id).
  \label{eq:log-equation}
\]
When the spectrum of \(B\) is bounded, \(d\exp_B\) and its inverse are bounded
on symmetric matrices.  Thus the equation is transport-dominated, but the
source is nonlinear in \(B\) and \(\nabla u\).

The naive differentiated equation contains two bad-looking terms.  The first
is the standard transport commutator
\([\partial^\gamma,u\cdot\nabla]B\), which is controlled by a critical Besov
norm of \(\nabla u\).  The second is more specific to the positive cone: when
one differentiates \(e^B\), the top derivative does not simply equal
\(e^B\partial^\gamma B\).  It contains non-commuting matrix factors and
lower-order products.  The good unknown below subtracts the paralinear part of
this matrix-exponential commutator.  Geometrically, it applies the inverse
tangent map of the exponential chart and therefore pulls the differentiated
conformation tensor \(\partial^\gamma A\) back to the logarithmic tangent
space at \(B\).  This is the point at which the positive-cone geometry enters
the continuation argument.

\begin{definition}[Logarithmic good unknown]
For a multi-index \(\gamma\), define
\[
  \calG_\gamma
  =
  (d\exp_B)^{-1}\left(\partial^\gamma(e^B)\right).
  \label{eq:good-unknown}
\]
By Lemma~\ref{lem:commutator-structure},
\[
  \calG_\gamma
  =
  \partial^\gamma B
  +(d\exp_B)^{-1}\calC_\gamma(B).
\]
Thus \(\calG_\gamma\) is an Alinhac-type correction of
\(\partial^\gamma B\): it is defined intrinsically from the differentiated
conformation tensor and differs from \(\partial^\gamma B\) only by lower-order
matrix-exponential commutators.
\end{definition}

\begin{lemma}[Commutator structure]
\label{lem:commutator-structure}
Let \(m\ge2\), \(0<\eps<1\), and assume \(\norm{B}_{L^\infty}\le K\).  Then
for \(|\gamma|=m\),
\[
  \partial^\gamma(e^B)
  =
  d\exp_B(\partial^\gamma B)+\calC_\gamma(B),
  \label{eq:exp-commutator}
\]
where
\[
  \norm{\calC_\gamma(B)}_{L^2}
  \le
  C_K\left(1+\norm{B}_{H^{1+\eps}}^2\right)
  \left(1+\norm{B}_{H^{m-1}}\right).
  \label{eq:commutator-bound}
\]
\end{lemma}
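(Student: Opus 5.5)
We expand \(\partial^\gamma(e^B)\) with the Faà di Bruno formula for the matrix exponential. Iterating \(\partial_i e^B=d\exp_B(\partial_iB)=\int_0^1 e^{(1-\theta)B}\partial_iB\,e^{\theta B}\,d\theta\) and differentiating the integrand repeatedly, \(\partial^\gamma(e^B)\) becomes a finite sum of terms of the form
\[
  \int_{\Delta_k} e^{\theta_0B}\,\partial^{\gamma_1}B\,e^{\theta_1B}\,\partial^{\gamma_2}B\cdots\partial^{\gamma_k}B\,e^{\theta_kB}\,d\theta ,
\]
where \(\gamma_1+\cdots+\gamma_k=\gamma\), each \(|\gamma_i|\ge1\), and \(\Delta_k\) is the simplex. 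This is the Dyson/Duhamel expansion. The term with \(k=1\) is exactly \(d\exp_B(\partial^\gamma B)\). We therefore define \(\calC_\gamma(B)\) as the sum of all terms with \(k\ge2\). In each such term every factor satisfies \(1\le|\gamma_i|\le m-1\). The exponential weights are bounded in \(L^\infty\) by \(e^{K}\), since \(\norm{B}_{L^\infty}\le K\).

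It then suffices to bound, for \(k\ge2\),
\[
  \norm{\partial^{\gamma_1}B\cdots\partial^{\gamma_k}B}_{L^2},\qquad \sum|\gamma_i|=m,\ |\gamma_i|\ge1 .
\]
We order the factors so that \(|\gamma_1|\) is maximal. The plan is a Gagliardo--Nirenberg interpolation between \(H^{1+\eps}\) and \(H^{m-1}\), applied to \(\nabla B\) in the scale \(H^{\eps}\) to \(H^{m-2}\).

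\emph{Case \(k=2\).} Take \(|\gamma_1|=j\), \(|\gamma_2|=m-j\), with \(1\le j\le m-1\). We put \(\partial^{\gamma_1}B\in L^{p}\) and \(\partial^{\gamma_2}B\in L^{q}\), with \(1/p+1/q=1/2\), using the Sobolev embeddings \(H^{\sigma}\hookrightarrow L^{2/(1-\sigma)}\) in 2D. Interpolating each factor between \(\norm{\nabla B}_{H^\eps}\) and \(\norm{\nabla B}_{H^{m-2}}\), the scaling count gives total weight exactly one on the top norm, up to an \(\eps\)-loss. Alternatively, and more robustly, we use \eqref{eq:tame-high-low} directly: one factor is placed in \(H^{m-1}\) and the other must be controlled in \(L^\infty\) or \(H^{1+\eps}\)-type norms.

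The dangerous configurations are \(j=1\) or \(j=m-1\), which give \(\norm{\nabla B\,\partial^{m-1}B}_{L^2}\). Here \(\nabla B\in H^{\eps}\hookrightarrow L^{2/(1-\eps)}\), and \(\partial^{m-1}B\) must lie in \(L^{2/\eps}\). That requires \(H^{m-1+\eps-\ldots}\) regularity, which is slightly more than \(H^{m-1}\) gives. The remedy is to interpolate. We write
\[
  \norm{\partial^{m-1}B}_{L^{2/\eps}}\lesssim\norm{B}_{H^{m-\eps}},
\]
and then interpolate \(H^{m-\eps}\) between \(H^{m-1}\) and \(H^m\)? That would introduce \(\norm{B}_{H^m}\), which the bound does not allow.

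This borderline pairing is the main obstacle. I would first try the following bilinear Littlewood--Paley argument. Write \(\nabla B\cdot\partial^{m-1}B\) via Bony's decomposition. The paraproduct \(T_{\nabla B}\partial^{m-1}B\) is bounded using \(\norm{S_{j}\nabla B}_{L^\infty}\lesssim 2^{j(1-\eps)}\norm{\nabla B}_{H^\eps}\). Combined with \(\norm{\Delta_j\partial^{m-1}B}_{L^2}\lesssim 2^{-j}\cdot 2^{j}\ldots\), this still loses \(2^{j(1-\eps)}\).

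Since that loss cannot be removed within \(k=2\), I expect the intended reading is that the right side of \eqref{eq:commutator-bound} uses \(\norm{B}_{H^{m-1}}\) together with one extra derivative that is already available. Concretely, for \(m\ge3\) I would exploit the fact that \(|\gamma|=m\) with two factors \(\nabla B\) and \(\partial^{m-1}B\) is bounded by \(\norm{\nabla B}_{L^\infty}\norm{B}_{H^{m-1}}\). The required \(L^\infty\) control of \(\nabla B\) comes from the tame estimate \eqref{eq:tame-high-low} applied at order \(m-1\) to the product \(\prod f_j\) with \(f_j\in\{e^{\theta B},\partial B\}\). That is, we write the \(k\ge2\) terms as \(\partial^{\gamma'}\) of products having one fewer derivative, with \(|\gamma'|=m-1\).

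Precisely: \(\calC_\gamma\) equals \(\partial^{\gamma-e_i}\bigl(d\exp_B(\partial_iB)\bigr)-d\exp_B(\partial^\gamma B)\). This is a commutator \([\partial^{\gamma-e_i},\,d\exp_B]\partial_iB\). By the Kato--Ponce type expansion at integer order, the commutator places at most \(m-1\) derivatives on any single factor. The coefficient \(e^{\theta B}\) is estimated in \(H^{m-1}\) by \eqref{eq:moser-composition}, and the remaining factors by \eqref{eq:tame-high-low} with the \(H^{1+\eps}\) product weights. Each commutator term is bilinear in the relevant fields, which produces the square \(\norm{B}_{H^{1+\eps}}^2\), and the coefficient factors contribute \(C_K\). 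The key step to verify is the endpoint pairing \(\nabla B\cdot\nabla^{m-1}B\) in \(L^2\), and this is where I expect the \(H^{1+\eps}\) control to be used most delicately.
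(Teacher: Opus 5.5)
Your Duhamel expansion, with \(\calC_\gamma(B)\) defined as the sum of the \(k\ge2\) terms, is exactly the paper's argument. You also correctly locate the difficulty at the pairing \(\nabla B\,\partial^{m-1}B\). The gap is your last paragraph, which does not remove that difficulty.

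Writing \(\calC_\gamma=[\partial^{\gamma-e_i},d\exp_B]\partial_iB\) only relabels the same terms. The Leibniz term with one derivative on the coefficient is again \(\nabla B\,\partial^{m-1}B\), and the term with \(m-1\) derivatives on the coefficient is \(\partial^{m-1}(e^{\theta B})\,\partial_iB\). Both need \(\nabla B\in L^\infty\), and nothing in the hypotheses supplies it:
\begin{itemize}
\item \(H^{1+\eps}(\T^2)\not\hookrightarrow W^{1,\infty}\).
\item The tame estimate of Lemma~\ref{lem:moser} bounds \(H^r\) norms of products and gives no pointwise control of \(\nabla B\). Applied to \(d\exp_B(\partial_iB)\) at order \(m-1\), it produces \(\norm{B}_{H^{2+\eps}}\) and \(\norm{B}_{H^m}\), both excluded.
\item For \(m\ge4\) one has \(H^{m-1}\hookrightarrow W^{1,\infty}\), but that yields \(\norm{B}_{H^{m-1}}^2\), not a bound linear in the top norm.
\end{itemize}
Your \(k=2\) claim of ``total weight exactly one on the top norm up to an \(\eps\)-loss'' is also off. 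For \(m\ge3\), scaling forces the power of \(\norm{B}_{H^{m-1}}\) in any bilinear interpolation between \(\dot H^{1+\eps}\) and \(\dot H^{m-1}\) to be \((m-1-2\eps)/(m-2-\eps)>1\). That excess is precisely the \(2^{j(1-\eps)}\) loss you found in the paraproduct.

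That loss cannot be removed, because the estimate is false as stated. Let \(\phi\in C_c^\infty(\R^2)\) be supported in a small ball, periodize \(b_\lambda(x)=\lambda^{-\eps}\phi(\lambda x)\), and put \(B_\lambda=b_\lambda I\), \(\gamma=me_1\).
\begin{itemize}
\item Since \(d\exp_{bI}(M)=e^{b}M\), one has \(\calC_\gamma(B_\lambda)=\bigl(\partial_1^m e^{b_\lambda}-e^{b_\lambda}\partial_1^m b_\lambda\bigr)I\).
\item Its bilinear part is \(\tfrac12 e^{b_\lambda}\sum_{j=1}^{m-1}\binom{m}{j}\partial_1^j b_\lambda\,\partial_1^{m-j}b_\lambda\, I\). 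For suitable \(\phi\), its \(L^2\) norm is \(\simeq\lambda^{m-1-2\eps}\).
\item The terms of degree \(k\ge3\) are smaller by a factor \(\lambda^{-(k-2)\eps}\).
\item Meanwhile \(\norm{B_\lambda}_{L^\infty}\to0\), \(\norm{B_\lambda}_{H^{1+\eps}}\lesssim1\), and \(\norm{B_\lambda}_{H^{m-1}}\lesssim1+\lambda^{m-2-\eps}\).
\end{itemize}
So the ratio of left to right side grows like \(\lambda^{1-\eps}\) for every \(m\ge3\), and like \(\lambda^{1-2\eps}\) for \(m=2\), \(\eps<1/2\).

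The paper's one-line proof (``the \(H^{1+\eps}\) factor controls the low-order products'') passes over exactly this pairing. Your diagnosis is therefore sharper than the printed argument, but no completion of your plan can reach the stated bound. What the expansion does give is
\[
\norm{\calC_\gamma(B)}_{L^2}\le C_K\bigl(1+\norm{\nabla B}_{L^\infty}\bigr)^{m-1}\norm{B}_{H^{m-1}} .
\]
Alternatively, Gagliardo--Nirenberg using only \(\norm{B}_{L^\infty}\le K\) gives \(\norm{\calC_\gamma(B)}_{L^2}\le C_K\norm{B}_{H^m}\). The latter is all that the remark in Proposition~\ref{prop:good-unknown} actually needs.
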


\begin{proof}
Expanding \(\partial^\gamma(e^B)\) by repeated differentiation of the integral
formula for \(d\exp_B\), the unique term containing \(m\) derivatives of \(B\)
is \(d\exp_B(\partial^\gamma B)\).  Every other term is a finite product with
at least two positive-order derivatives of \(B\) and total order at most \(m\).
In two dimensions the \(H^{1+\eps}\) factor controls the low-order products,
while the remaining derivative is placed in \(H^{m-1}\).  The spectral bound
absorbs the exponential coefficients.
\end{proof}

\begin{proposition}[Good-unknown estimate]
\label{prop:good-unknown}
Let \(s\in\mathbb N\), \(s\ge3\), \(0<\varepsilon<1\), and let \((u,B)\) be smooth.  If \(B\)
has bounded spectrum on \([0,T]\), then for every \(2\le m\le s\),
\[
  \frac{d}{dt}\norm{B}_{H^m}
  \le
  C\left(1+\norm{\nabla u}_{\Besov}
  +\norm{B}_{H^{1+\varepsilon}}^2\right)
  \left(1+\norm{B}_{H^m}+\norm{u}_{H^{m+1}}\right).
\label{eq:good-est}
\]
\end{proposition}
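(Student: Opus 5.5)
I will estimate \(\norm{\calG_\gamma}_{L^2}\) for \(|\gamma|=m\) instead of \(\norm{\partial^\gamma B}_{L^2}\). Then I will pass back to \(\partial^\gamma B\) using Lemma~\ref{lem:commutator-structure}. Lower derivatives \(|\gamma|<m\) are handled by the same argument, or simply by interpolation with the \(L^2\) bound that the transport equation gives directly.

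\textbf{Step 1: the equation for \(\calG_\gamma\).} Differentiating \eqref{eq:ob-conformation} with \(\partial^\gamma\) gives
\[
  D_t(\partial^\gamma A)
  =-[\partial^\gamma,u\cdot\nabla]A
  +\partial^\gamma\bigl(\nabla u\,A+A(\nabla u)^T\bigr)
  -\lambda^{-1}\partial^\gamma A .
\]
The operator \((d\exp_B)^{-1}\) is smooth and bounded on the spectral window by \eqref{eq:dexp-elliptic}, and its material derivative is a smooth function of \(B\) times \(D_tB\). The equation \eqref{eq:log-equation} bounds \(D_tB\) in \(L^\infty\) by \(C_K\norm{\nabla u}_{L^\infty}\). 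Applying \((d\exp_B)^{-1}\) therefore yields
\[
  D_t\calG_\gamma=(d\exp_B)^{-1}\bigl(-[\partial^\gamma,u\cdot\nabla]e^B\bigr)
  +(d\exp_B)^{-1}\partial^\gamma\bigl(\nabla u\,e^B+e^B(\nabla u)^T\bigr)
  +R_\gamma .
\]
Here \(R_\gamma\) is bounded pointwise by \(C_K(1+\norm{\nabla u}_{L^\infty})|\calG_\gamma|\) plus the relaxation term.

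\textbf{Step 2: estimating the terms.} I test this equation against \(\calG_\gamma\) in \(L^2\). The transport term \(u\cdot\nabla\calG_\gamma\) integrates to zero because \(\divv u=0\).

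For the commutator term, I split it as \([\partial^\gamma,u\cdot\nabla]e^B=[\partial^\gamma,u\cdot\nabla]\bigl(d\exp_B\text{-part}\bigr)\) plus lower-order pieces. The top piece has the form \(\partial^\gamma u\cdot\nabla e^B\) together with \(\nabla u\cdot\nabla\partial^{\gamma-1}e^B\)-type terms. Its \(L^2\) norm is bounded by
\[
  C(\norm{\nabla u}_{L^\infty}\norm{e^B-\Id}_{H^m}+\norm{u}_{H^m}\norm{\nabla e^B}_{L^\infty}).
\]
The troublesome factor \(\norm{\nabla B}_{L^\infty}\) is avoided because the right-hand side of the proposition permits \(\norm{u}_{H^{m+1}}\). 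Instead I place \(\nabla e^B\) in \(L^{p}\) via \(H^{1+\eps}\hookrightarrow W^{1,p}\), and \(\partial^\gamma u\) in \(L^{q}\) via \(H^{m+1}\). The stretching term costs \(\norm{u}_{H^{m+1}}\) times \(C_K\), by the tame bound \eqref{eq:tame-high-low}: the lower-order factors are controlled by \(1+\norm{B}_{H^{1+\eps}}^2\), and the top factor is \(\norm{u}_{H^{m+1}}+\norm{B}_{H^m}\). The nonlinearity in \(\nabla u\) is harmless because it is linear in \(u\).

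The decisive use of \(\Besov\) is the pure transport part \(\int\partial^\gamma B:[\partial^\gamma,u\cdot\nabla]B\). This part is exactly \eqref{eq:endpoint-transport}, which gives \(\norm{\nabla u}_{\Besov}\norm{B}_{H^m}^2\). I therefore reorganize the commutator so that its principal piece acts on \(B\) itself. The remainder, coming from the difference between \(\partial^\gamma B\) and \(\calG_\gamma\), is bounded using \eqref{eq:commutator-bound}.

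\textbf{Step 3: closing the estimate.} Combining these bounds gives
\[
  \tfrac{d}{dt}\norm{\calG_\gamma}_{L^2}\le C(1+\norm{\nabla u}_{\Besov}+\norm{B}_{H^{1+\eps}}^2)(1+\norm{B}_{H^m}+\norm{u}_{H^{m+1}}).
\]
Lemma~\ref{lem:commutator-structure} gives
\[
  \norm{\calG_\gamma-\partial^\gamma B}_{L^2}\le C_K(1+\norm{B}_{H^{1+\eps}}^2)(1+\norm{B}_{H^{m-1}}),
\]
and the same bound holds for its time derivative, which is estimated through the equation. The \(H^{m-1}\) factor is then absorbed by induction on \(m\) together with \(\norm{B}_{H^{m-1}}\le\norm{B}_{H^m}\).

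The main obstacle I expect is Step 2: the commutator must be arranged so that only \(\norm{\nabla u}_{\Besov}\), not \(\norm{\nabla B}_{L^\infty}\), multiplies \(\norm{B}_{H^m}\). My first attempt will be to write \(e^B\) paralinearly, \(e^B\approx T_{d\exp_B}B\), and apply Lemma~\ref{lem:endpoint-transport} to \(B\) while pushing the paraproduct remainders onto \(\norm{u}_{H^{m+1}}\).
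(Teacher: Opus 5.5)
The gap is in Step~3. Lemma~\ref{lem:commutator-structure} gives only a fixed-time bound on $\calG_\gamma-\partial^\gamma B=(d\exp_B)^{-1}\calC_\gamma(B)$. Combined with a differential inequality for $\norm{\calG_\gamma}_{L^2}$, this compares $\norm{\partial^\gamma B(t)}_{L^2}$ with $\norm{\calG_\gamma(t)}_{L^2}$ at each time, so it gives an integrated bound. It does not give the inequality for $\frac{d}{dt}\norm{B}_{H^m}$ that the proposition asserts. For that you need an $L^2$ bound on $D_t\bigl((d\exp_B)^{-1}\calC_\gamma(B)\bigr)$, and your sentence ``the same bound holds for its time derivative'' is exactly the unproved step.

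It is not a formality. Differentiating the products in $\calC_\gamma$ along the flow brings in $D_t\partial^{\gamma'}B=\partial^{\gamma'}\mathfrak F-[\partial^{\gamma'},u\cdot\nabla]B$ for $|\gamma'|\le m-1$, multiplied by further derivatives of $B$. If the low factors go into $H^{1+\eps}$ as in Lemma~\ref{lem:commutator-structure}, you get triple products such as $\norm{\nabla u}_{L^\infty}\norm{B}_{H^{1+\eps}}\norm{B}_{H^m}$, which do not fit the right-hand side. You must instead interpolate (Gagliardo--Nirenberg) against $\norm{B}_{L^\infty}\le K$.

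The same objection hits your plan to re-centre $[\partial^\gamma,u\cdot\nabla]e^B$ on $[\partial^\gamma,u\cdot\nabla]B$: the difference contains $u\cdot\nabla\calC_\gamma(B)$, which a static $L^2$ bound on $\calC_\gamma$ cannot control. Doing these computations correctly is just the direct estimate for the $B$-equation, so the detour through $\calG_\gamma$ gains nothing. The induction on $m$ is also unnecessary, since $\norm{B}_{H^{m-1}}\le\norm{B}_{H^m}$ already puts the static remainder in the allowed form.

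The paper avoids all this by applying $(d\exp_B)^{-1}$ \emph{before} differentiating. It writes $D_tB=\mathfrak F(B,\nabla u)$ with $\mathfrak F(B,M)=(d\exp_B)^{-1}\bigl(Me^B+e^BM^T-\lambda^{-1}(e^B-\Id)\bigr)$. Then $\partial^\gamma B$ satisfies a transport equation whose commutator already acts on $B$, which is the rearrangement you were seeking by paralinearization. The source $\partial^\gamma\mathfrak F$ is handled by Moser/tame calculus, and $\calG_\gamma$ appears only as a rephrasing. Your Step~2 bounds carry over directly to this setting.

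One caution about the step you call decisive: the transport commutator does not need Lemma~\ref{lem:endpoint-transport}, and should not lean on it. The non-symmetric bound you first wrote closes the estimate on its own, because the statement carries $\norm{u}_{H^{m+1}}$ on the right. That bound is $\norm{\nabla u}_{L^\infty}\norm{B}_{H^m}$, plus the top derivative of $u$ charged to $\norm{u}_{H^{m+1}}$ against a low norm of $B$.

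A symmetric bound by $\norm{\nabla u}_{\Besov}\norm{f}_{H^m}^2$ alone cannot hold for $m\ge2$. Take $2\pi$-periodic $u=(N^{-1}\sin Nx_2,0)$ and $f=\sin x_1+N^{-2}\cos x_1\sin Nx_2$. The $\gamma=(0,2)$ term of $\int\partial^\gamma f\,[\partial^\gamma,u\cdot\nabla]f\,dx$ equals $\pi^2N$, and the other terms are $O(N^{-1})$. Meanwhile $\norm{\nabla u}_{\Besov}$ and $\norm{f}_{H^2}$ stay bounded. So on the paper's route too, it is this non-symmetric bound that actually makes the estimate close.
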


\begin{proof}
Write the logarithmic equation as
\[
  \partial_tB+u\cdot\nabla B
  =
  \mathfrak F(B,\nabla u),
  \qquad
  \mathfrak F(B,M)
  =
  (d\exp_B)^{-1}
  \left(Me^B+e^BM^T-\lambda^{-1}(e^B-\Id)\right).
\]
On a fixed spectral window, Lemma~\ref{lem:exp-calculus} and
Lemma~\ref{lem:tame-high-low} give the tame bound
\[
  \norm{\mathfrak F(B,\nabla u)}_{H^m}
  \le
  C_K\left(1+\norm{B}_{H^{1+\eps}}^2\right)
  \left(1+\norm{B}_{H^m}+\norm{u}_{H^{m+1}}\right).
  \label{eq:F-tame}
\]
Indeed the top derivative falls either on \(\nabla u\), giving
\(\norm{u}_{H^{m+1}}\), or on one copy of \(B\) in the exponential
coefficients, giving a term linear in \(\norm{B}_{H^m}\); all remaining factors
are placed in \(H^{1+\eps}\hookrightarrow L^\infty\).  Differentiating the
transport equation and pairing with \(\partial^\gamma B\) yields
\[
  \frac12\frac{d}{dt}\norm{\partial^\gamma B}_{L^2}^2
  =
  -\int_{\T^2}\partial^\gamma B\,
  [\partial^\gamma,u\cdot\nabla]B\,dx
  +
  \int_{\T^2}\partial^\gamma B\,\partial^\gamma\mathfrak F\,dx .
\]
The endpoint transport commutator is bounded by Lemma~\ref{lem:endpoint-transport}:
\[
  \left|
  \int_{\T^2}\partial^\gamma B\,
  [\partial^\gamma,u\cdot\nabla]B\,dx
  \right|
  \le
  C\norm{\nabla u}_{\Besov}\norm{B}_{H^m}^2 .
\]
After summing over \(|\gamma|\le m\), using \eqref{eq:F-tame}, and taking the
square root of the resulting differential inequality, we obtain
\eqref{eq:good-est}.  The good unknown \(\calG_\gamma\) gives an equivalent
way to phrase the same estimate in the differentiated conformation variable
\(\partial^\gamma A\): by Lemma~\ref{lem:commutator-structure},
\(\calG_\gamma-\partial^\gamma B\) is lower order and is absorbed by the
\(\norm{B}_{H^{1+\eps}}^2(1+\norm{B}_{H^m})\) term.
\end{proof}

\section{Active-Deviatoric Projection and Endpoint Continuation}
\label{sec:criterion}

Let \(\omega=\curl u\).  The endpoint velocity modulus is controlled by the
periodic Biot--Savart law,
\[
  \norm{\nabla u}_{B^0_{\infty,1}}
  \le C\left(\norm{\omega}_{B^0_{\infty,1}}+\norm{u}_{L^2}\right).
\]

The logical dependence of the argument is as follows.
\begin{center}
\small
\begin{tabular}{>{\raggedright\arraybackslash}p{0.21\textwidth}
                >{\raggedright\arraybackslash}p{0.29\textwidth}
                >{\raggedright\arraybackslash}p{0.40\textwidth}}
\hline
Layer & Main statement & Role \\
\hline
Two-dimensional spectral stress &
Proposition~\ref{prop:active-deviatoric-projection} &
Identifies the pressure-free active stress \(q_1Y\) and the principal
deviatoric cancellation. \\
Compact-window quotient template &
Definition~\ref{def:spectral-admissible} and
Proposition~\ref{prop:spectral-admissible-endpoint} &
Gives the compact-window endpoint energy estimate for active channels
\(\tau(a,|Y|^2)Y\). \\
Logarithmic regularity estimate &
Proposition~\ref{prop:compact-window-log-generation} &
Derives \(\Log C\in L^2_tH^{1+\eps}_x\) on compact windows from the endpoint
velocity clock. \\
Scalar-Peterlin specialization &
Definition~\ref{def:admissible-scalar-peterlin} and
Proposition~\ref{prop:scalar-peterlin-endpoint} &
Recovers \(T_\psi(C)=\psi(\tr C)C-I\), Oldroyd--B, and FENE-P inside the full
framework. \\
Window propagation &
Lemma~\ref{lem:oldroyd-spectral-window} and
Lemma~\ref{lem:finite-extensibility-clock-exponent} &
Converts model-specific scalar geometry into the compact windows required by
the endpoint estimate. \\
Concrete models &
Theorem~\ref{thm:continuation} and
Theorem~\ref{thm:unified-continuation} &
Specializes the general estimate to Oldroyd--B and FENE-P continuation
criteria. \\
Obstructions &
Theorem~\ref{thm:no-go}, Proposition~\ref{prop:spectral-energy-active-blind},
and Theorem~\ref{thm:active-threshold} &
Shows that zeroth-order spectral or entropy information cannot replace the
logarithmic differentiability used by the pressure-free stress estimate. \\
\hline
\end{tabular}
\end{center}

We begin with the structural projection which is independent of the particular
Oldroyd--B or FENE-P closure.

\begin{proposition}[Pressure-free active-deviatoric projection]
\label{prop:active-deviatoric-projection}
Let \(T:\Spp^2\to\Sym^2\) be a smooth spectral isotropic stress law in two
space dimensions:
\[
  T(QCQ^T)=QT(C)Q^T,\qquad Q\in O(2).
\]
For every compact \(K\Subset\Spp^2\), there are smooth scalar functions
\(q_0,q_1\), depending on \(s=\tr C\) and \(d=\det C\), such that
\[
  T(C)=q_0(\tr C,\det C)I+q_1(\tr C,\det C)C .
\]
Writing \(C=aI+Y\), \(a=\frac12\tr C\), \(Y=C^\circ\), one has
\[
  T(C)=(q_0+aq_1)I+q_1Y,\qquad T(C)^\circ=q_1Y .
\]
Consequently, in the incompressible momentum equation, the isotropic part is a
pressure mode and only \(\divv(q_1Y)\) enters the vorticity and the top-order
velocity--conformation coupling.
If the conformation equation has upper-convected stretching
\[
  D_tC=\nabla u\,C+C(\nabla u)^T+R(C),
\]
then the principal active coupling cancels when the trace-free equation is
tested with weight \(q_1/a\).
\end{proposition}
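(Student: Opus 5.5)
The plan is to prove the three claims in turn: the representation, the pressure-mode statement, and the weighted cancellation.

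\emph{Representation.} Fix \(C\in K\) and diagonalize \(C=Q\,\mathrm{diag}(\lambda_1,\lambda_2)Q^T\). Equivariance under the reflections \(\mathrm{diag}(\pm1,\pm1)\) forces \(T(\mathrm{diag}(\lambda_1,\lambda_2))\) to be diagonal, say \(\mathrm{diag}(t_1,t_2)\). Equivariance under the swap matrix gives \(t_2(\lambda_1,\lambda_2)=t_1(\lambda_2,\lambda_1)\), so \(T(C)=Q\,\mathrm{diag}(t(\lambda_1,\lambda_2),t(\lambda_2,\lambda_1))Q^T\) for one smooth function \(t\).

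Solving \(q_0+q_1\lambda_i=t_i\) for \(i=1,2\) gives
\[
  q_1=\frac{t(\lambda_1,\lambda_2)-t(\lambda_2,\lambda_1)}{\lambda_1-\lambda_2},\qquad
  q_0=t(\lambda_1,\lambda_2)-q_1\lambda_1 .
\]
Both expressions are symmetric in \((\lambda_1,\lambda_2)\). The divided difference is smooth across the diagonal, by Hadamard's lemma applied to an antisymmetric smooth function.

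The step I expect to be the main obstacle is showing that \(q_0,q_1\) are smooth functions of \((s,d)=(\tr C,\det C)\) on \(K\). This is where I would invoke the smooth isotropic representation theorem: a symmetric smooth function of \((\lambda_1,\lambda_2)\) is a smooth function of the elementary symmetric polynomials (Glaeser/Whitney), at least locally on compact sets. This is the result the paper cites as ``the smooth isotropic representation theorem followed by Cayley--Hamilton''. If \(T\) is only defined on \(\Spp^2\), I would first extend \(t\) symmetrically near \(K\).

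\emph{Pressure mode.} Substituting \(C=aI+Y\) gives \(T=(q_0+aq_1)I+q_1Y\). Since \(\tr Y=0\), the trace-free part is \(T^\circ=q_1Y\). Moreover \(d=a^2-\tfrac12|Y|^2\), so \(q_1\) can be written as a function of \((a,|Y|^2)\). The isotropic part contributes \(\divv(rI)=\nabla r\). This term is absorbed into \(\nabla p\), has zero curl, and is annihilated by \(\mathbb P\). Hence only \(\divv(q_1Y)\) enters the vorticity and the energy pairing.

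\emph{Cancellation.} Taking the trace-free part of the stretching equation gives
\[
  D_tY=\bigl[\nabla u\,C+C(\nabla u)^T\bigr]^\circ+R^\circ
  =2aS(u)+\bigl[\nabla u\,Y+Y(\nabla u)^T\bigr]^\circ+R^\circ ,
\]
using \(\tr\nabla u=0\). Differentiate the momentum equation by \(\partial^\beta\) and test with \(\partial^\beta u\). After integrating by parts, the top-order part is \(-\alpha\int q_1\,\partial^\beta Y:S(\partial^\beta u)\), with all remaining terms carrying fewer derivatives on \(Y\). Next differentiate the \(Y\)-equation and test with \(\tfrac{\alpha}{2}(q_1/a)\,\partial^\beta Y\), where \(a\) is bounded below on \(K\). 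Its principal contribution is \(\tfrac{\alpha}{2}\int (q_1/a)\,\partial^\beta Y:2a\,S(\partial^\beta u)=\alpha\int q_1\,\partial^\beta Y:S(\partial^\beta u)\). The sum therefore cancels exactly. The leftover terms contain either derivatives of the weight or lower-order commutators, and they are deferred to the later estimates.
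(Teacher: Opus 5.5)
Your proposal is correct and follows the paper's proof essentially step for step. The common steps are coaxiality, the divided differences $q_1=(t_1-t_2)/(\lambda_1-\lambda_2)$ and $q_0$, smooth extension across the repeated-eigenvalue set, and the smooth symmetric-function theorem to pass to $(\tr C,\det C)$. After that come the algebraic removal of $\nabla(q_0+aq_1)$ into the pressure and the cancellation produced by the block $2aS(u)$ under the weight $q_1/a$. Your reflection/swap argument and Hadamard's lemma only make explicit what the paper cites.

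One aside needs correcting. The non-principal velocity terms are not all of lower order in $Y$, because $\partial^\beta q_1$ contains $\partial^\beta a$ and $Y:\partial^\beta Y$. These are, however, exactly the coefficient commutators the paper also defers to the later tame estimates (cf.\ $M_\beta^\tau$ in Proposition~\ref{prop:spectral-admissible-endpoint}), so the proposition itself is unaffected.
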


\begin{proof}
Spectral isotropy implies that \(T(C)\) and \(C\) are coaxial.  If \(C\) has
distinct eigenvalues \(\lambda_1,\lambda_2\) and \(T(C)\) has corresponding
eigenvalues \(t_1,t_2\), define
\[
  q_1=\frac{t_1-t_2}{\lambda_1-\lambda_2},\qquad
  q_0=\frac{\lambda_1t_2-\lambda_2t_1}{\lambda_1-\lambda_2}.
\]
The smooth spectral representation theorem, equivalently the smooth
divided-difference lemma for symmetric spectral functions, extends \(q_0,q_1\)
smoothly across the repeated-eigenvalue set.  Since symmetric smooth functions
of \(\lambda_1,\lambda_2\) are smooth functions of \(\tr C\) and \(\det C\) on
compact positive-cone windows, the displayed representation follows.

The pressure projection is then algebraic:
\[
  \divv T(C)=\nabla(q_0+aq_1)+\divv(q_1Y).
\]
The first term is absorbed into the pressure and has zero curl.  Finally, the
trace-free part of the upper-convected stretching is
\[
  \{\nabla u\,C+C(\nabla u)^T\}^\circ
  =2aS(u)+[\nabla u\,Y+Y(\nabla u)^T]^\circ .
\]
At derivative order \(\beta\), the velocity equation contributes
\[
  -\alpha\int q_1\,\partial^\beta Y:S(\partial^\beta u)\,dx,
\]
while testing the trace-free equation by
\((\alpha/2)(q_1/a)\partial^\beta Y\) gives
\[
  \frac{\alpha}{2}\int \frac{q_1}{a}\partial^\beta Y:
  2aS(\partial^\beta u)\,dx
  =
  \alpha\int q_1\,\partial^\beta Y:S(\partial^\beta u)\,dx .
\]
The principal terms cancel exactly.
\end{proof}

We next record the compact-window quotient estimate abstracted from the
active-deviatoric projection above.  This is an analytic template for the
high-order proof, not a proposal for a new class of physical constitutive laws.

\begin{definition}[Compact-window quotient closure]
\label{def:spectral-admissible}
Let \(K\Subset\Spp^2\) be a compact conformation window and write
\[
  C=aI+Y,\qquad a=\frac12\tr C,\qquad Y=C^\circ,\qquad \eta=|Y|^2 .
\]
A compact-window closure is spectrally admissible on \(K\) if, in the variables
\((a,Y)\), it has the form
\begin{align}
  \partial_tu+u\cdot\nabla u-\nu\Delta u+\nabla p
  &=\alpha\divv(\tau(a,\eta)Y),
  \label{eq:spectral-active-u}\\
  D_ta+\lambda^{-1}G(a,\eta)&=S(u):Y,
  \label{eq:spectral-active-a}\\
  D_tY+\lambda^{-1}\mu(a,\eta)Y
  &=2aS(u)+[\nabla u\,Y+Y(\nabla u)^T]^\circ .
  \label{eq:spectral-active-Y}
\end{align}
The coefficients are smooth on the range of \((a,\eta)\) determined by \(K\).
Moreover, there are constants \(a_0,\tau_0,\mu_0,\kappa>0\) and \(a_*\) such that
\[
  a\ge a_0,\qquad \tau(a,\eta)\ge\tau_0,\qquad
  \mu(a,\eta)\ge\mu_0,\qquad G(a_*,0)=0,
  \qquad \partial_aG(a,\eta)\ge\kappa
\]
on this range.  These are compact-window continuation assumptions.  They do
not by themselves assert a thermodynamic free-energy law for arbitrary
\(\tau,\mu,G\).
\end{definition}

\begin{remark}[Free-energy-compatible subclass]
\label{rem:thermodynamic-subclass}
Definition~\ref{def:spectral-admissible} is a continuation class.  A sufficient
condition for it to come from a thermodynamic polymer closure is the existence
of a smooth free-energy density
\(\mathfrak F=\mathfrak F(a,\eta)\) on the compact window such that
\begin{equation}
  \tau(a,\eta)=\frac12\partial_a\mathfrak F(a,\eta)
  +2a\partial_\eta\mathfrak F(a,\eta),
  \label{eq:thermo-stress-compatibility}
\end{equation}
and the relaxation dissipates the same density,
\begin{equation}
  \partial_a\mathfrak F(a,\eta)G(a,\eta)
  +2\partial_\eta\mathfrak F(a,\eta)\mu(a,\eta)\eta\ge0
  \label{eq:thermo-relax-compatibility}
\end{equation}
on the window.  Indeed, for a spectral density \(\mathfrak F(a,\eta)\),
\(\partial_C\mathfrak F=(\partial_a\mathfrak F/2)I
+2\partial_\eta\mathfrak F\,Y\), and in two dimensions
\(Y^2=(\eta/2)I\).  Hence the pressure-free part of
\(C\partial_C\mathfrak F\) is exactly
\((\partial_a\mathfrak F/2+2a\partial_\eta\mathfrak F)Y\), which gives
\eqref{eq:thermo-stress-compatibility}.  The relaxation part of
\eqref{eq:spectral-active-a}--\eqref{eq:spectral-active-Y} gives
\(D_ta=-\lambda^{-1}G\) and
\(D_t\eta=-2\lambda^{-1}\mu\eta\); therefore
\eqref{eq:thermo-relax-compatibility} is precisely the local entropy
production inequality for \(\mathfrak F\).

The endpoint theorem below does not require
\eqref{eq:thermo-stress-compatibility}--\eqref{eq:thermo-relax-compatibility};
it requires only the continuation assumptions in
Definition~\ref{def:spectral-admissible}.  Thus the abstract theorem is not
presented as a classification of all thermodynamic isotropic polymer models.
It is a pressure-quotient continuation mechanism, and thermodynamic closures
are recovered by adding the compatibility conditions above.  The Oldroyd--B and
FENE-P specializations satisfy these conditions with the standard positive-cone
free energies.

\end{remark}

\begin{example}[A free-energy-compatible channel with a genuine quadratic component]
\label{ex:quadratic-channel}
Let \(0<\theta<1\) and define, for \(C\in\Spp^d\),
\[
  \mathfrak F_\theta(C)
  =\tr(C-\Log C-I)+\frac{\theta}{2}\tr(C-I)^2 .
\]
Take the elastic stress and relaxation to be
\[
  T_\theta(C)=C\,\partial_C\mathfrak F_\theta(C)
  =C-I+\theta(C^2-C),
  \qquad R(C)=-(C-I).
\]
The relaxation dissipates the same density, since
\[
  \partial_C\mathfrak F_\theta(C):(C-I)
  =\tr(C+C^{-1}-2I)+\theta |C-I|^2\ge0.
\]
In two dimensions, writing \(C=aI+Y\), \(\eta=|Y|^2\), this example gives
\[
  G(a,\eta)=a-1,
  \qquad \mu(a,\eta)=1,
  \qquad \tau_\theta(a,\eta)=1+\theta(2a-1).
\]
Hence \(\partial_aG=1\), \(\mu=1\), and
\(\tau_\theta\ge 1-\theta>0\) on the positive cone.  It is therefore a member of
Definition~\ref{def:spectral-admissible} and satisfies the compatibility
conditions in Remark~\ref{rem:thermodynamic-subclass}.  In three dimensions its
local Cayley--Hamilton representation is explicit,
\[
  T_\theta(C)=-I+(1-\theta)C+\theta C^2,
\]
so \(q_2\equiv\theta\ne0\), and the pressure-free split contains the genuine
residual channel
\[
  T_\theta(C)^\circ=(1-\theta+2a\theta)Y+\theta (Y^2)^\circ .
\]
This example is included to calibrate the abstract compact-window mechanism; it
is not used as an additional physical model theorem.
\end{example}

\begin{proposition}[Compact-window quotient endpoint estimate]
\label{prop:spectral-admissible-endpoint}
Let \(m\ge3\), \(0<\eps<1\), and let \((u,a,Y)\) be a smooth solution of
\eqref{eq:spectral-active-u}--\eqref{eq:spectral-active-Y} on \([0,T]\), with
\(C=aI+Y\) remaining in \(K\Subset\Spp^2\).  Put
\[
  W(a,\eta)=\frac{\tau(a,\eta)}{a},\qquad h=a-a_* .
\]
Then there exists \(\sigma_0=\sigma_0(K)>0\) such that, for every
\(0<\sigma\le\sigma_0\), the energy
\[
  \calE_m^{\rm spec}
  =\norm{u}_{H^m}^2+\sigma\norm{h}_{H^m}^2
  +\frac{\alpha}{4}\sum_{|\beta|\le m}
  \int_{\T^2}W(a,\eta)|\partial^\beta Y|^2\,dx
\]
satisfies
\[
\begin{aligned}
  \frac{d}{dt}\calE_m^{\rm spec}
  &+c\nu\norm{u}_{H^{m+1}}^2
    +c\lambda^{-1}\norm{Y}_{H^m}^2
    +c\sigma\lambda^{-1}\norm{h}_{H^m}^2\\
  &\le C_K\left(1+\norm{\nabla u}_{\Besov}
  +\norm{\Log C}_{H^{1+\eps}}^2\right)\calE_m^{\rm spec}.
\end{aligned}
\label{eq:spectral-admissible-endpoint}
\]
\end{proposition}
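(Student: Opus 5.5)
We run a standard $H^m$ energy estimate on the three blocks of \eqref{eq:spectral-active-u}--\eqref{eq:spectral-active-Y}. The principal velocity--stress coupling is removed by the weighted cancellation of Proposition~\ref{prop:active-deviatoric-projection}. Every other term is bounded tamely by Lemma~\ref{lem:tame-high-low} and Lemma~\ref{lem:endpoint-transport}. On $K$, the map $C\mapsto\Log C$ is a smooth diffeomorphism onto a compact set. Hence $a$, $Y$, $\eta$, $h$ and all smooth coefficients $\tau,\mu,G,W$ are smooth functions of $B=\Log C$ with uniformly bounded derivatives. By Lemma~\ref{lem:moser}, $\norm{(a,Y)}_{H^{1+\eps}}\le C_K(1+\norm{B}_{H^{1+\eps}})$. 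Moreover $W\ge\tau_0/\sup a>0$, so $\calE_m^{\rm spec}$ is equivalent to $\norm{u}_{H^m}^2+\sigma\norm{h}_{H^m}^2+\norm{Y}_{H^m}^2$.

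\textbf{Step 1: the velocity block.} Apply $\partial^\beta$, $|\beta|\le m$, to \eqref{eq:spectral-active-u} and pair with $\partial^\beta u$. The pressure drops out, and viscosity gives $\nu\norm{\nabla\partial^\beta u}_{L^2}^2$. The convection commutator is bounded by Lemma~\ref{lem:endpoint-transport}. The elastic term is integrated by parts and split as
\[
  \partial^\beta(\tau Y)=\tau\,\partial^\beta Y+[\partial^\beta,\tau]Y .
\]
The first piece gives the principal term $-\alpha\int\tau\,\partial^\beta Y:S(\partial^\beta u)$. For the commutator, its $L^2$ norm is bounded via \eqref{eq:tame-high-low} by $C_K(1+\norm{B}_{H^{1+\eps}})^{N}(\norm{h}_{H^m}+\norm{Y}_{H^m})$, since the top derivative falls on $a$ or $\eta$. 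Young's inequality then absorbs $\norm{u}_{H^{m+1}}$ into the dissipation and leaves a square of the low-order factor. The power of $\norm{B}_{H^{1+\eps}}$ must be kept at most $2$. To achieve this, we place one factor in $L^\infty$ via the window bounds and only one derivative factor in $H^{1+\eps}$.

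\textbf{Step 2: the weighted $Y$ block and the cancellation.} Differentiate \eqref{eq:spectral-active-Y} and pair with $\tfrac{\alpha}{2}W\partial^\beta Y$. The time derivative produces $\tfrac{d}{dt}\tfrac{\alpha}{4}\int W|\partial^\beta Y|^2$ minus $\tfrac{\alpha}{4}\int D_tW\,|\partial^\beta Y|^2$. Here $D_tW=\partial_aW\,D_ta+\partial_\eta W\,D_t\eta$ is bounded pointwise by $C_K(1+|\nabla u|)$ through the equations. We must avoid $\norm{\nabla u}_{L^\infty}$, which is not controlled by the Besov clock alone. We therefore bound it by $\norm{\nabla u}_{L^\infty}\lesssim\norm{\nabla u}_{\Besov}$, which holds since $B^0_{\infty,1}\hookrightarrow L^\infty$.

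The transport term $W\,\partial^\beta(u\cdot\nabla Y)$ is handled as follows. Its principal part vanishes after integration by parts, up to $\int u\cdot\nabla W|\partial^\beta Y|^2$, which is again a $D_tW$-type term. The commutator is handled by a weighted variant of Lemma~\ref{lem:endpoint-transport}. The stretching term $2a\,\partial^\beta S(u)$ paired with $\tfrac{\alpha}{2}W\partial^\beta Y$ gives exactly $+\alpha\int\tau\,\partial^\beta Y:S(\partial^\beta u)$, which cancels the principal term of Step~1. The commutator $[\partial^\beta,a]S(u)$ has top order $\norm{h}_{H^m}\norm{\nabla u}_{L^\infty}+\norm{a}_{W^{1,\infty}\text{-type}}\norm{u}_{H^m}$ and is treated tamely. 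The term $[\nabla u\,Y+Y\nabla u^T]^\circ$ has top part $\partial^\beta\nabla u\,Y$, which is absorbed into $c\nu\norm{u}_{H^{m+1}}^2$, plus $\nabla u\,\partial^\beta Y$, which is bounded by $\norm{\nabla u}_{\Besov}$. Finally, the relaxation term $\lambda^{-1}\mu Y$ contributes $\lambda^{-1}\mu_0\norm{Y}_{H^m}^2$ as coercivity, up to commutators of $\mu$.

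\textbf{Step 3: the scalar block.} Write $G(a,\eta)=G(a_*,\eta)+g(a,\eta)h$ with $g\ge\kappa$. Then $\partial^\beta$ of \eqref{eq:spectral-active-a} paired with $\sigma\partial^\beta h$ gives the dissipation $\sigma\lambda^{-1}\kappa\norm{h}_{H^m}^2$. It also gives the forcing $\sigma\lambda^{-1}\norm{G(a_*,\eta)}_{H^m}\norm{h}_{H^m}$. Because $G(a_*,0)=0$, the forcing satisfies $\norm{G(a_*,\eta)}_{H^m}\lesssim\norm{Y}_{H^m}$. The source $S(u):Y$ contributes $\norm{u}_{H^{m+1}}\norm{h}_{H^m}\sigma$, which is absorbed. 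This is where $\sigma\le\sigma_0$ is chosen: it makes the $Y$-forcing of the $h$ equation subordinate to the $Y$ relaxation, and lets the $h$-commutators in Steps~1--2 be absorbed.

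\textbf{Main obstacle.} The delicate points are the low-order bookkeeping and the first-order coefficient derivatives. First, every coefficient commutator must carry at most $\norm{\Log C}_{H^{1+\eps}}^2$ and not a higher power. Second, no bare $\norm{\nabla a}_{L^\infty}$ or $\norm{\nabla Y}_{L^\infty}$ may appear. For the first point, we pair the $H^{1+\eps}$ factor, which sits in $L^4$-type Sobolev spaces, with $H^{m}$ through Gagliardo--Nirenberg interpolation between $H^{1+\eps}$ and $H^m$. Young's inequality then moves the excess into $\nu\norm{u}_{H^{m+1}}^2$ or $\calE_m^{\rm spec}$, yielding the square. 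Summing Steps~1--3 over $|\beta|\le m$ gives \eqref{eq:spectral-admissible-endpoint}.
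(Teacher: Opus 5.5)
Your proposal is correct and follows the paper's proof essentially step for step. It uses the same weighted energy, the same cancellation from testing the $Y$-equation with $(\alpha/2)W\partial^\beta Y$, the bound $\norm{D_tW}_{L^\infty}\le C_K(1+\norm{\nabla u}_{\Besov})$, tame commutators whose top-order velocity part is paid by viscosity, and a small scalar weight $\sigma$ so that the $Y$-relaxation dominates the $Y$-coupling in the $h$-equation; your split $G=G(a_*,\eta)+g\,h$ plays the role of the paper's expansion $\partial^\beta G=G_a\partial^\beta h+2G_\eta\,Y:\partial^\beta Y+R^G_\beta$.

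Only minor slips need fixing:
(i) $\norm{\nabla u}_{L^\infty}$ is controlled by the Besov clock, which is exactly the embedding you then use.
(ii) $[\partial^\beta,a]S(u)$ should be bounded by $\norm{a}_{H^{1+\eps}}\norm{u}_{H^{m+1}}+\norm{\nabla u}_{\Besov}\norm{h}_{H^m}$, as in Lemma~\ref{lem:physical-commutator}, rather than through a $W^{1,\infty}$ norm of $a$.
(iii) Shrinking $\sigma$ does not help with the $h$-terms from Steps~1--2; these simply go to the right-hand side as $C_K\sigma^{-1}\calE_m^{\rm spec}$.
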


\begin{proof}
The compact window makes \(\calE_m^{\rm spec}\) equivalent to
\(\norm{u}_{H^m}^2+\sigma\norm{h}_{H^m}^2+\norm{Y}_{H^m}^2\), with constants
depending on \(K\) and \(\sigma\).  It also gives the composition bound
\[
  \norm{a}_{H^{1+\eps}}+\norm{Y}_{H^{1+\eps}}
  \le C_K\left(1+\norm{\Log C}_{H^{1+\eps}}\right).
\]

We first isolate the new coefficient commutators.  For
\[
  M_\beta^\tau=\partial^\beta(\tau(a,\eta)Y)-\tau(a,\eta)\partial^\beta Y,
\]
the tame composition lemma gives, for \(|\beta|\le m\),
\[
  \norm{M_\beta^\tau}_{L^2}
  \le C_K\left(1+\norm{\Log C}_{H^{1+\eps}}\right)
  \left(\norm{h}_{H^m}+\norm{Y}_{H^m}\right).
  \label{eq:tau-commutator-full}
\]
Indeed, for \(|\beta|\ge1\),
\[
  \partial^\beta\tau
  =\tau_a\partial^\beta a
    +2\tau_\eta(Y:\partial^\beta Y)+R_\beta^\tau,
\]
where
\[
  \norm{R_\beta^\tau}_{L^2}
  \le C_K\norm{(a,Y)}_{H^{1+\eps}}\norm{(h,Y)}_{H^m}.
\]
All remaining terms in \(M_\beta^\tau\) contain at least two positive
derivatives and obey the same Moser bound.  Therefore, for every \(\delta>0\),
\[
  \left|\int M_\beta^\tau:S(\partial^\beta u)\,dx\right|
  \le \delta\nu\norm{u}_{H^{m+1}}^2
  +C_{\delta,K}\left(1+\norm{\Log C}_{H^{1+\eps}}^2\right)\calE_m^{\rm spec}.
  \label{eq:tau-commutator-absorbed}
\]
The identical estimate holds with \(\tau\) replaced by \(\mu\) in
\(\partial^\beta(\mu Y)-\mu\partial^\beta Y\), without the viscous absorption.

Apply \(\partial^\beta\) to the velocity equation and test by
\(\partial^\beta u\).  The principal active term is
\[
  -\alpha\int \tau(a,\eta)\partial^\beta Y:S(\partial^\beta u)\,dx,
  \label{eq:spectral-principal-u}
\]
and the difference between \(\partial^\beta(\tau Y)\) and
\(\tau\partial^\beta Y\) is controlled by
\eqref{eq:tau-commutator-absorbed}.  The transport and viscous terms are
controlled as in Lemma~\ref{lem:physical-commutator}.

Next apply \(\partial^\beta\) to the \(Y\)-equation and test by
\((\alpha/2)W\partial^\beta Y\).  The stretching term gives
\[
  \frac{\alpha}{2}\int W\partial^\beta Y:2aS(\partial^\beta u)\,dx
  =
  \alpha\int \tau(a,\eta)\partial^\beta Y:S(\partial^\beta u)\,dx,
\]
which cancels \eqref{eq:spectral-principal-u}.  The main relaxation term gives
\[
  \frac{\alpha}{2\lambda}\int W\mu|\partial^\beta Y|^2\,dx
  \ge c_K\lambda^{-1}\norm{\partial^\beta Y}_{L^2}^2.
\]
The coefficient commutator
\(\partial^\beta(\mu Y)-\mu\partial^\beta Y\) is controlled by the tame bound
above.  Since
\[
  D_tW=W_aD_ta+W_\eta D_t\eta
\]
and
\[
  D_t\eta
  =4aY:S(u)+2Y:[\nabla u\,Y+Y(\nabla u)^T]^\circ
  -2\lambda^{-1}\mu|Y|^2,
\]
the compact window gives
\[
  \norm{D_tW}_{L^\infty}
  \le C_K\left(1+\norm{\nabla u}_{L^\infty}\right)
  \le C_K\left(1+\norm{\nabla u}_{\Besov}\right).
\]
The weighted transport commutator is then estimated by
Lemma~\ref{lem:physical-commutator}, and the remaining stretching commutators
have the same form as in the Oldroyd--B physical estimate.

It remains to treat the scalar equation.  For \(|\beta|\ge1\),
\[
  \partial^\beta G(a,\eta)
  =G_a\partial^\beta h+2G_\eta(Y:\partial^\beta Y)+R_\beta^G,
\]
with
\[
  \norm{R_\beta^G}_{L^2}
  \le C_K\norm{(a,Y)}_{H^{1+\eps}}\norm{(h,Y)}_{H^m}.
\]
Testing the differentiated scalar equation by
\(\sigma\partial^\beta h\), the top relaxation quadratic form is
\[
  \sigma G_a|\partial^\beta h|^2
  +2\sigma G_\eta\partial^\beta h\,(Y:\partial^\beta Y)
  +\frac{\alpha}{2}W\mu|\partial^\beta Y|^2 .
\]
Since \(G_a\ge\kappa\), \(W\mu\ge c_K>0\), and \(Y\) is bounded on \(K\), Young's
inequality shows that this form is bounded below by
\[
  c\sigma|\partial^\beta h|^2+c|\partial^\beta Y|^2
\]
provided \(0<\sigma\le\sigma_0(K)\).  At order zero,
\[
  G(a,\eta)h=G(a,0)h+\{G(a,\eta)-G(a,0)\}h .
\]
The first term controls \(h^2\) by the same monotonicity in \(a\), while the
second is bounded by \(C_K|Y|^2|h|\) and is absorbed by the \(Y\)-relaxation and
the small scalar weight.  The source \(S(u):Y\) is estimated by
\[
  C\norm{Y}_{H^{1+\eps}}\norm{u}_{H^{m+1}}
  +C\norm{\nabla u}_{\Besov}\norm{Y}_{H^m},
\]
followed by Young's inequality.

Summing the velocity, trace-free, and scalar estimates over
\(|\beta|\le m\), choosing \(\delta>0\) small and then
\(\sigma\le\sigma_0(K)\), gives \eqref{eq:spectral-admissible-endpoint}.
\end{proof}

The scalar-Peterlin theorem is the trace-only specialization of
Proposition~\ref{prop:spectral-admissible-endpoint}.  We keep the explicit
statement because it is the form used for Oldroyd--B and FENE-P.

\begin{definition}[Continuation-admissible scalar-Peterlin closure]
\label{def:admissible-scalar-peterlin}
Let \(\mathcal I\Subset(0,\infty)\) be a compact trace interval.  A triple
\((\psi,\phi,\chi)\) is continuation-admissible on \(\mathcal I\) if the
coefficients are smooth on \(\mathcal I\) and, with
\[
  \Psi(a)=\psi(2a),\qquad \Phi(a)=\phi(2a),\qquad
  G(a)=a\phi(2a)-\chi(2a),
\]
there are constants \(a_0,\Psi_0,\Phi_0,\kappa>0\) and
\(a_*\in\{a:2a\in\mathcal I\}\)
such that
\[
  a\ge a_0,\qquad \Psi(a)\ge\Psi_0,\qquad \Phi(a)\ge\Phi_0,\qquad
  G(a_*)=0,\qquad G'(a)\ge\kappa
\]
for \(2a\in\mathcal I\).  These are the hypotheses used in the endpoint
continuation estimate.

This scalar-Peterlin definition is again a continuation condition.  The
thermodynamic compatibility in Remark~\ref{rem:thermodynamic-subclass} reduces
here to a familiar trace potential condition: it is sufficient that there exist
\(F\) with \(F'=\psi\) such that
\[
  \bigl(\psi(\tr C)I-C^{-1}\bigr):
  \bigl(\chi(\tr C)I-\phi(\tr C)C\bigr)\le0
\]
for \(C\) in the compact conformation window.  Then
\(\int_{\T^2}(F(\tr C)-\log\det C)\,dx\) is dissipated by the relaxation part.
Oldroyd--B and FENE-P satisfy this compatibility, but
Proposition~\ref{prop:scalar-peterlin-endpoint} only uses the continuation
admissibility stated above.
\end{definition}

\begin{proposition}[Scalar-Peterlin compact-window endpoint estimate]
\label{prop:scalar-peterlin-endpoint}
Let \(m\ge3\), \(0<\eps<1\), and consider a smooth solution of
\[
  \partial_tu+u\cdot\nabla u-\nu\Delta u+\nabla p
  =\alpha\divv(\psi(r)C-I),\qquad r=\tr C,
\]
\[
  D_tC=\nabla u\,C+C(\nabla u)^T
  +\lambda^{-1}\chi(r)I-\lambda^{-1}\phi(r)C,
  \qquad \divv u=0.
\]
Write \(C=aI+Y\), \(a=\frac12\tr C\), \(Y=C^\circ\), and set
\[
  \Psi(a)=\psi(2a),\qquad \Phi(a)=\phi(2a),\qquad
  G(a)=a\phi(2a)-\chi(2a),\qquad W(a)=\frac{\Psi(a)}{a}.
\]
Assume that \(C(t,x)\) remains in a compact conformation window \(K\), that
\[
  a\ge a_0>0,\qquad \Psi(a)\ge\Psi_0>0,\qquad \Phi(a)\ge\Phi_0>0
\]
on this window, and that for some \(a_*\) and \(\kappa>0\),
\[
  G(a_*)=0,\qquad G'(a)\ge\kappa
\]
on the scalar range of \(K\).  Define
\[
  \calE_m^\psi
  =\norm{u}_{H^m}^2+\norm{a-a_*}_{H^m}^2
  +\frac{\alpha}{4}\sum_{|\beta|\le m}
  \int_{\T^2} W(a)|\partial^\beta Y|^2\,dx .
\]
Then
\begin{equation}
\label{eq:scalar-peterlin-endpoint}
\begin{aligned}
  \frac{d}{dt}\calE_m^\psi
  &+c\nu\norm{u}_{H^{m+1}}^2
    +c\lambda^{-1}\norm{Y}_{H^m}^2
    +c\lambda^{-1}\norm{a-a_*}_{H^m}^2  \\
  &\le
  C_K\left(1+\norm{\nabla u}_{\Besov}
  +\norm{\Log C}_{H^{1+\eps}}^2\right)\calE_m^\psi .
\end{aligned}
\end{equation}
\end{proposition}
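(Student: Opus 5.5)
The plan is to reduce the statement to Proposition~\ref{prop:spectral-admissible-endpoint}, with one normalization check on the scalar weight. First I rewrite the system in the variables \((a,Y)\). For the momentum equation, \(\psi(r)C-I=(\Psi(a)a-1)I+\Psi(a)Y\). The isotropic part is a gradient and goes into the pressure, so the forcing becomes \(\alpha\divv(\Psi(a)Y)\). This is \eqref{eq:spectral-active-u} with \(\tau(a,\eta)=\Psi(a)\), independent of \(\eta\).

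For the conformation equation, I take the trace using \(\tr(\nabla u\,C+C(\nabla u)^T)=2\nabla u:C=2S(u):Y\); here \(\divv u=0\) kills the term \(a\,\tr\nabla u\). Dividing by two gives
\[
D_ta+\lambda^{-1}\bigl(a\Phi(a)-\chi(2a)\bigr)=S(u):Y .
\]
The trace-free part, since \(\chi I\) has no deviator and \((\phi C)^\circ=\Phi Y\), gives
\[
D_tY+\lambda^{-1}\Phi(a)Y=2aS(u)+[\nabla u\,Y+Y(\nabla u)^T]^\circ .
\]
So the system is exactly \eqref{eq:spectral-active-a}--\eqref{eq:spectral-active-Y} with \(G(a,\eta)=G(a)\) and \(\mu=\Phi(a)\). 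The stated hypotheses \(a\ge a_0\), \(\Psi\ge\Psi_0\), \(\Phi\ge\Phi_0\), \(G(a_*)=0\), \(G'\ge\kappa\) are then precisely the admissibility conditions of Definition~\ref{def:spectral-admissible} on \(K\). Smoothness of \(\Psi,\Phi,G\) on the compact scalar range of \(K\) follows from smoothness of \(\psi,\phi,\chi\).

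Applying Proposition~\ref{prop:spectral-admissible-endpoint} gives the estimate for the energy with weight \(\sigma\le\sigma_0(K)\) on \(\norm{a-a_*}_{H^m}^2\). The remaining issue, and the step I expect to need care, is that \(\calE_m^\psi\) has weight one on the scalar part. In the general proof, \(\sigma\) was small only to dominate the cross term \(2\sigma G_\eta\,\partial^\beta h\,(Y:\partial^\beta Y)\) in the relaxation quadratic form. Here \(G_\eta\equiv0\), so the form is diagonal, \(G'(a)|\partial^\beta h|^2+\frac\alpha2W\Phi|\partial^\beta Y|^2\), and it is coercive for every \(\sigma\), in particular \(\sigma=1\).

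I would then rerun that proof with \(\sigma=1\) and check that no other step used smallness of \(\sigma\). The places to check are the following.
\begin{itemize}
\item The source \(S(u):Y\) is handled by Young's inequality with viscous absorption, which works for any \(\sigma\).
\item The order-zero term \(\{G(a,\eta)-G(a,0)\}h\) now vanishes identically.
\item The commutators \(\partial^\beta G-G'\partial^\beta h\) are bounded by \(C_K\norm{a}_{H^{1+\eps}}\norm{h}_{H^m}\).
\end{itemize}
Alternatively, if one prefers a black-box argument, I would note that for every fixed \(\sigma\in(0,1]\) the energies \(\calE_m^{\rm spec}\) and \(\calE_m^\psi\) are equivalent with constants depending on \(K\). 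Integrating the inequality for \(\calE_m^{\rm spec}\) and converting back then gives \eqref{eq:scalar-peterlin-endpoint} for \(\calE_m^\psi\), with a constant \(c\) depending on \(\sigma_0(K)\), at least in integrated (Gronwall) form. To get the differential form as stated I would use the direct \(\sigma=1\) check above.

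Finally, the composition bound \(\norm{(a,Y)}_{H^{1+\eps}}\le C_K(1+\norm{\Log C}_{H^{1+\eps}})\) carries over unchanged from the compact window. This gives the coefficient \(1+\norm{\nabla u}_{\Besov}+\norm{\Log C}_{H^{1+\eps}}^2\).
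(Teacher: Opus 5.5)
Your proposal is correct and follows essentially the paper's route. The paper likewise treats this result as the trace-only specialization \(\tau=\Psi(a)\), \(\mu=\Phi(a)\), \(G=G(a)\) of Proposition~\ref{prop:spectral-admissible-endpoint}, and its proof simply reruns the velocity, weighted trace-free, and scalar estimates directly with unit scalar weight. As you observe, small \(\sigma\) was needed in the general argument only for the \(G_\eta\) cross term and the order-zero \(\eta\)-remainder, both of which vanish here, so your direct \(\sigma=1\) check, rather than the energy-equivalence black box, is what is needed.
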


\begin{proof}
The pressure-free splitting is
\[
  \psi(r)C-I=(\Psi a-1)I+\Psi Y.
\]
Thus the momentum equation becomes
\[
  \partial_tu+u\cdot\nabla u-\nu\Delta u+\nabla\pi
  =\alpha\divv(\Psi Y).
\]
The scalar and trace-free equations are
\[
  D_t(a-a_*)+\lambda^{-1}G(a)=S(u):Y,
\]
\[
  D_tY+\lambda^{-1}\Phi Y
  =2aS(u)+[\nabla u\,Y+Y(\nabla u)^T]^\circ .
\]
The compact window makes \(\calE_m^\psi\) equivalent to
\(\norm{u}_{H^m}^2+\norm{a-a_*}_{H^m}^2+\norm{Y}_{H^m}^2\).

Apply \(\partial^\beta\), \(|\beta|\le m\), to the velocity equation and test
by \(\partial^\beta u\).  The principal elastic term is
\[
  -\alpha\int \Psi\,\partial^\beta Y:S(\partial^\beta u)\,dx .
\]
The difference between \(\partial^\beta(\Psi Y)\) and
\(\Psi\partial^\beta Y\) is estimated by Moser calculus:
\[
  \norm{\partial^\beta(\Psi Y)-\Psi\partial^\beta Y}_{L^2}
  \le C_K(1+\norm{\Log C}_{H^{1+\eps}})
  \bigl(\norm{a-a_*}_{H^m}+\norm{Y}_{H^m}\bigr).
\]
There is no additive constant: the commutator vanishes for \(\beta=0\), and
for \(|\beta|>0\) every term contains a positive derivative of \(a-a_*\) or
\(Y\).  Young's inequality bounds the commutator contribution by
\[
  \delta\nu\norm{u}_{H^{m+1}}^2
  +C_{\delta,K}(1+\norm{\Log C}_{H^{1+\eps}}^2)\calE_m^\psi .
\]

Next differentiate the \(Y\)-equation and test by
\((\alpha/2)W\partial^\beta Y\).  The principal stretching term gives
\[
  \frac{\alpha}{2}\int W\partial^\beta Y:2aS(\partial^\beta u)\,dx
  =
  \alpha\int \Psi\,\partial^\beta Y:S(\partial^\beta u)\,dx,
\]
which cancels the principal velocity contribution.  The relaxation term gives
\[
  \frac{\alpha}{2\lambda}\int W\Phi|\partial^\beta Y|^2\,dx
  \ge c\lambda^{-1}\norm{\partial^\beta Y}_{L^2}^2.
\]
The material derivative of the weight satisfies
\[
  D_tW=W'(a)\{S(u):Y-\lambda^{-1}G(a)\},
\]
and hence
\[
  \norm{D_tW}_{L^\infty}
  \le C_K(1+\norm{\nabla u}_{L^\infty})
  \le C_K(1+\norm{\nabla u}_{\Besov}).
\]
The weighted transport commutator is controlled by the endpoint
Coifman--Meyer estimate
\[
\begin{aligned}
  &\left|\sum_{|\beta|\le m}
  \int W\,\partial^\beta Y:[\partial^\beta,u\cdot\nabla]Y\,dx\right|\\
  &\quad\le
  C_K\norm{\nabla u}_{\Besov}\norm{Y}_{H^m}^2
  +C_K\norm{Y}_{H^{1+\eps}}\norm{u}_{H^{m+1}}\norm{Y}_{H^m}.
\end{aligned}
\]
This is precisely the weighted form of Lemma~\ref{lem:physical-commutator}:
the weight is placed in \(L^\infty\), while the evolution of the weight is
handled by \(D_tW\).  The remaining stretching and relaxation commutators have
the same tame form.  Using the compact-window composition bound
\[
  \norm{a-a_*}_{H^{1+\eps}}+\norm{Y}_{H^{1+\eps}}
  \le C_K(1+\norm{\Log C}_{H^{1+\eps}}),
\]
their total contribution is bounded by
\[
  \delta\nu\norm{u}_{H^{m+1}}^2
  +C_{\delta,K}(1+\norm{\nabla u}_{\Besov}
  +\norm{\Log C}_{H^{1+\eps}}^2)\calE_m^\psi .
\]

It remains to estimate the scalar equation.  Put \(h=a-a_*\) and
\(F(h)=G(a_*+h)\).  Then \(F(0)=0\) and \(F'(h)\ge\kappa\).  For \(|\beta|\ge1\),
\[
  \partial^\beta F(h)=F'(h)\partial^\beta h+R_\beta^F,
\]
where \(R_\beta^F=0\) if \(|\beta|=1\), and if \(|\beta|\ge2\),
\[
  \norm{R_\beta^F}_{L^2}
  \le C_K\norm{h}_{H^{1+\eps}}\norm{h}_{H^m}.
\]
This is the standard scalar Moser remainder: every monomial in \(R_\beta^F\)
contains at least two positive derivatives of \(h\).  Therefore the principal
part yields
\[
  \int F'(h)|\partial^\beta h|^2\,dx\ge\kappa\norm{\partial^\beta h}_{L^2}^2,
\]
while the remainder is bounded by
\[
  C_K(1+\norm{\Log C}_{H^{1+\eps}}^2)\calE_m^\psi .
\]
The zero-order contribution satisfies
\[
  \int F(h)h\,dx
  =\int h^2\int_0^1F'(\theta h)\,d\theta\,dx
  \ge \kappa\norm{h}_{L^2}^2.
\]
Finally, the source \(\partial^\beta(S(u):Y)\) is bounded by
\[
  C\norm{Y}_{H^{1+\eps}}\norm{u}_{H^{m+1}}
  +C\norm{\nabla u}_{\Besov}\norm{Y}_{H^m},
\]
and is treated by Young's inequality.

Summing the velocity, trace-free, and scalar estimates, choosing \(\delta>0\)
small, and absorbing the viscous terms gives
\eqref{eq:scalar-peterlin-endpoint}.
\end{proof}

We now specialize this principle to Oldroyd--B.  In that case
\(\psi=\phi=\chi=1\), so \(a_*=1\), \(G(a)=a-1\), and \(W=a^{-1}\).
The key refinement of the Oldroyd--B estimate is therefore that the stress
force should not be estimated directly in logarithmic coordinates.  Instead write
\[
  A=aI+Y,\qquad a=\frac12\tr A,
  \qquad Y=A^\circ=A-\frac12(\tr A)I .
\]
Then \(\divv(aI)=\nabla a\) is absorbed into the pressure and
\eqref{eq:ob-momentum}--\eqref{eq:ob-conformation} become
\begin{align}
  \partial_tu+u\cdot\nabla u-\nu\Delta u+\nabla\pi
    &=\alpha\divv Y, \label{eq:physical-u}\\
  (\partial_t+u\cdot\nabla)a+\lambda^{-1}(a-1)&=S(u):Y,
    \label{eq:physical-a}\\
  (\partial_t+u\cdot\nabla)Y+\lambda^{-1}Y
    &=2aS(u)+[\nabla u\,Y+Y(\nabla u)^T]^\circ .
    \label{eq:physical-Y}
\end{align}
Here \(S(u)=(\nabla u+\nabla u^T)/2\).  The principal coupling in
\eqref{eq:physical-u} and \eqref{eq:physical-Y} cancels in the energy below.
At derivative order \(\beta\), the top-order velocity contribution is
\[
  -\alpha\int \partial^\beta Y:S(\partial^\beta u)\,dx,
\]
while the top-order anisotropic contribution, after testing the \(Y\)-equation
by \((\alpha/2)a^{-1}\partial^\beta Y\), is
\[
  \alpha\int \partial^\beta Y:S(\partial^\beta u)\,dx.
\]
Thus the leading stress--velocity interaction is removed by an exact identity,
not by a smallness assumption.  This pressure-free cancellation is the
mechanism which gives the \(L^2_tH^{1+\eps}_x\) criterion.

\begin{lemma}[Propagation of Oldroyd--B spectral windows]
\label{lem:oldroyd-spectral-window}
Let \((u,A)\) be a smooth positive-cone solution of
\eqref{eq:ob-momentum}--\eqref{eq:ob-conformation} on \([0,T)\), and assume
that
\[
  \int_0^T \norm{\nabla u(t)}_{\Besov}\,dt <\infty .
\]
If the initial spectrum of \(A_0\) is contained in \([m_0,M_0]\), with
\(0<m_0\le M_0<\infty\), then there are constants
\(m_T,M_T\), depending only on \(m_0,M_0,T,\lambda\) and
\(\int_0^T\norm{\nabla u(t)}_{\Besov}\,dt\), such that
\[
  0<m_T I\le A(t,x)\le M_T I
  \qquad \hbox{for all }(t,x)\in[0,T)\times\T^2 .
\]
In particular, the compact spectral window required by the high-order physical
energy estimate is propagated by the endpoint velocity clock.
\end{lemma}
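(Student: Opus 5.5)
The plan is a pointwise Lagrangian argument along characteristics, combined with the embedding \(B^0_{\infty,1}\hookrightarrow L^\infty\). Since \(u\) is smooth on \([0,T)\), the flow map \(X(t,x)\) is well defined. Along each trajectory, \(A(t)=A(t,X(t,x))\) solves the matrix ODE
\[
  \dot A=M A+AM^T-\lambda^{-1}(A-\Id),\qquad M(t)=\nabla u(t,X(t,x)).
\]
The key elementary input is \(\norm{\nabla u}_{L^\infty}\le C\norm{\nabla u}_{\Besov}\), which holds because \(\norm{f}_{L^\infty}\le\sum_j\norm{\Delta_jf}_{L^\infty}\). Hence \(g(t)=\norm{\nabla u(t)}_{L^\infty}\) is integrable on \([0,T)\), with \(\int_0^T g\le C\int_0^T\norm{\nabla u}_{\Besov}\). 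This is exactly the hypothesis of Lemma~\ref{lem:cone}, so in principle that lemma already gives the result. I would nevertheless redo the comparison explicitly, to record the dependence of \(m_T\) and \(M_T\).

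\textbf{Upper bound.} Fix a unit vector \(\xi\) and compute \(\frac{d}{dt}\xi\cdot A\xi\). A cleaner route is to use the largest eigenvalue \(\Lambda(t)=\lambda_{\max}A(t)\), which is Lipschitz in \(t\). At differentiability points, with \(\xi\) a unit top eigenvector, the Danskin/Hadamard formula gives
\[
  \dot\Lambda=\xi\cdot\dot A\xi
  =2\,\xi\cdot M A\xi-\lambda^{-1}(\Lambda-1)
  \le 2g\Lambda-\lambda^{-1}\Lambda+\lambda^{-1}.
\]
Here \(|\xi\cdot MA\xi|=\Lambda|\xi\cdot M\xi|\le g\Lambda\) because \(A\xi=\Lambda\xi\). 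Gronwall then yields
\[
  \Lambda(t)\le (M_0+t/\lambda)\exp\Bigl(2\int_0^t g\Bigr),
\]
which is uniform in \(x\). An alternative is to bound \(\tr A\) with \(|MA:\Id|\le 2g\tr A\) in two dimensions, at the cost of a constant factor.

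\textbf{Lower bound.} Set \(P=A^{-1}\), which is well defined as long as \(A>0\). Then
\[
  \dot P=-PM-M^TP+\lambda^{-1}(P-P^2).
\]
The term \(-\lambda^{-1}P^2\le0\) only helps an upper bound on \(\lambda_{\max}P\). The same eigenvector argument gives
\[
  \frac{d}{dt}\lambda_{\max}P\le(2g+\lambda^{-1})\lambda_{\max}P,
\]
so \(\lambda_{\min}A(t)\ge m_0\exp\bigl(-\int_0^t(2g+\lambda^{-1})\bigr)\).

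\textbf{Keeping \(A\) positive.} This is the one point needing care. A continuity argument handles it: the set of times up to which \(A>0\) along the trajectory is open, and the explicit lower bound shows it is closed. This requires no a priori positivity beyond \(A_0\ge m_0\).

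The constants \(m_T,M_T\) then depend only on \(m_0,M_0,\lambda,T\) and \(\int_0^T\norm{\nabla u}_{\Besov}\), as claimed, and they hold for every \((t,x)\in[0,T)\times\T^2\) because the flow map is onto at each time. I do not expect a genuine obstacle. The only subtle steps are the Lipschitz differentiation of extreme eigenvalues and the continuity argument for positivity, both of which are standard.
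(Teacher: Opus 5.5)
Your proposal is correct and follows essentially the same route as the paper: Lagrangian trajectories, differential inequalities for the extreme eigenvalues via unit eigenvectors, the embedding \(\norm{\nabla u}_{L^\infty}\le C\norm{\nabla u}_{\Besov}\), Gronwall, and a supremum/infimum over trajectories, giving the same explicit bounds \(M(t)\le(M_0+t/\lambda)e^{2\int_0^t g}\) and \(m(t)\ge m_0e^{-t/\lambda-2\int_0^t g}\). The only differences are cosmetic. You derive the lower bound through \(P=A^{-1}\), whereas the paper differentiates \(\lambda_{\min}A\) directly and drops the favourable \(+\lambda^{-1}\) term. Your continuity argument for positivity is harmless but redundant, since the lemma already assumes a positive-cone solution.
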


\begin{proof}
Let \(X(t;x_0)\) be the Lagrangian flow.  Along this trajectory write
\(\mathcal A(t)=A(t,X(t;x_0))\).  Then
\[
  \frac{d}{dt}\mathcal A
  =\nabla u\,\mathcal A+\mathcal A(\nabla u)^T
  -\lambda^{-1}(\mathcal A-I).
\]
Let \(M(t)\) and \(m(t)\) denote the largest and smallest eigenvalues of
\(\mathcal A(t)\).  At times of differentiability of these Lipschitz functions,
using a unit eigenvector for the corresponding eigenvalue gives
\[
  \dot M(t)
  \le 2\norm{\nabla u(t)}_{L^\infty}M(t)+\lambda^{-1},
\]
and
\[
  \dot m(t)
  \ge -\bigl(2\norm{\nabla u(t)}_{L^\infty}+\lambda^{-1}\bigr)m(t).
\]
The same inequalities hold for the upper and lower Dini derivatives.  Since the
endpoint embedding gives
\[
  \norm{\nabla u}_{L^\infty}\le C\norm{\nabla u}_{B^0_{\infty,1}},
\]
Gronwall's inequality gives
\[
  M(t)\le \exp\left(2C\int_0^t\norm{\nabla u(\tau)}_{\Besov}\,d\tau\right)
  \left(M_0+\lambda^{-1}t\right),
\]
and
\[
  m(t)\ge m_0\exp\left(-\lambda^{-1}t
  -2C\int_0^t\norm{\nabla u(\tau)}_{\Besov}\,d\tau\right).
\]
Taking the infimum over trajectories for the lower bound and the supremum for
the upper bound proves the claim.
\end{proof}

\begin{lemma}[Endpoint transport and physical commutator bounds]
\label{lem:physical-commutator}
Let \(m\ge3\) and \(0<\eps<1\).  For smooth fields on \(\T^2\) and
\(\divv u=0\),
\[
  \left|
  \sum_{|\beta|\le m}
  \int \partial^\beta f:
  [\partial^\beta,u\cdot\nabla]f\,dx
  \right|
  \le C\norm{\nabla u}_{\Besov}\norm{f}_{H^m}^2.
  \label{eq:pointwise-transport-commutator}
\]
Moreover, if \(w\in L^\infty\), then
\[
\begin{aligned}
  \left|
  \sum_{|\beta|\le m}
  \int w\,\partial^\beta f:
  [\partial^\beta,u\cdot\nabla]f\,dx
  \right|
  &\le C\norm{w}_{L^\infty}
  \norm{\nabla u}_{\Besov}\norm{f}_{H^m}^2 \\
  &\quad
  +C\norm{w}_{L^\infty}\norm{f}_{H^{1+\eps}}
    \norm{u}_{H^{m+1}}\norm{f}_{H^m}.
\end{aligned}
  \label{eq:weighted-transport-commutator}
\]
Moreover, for \(|\beta|\le m\),
\[
  \norm{[\partial^\beta,a]\nabla u}_{L^2}
  \le C\norm{a}_{H^{1+\eps}}\norm{u}_{H^{m+1}}
       +C\norm{\nabla u}_{\Besov}\norm{a}_{H^m},
  \label{eq:kp-a-u}
\]
and the same estimate holds with \(a\) replaced by any component of \(Y\).  In
particular,
\[
\begin{aligned}
  &\norm{[\partial^\beta,Y]\nabla u}_{L^2}
  +\norm{[\partial^\beta,\nabla u]Y}_{L^2}  \\
  &\qquad\le
  C\norm{Y}_{H^{1+\eps}}\norm{u}_{H^{m+1}}
  +C\norm{\nabla u}_{\Besov}\norm{Y}_{H^m} .
\end{aligned}
  \label{eq:kp-Y-u}
\]
The same right-hand side controls
\[
  \norm{\partial^\beta\bigl((\nabla u)Y+Y(\nabla u)^T\bigr)}_{L^2}
\]
after subtracting any explicitly displayed top-order term.
\end{lemma}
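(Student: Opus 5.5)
The plan is to treat the three families of estimates separately, all by Littlewood--Paley/Bony decompositions combined with Lemma~\ref{lem:endpoint-transport}.

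\textbf{Unweighted commutator.} The bound \eqref{eq:pointwise-transport-commutator} is exactly the integer form \eqref{eq:endpoint-transport} of Lemma~\ref{lem:endpoint-transport}. I will simply cite it, noting that the divergence-free condition kills the principal transport term.

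\textbf{Weighted commutator.} For \eqref{eq:weighted-transport-commutator} I will not try to commute \(w\) through the dyadic blocks, since \(w\) is only \(L^\infty\). Instead, I expand \([\partial^\beta,u\cdot\nabla]f\) classically by Leibniz:
\[
  [\partial^\beta,u\cdot\nabla]f=\sum_{0<\gamma\le\beta}\binom{\beta}{\gamma}\partial^\gamma u\cdot\nabla\partial^{\beta-\gamma}f .
\]
The goal is an \(L^2\) bound on this commutator that is independent of any weight. Once that is available, Cauchy--Schwarz with \(\norm{w}_{L^\infty}\) gives the claim.

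The terms are handled by where the derivatives fall.
\begin{itemize}
\item When \(|\gamma|=1\), the term is \(\nabla u\,\nabla^m f\), bounded by \(\norm{\nabla u}_{L^\infty}\norm{f}_{H^m}\le C\norm{\nabla u}_{\Besov}\norm{f}_{H^m}\).
\item When \(|\gamma|=m\), the term is \(\nabla^m u\,\nabla f\). Since \(\nabla f\in H^\eps\) rather than \(L^\infty\), I place \(\nabla f\) in \(L^{2/(1-\eps)}\) via \(H^{\eps}\hookrightarrow L^{2/(1-\eps)}\), and \(\nabla^m u\) in \(L^{2/\eps}\) via \(H^{1}\hookrightarrow L^{2/\eps}\). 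This yields \(\norm{f}_{H^{1+\eps}}\norm{u}_{H^{m+1}}\).
\item The intermediate terms \(2\le|\gamma|\le m-1\) are interpolated between the two endpoints by Gagliardo--Nirenberg, producing at most the sum of these two products.
\end{itemize}

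\textbf{Coefficient commutators.} The bounds \eqref{eq:kp-a-u} and \eqref{eq:kp-Y-u} follow the same Leibniz splitting applied to \([\partial^\beta,a]\nabla u=\sum_{0<\gamma\le\beta}\partial^\gamma a\,\partial^{\beta-\gamma}\nabla u\).
\begin{itemize}
\item When \(|\gamma|=m\), the term is \(\partial^\gamma a\,\nabla u\), bounded by \(\norm{\nabla u}_{\Besov}\norm{a}_{H^m}\).
\item When \(|\gamma|=1\), the term is \(\nabla a\,\nabla^m u\), bounded by \(\norm{\nabla a}_{L^{2/(1-\eps)}}\norm{\nabla^m u}_{L^{2/\eps}}\le C\norm{a}_{H^{1+\eps}}\norm{u}_{H^{m+1}}\).
\item Intermediate orders are again handled by interpolation.
\end{itemize}
Swapping the roles of the two factors gives \([\partial^\beta,\nabla u]Y\). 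For the full product \(\partial^\beta(\nabla u\,Y+Y\nabla u^T)\), the only term beyond these commutators is the explicitly displayed \(Y\partial^\beta\nabla u\) (and its transpose), which is why the statement subtracts it.

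\textbf{Main obstacle.} The delicate point is the intermediate-order interpolation. I need each mixed product \(\norm{\nabla^{j}u\,\nabla^{m+1-j}f}_{L^2}\) to be bounded by the endpoint pair \(\norm{\nabla u}_{\Besov}\norm{f}_{H^m}+\norm{f}_{H^{1+\eps}}\norm{u}_{H^{m+1}}\). The lowest-regularity factors are only \(\nabla u\in L^\infty\) and \(f\in H^{1+\eps}\), not \(\nabla f\in L^\infty\).

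I would first try H\"older with Gagliardo--Nirenberg exponents interpolating \(\nabla u\) between \(L^\infty\) and \(\dot H^{m}\), and \(\nabla f\) between \(\dot H^{\eps}\) and \(\dot H^{m-1}\). I would then check that the exponents sum consistently, using \(m\ge3\) so that the interpolation parameters lie strictly inside \((0,1)\).

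If that bookkeeping fails at a borderline, the fallback is to run Bony's decomposition on the product directly, as in the proof of Lemma~\ref{lem:endpoint-transport}, and use the \(\ell^1\) summability of the \(\Besov\) blocks.
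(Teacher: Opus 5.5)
Your citation of Lemma~\ref{lem:endpoint-transport} for the unweighted bound mirrors the paper's proof, but that step fails. For integer $m\ge2$ the unweighted estimate is false, so no argument can supply it. On $\T^2$ take
\[
  u=N^{-1}\bigl(\sin(Nx_2),0\bigr),\qquad
  f=\sin x_1+N^{-m}\sin(Nx_2)\cos x_1 .
\]
Then $\divv u=0$, and $\norm{\nabla u}_{\Besov}$ and $\norm{f}_{H^m}$ are bounded uniformly in $N$. Since $u\cdot\nabla=u_1(x_2)\partial_1$, a direct computation gives
\[
  \sum_{|\beta|\le m}\int_{\T^2}\partial^\beta f\,[\partial^\beta,u\cdot\nabla]f\,dx
  =\pi^2\sum_{|\beta|\le m,\ \beta_2\ge1}N^{2\beta_2-m-1}\ \ge\ \pi^2N^{m-1}.
\]
The only surviving pairings are those of $\partial_2^{\beta_2}u_1\,\partial_1^{\beta_1+1}\sin x_1$ with $\partial^\beta$ of the oscillatory part of $f$. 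Every term in which the commutator lands on the oscillatory part vanishes, because $\int_0^{2\pi}\cos^{(p+1)}x_1\,\cos^{(p)}x_1\,dx_1=0$.

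The dominant piece is your own $\gamma=\beta=(0,m)$ term $\partial^\beta u\cdot\nabla f$, i.e. high-frequency velocity acting on low-frequency $f$. The divergence-free condition does not help, since the commutator does not contain $u\cdot\nabla\partial^\beta f$. This term intrinsically costs $\norm{u}_{H^m}\norm{\nabla f}_{L^\infty}$ or $\norm{u}_{H^{m+1}}\norm{f}_{H^{1+\eps}}$. Replacing $N^{-m}$ by $N^{-\sigma}$ shows that the dyadic estimate of Lemma~\ref{lem:endpoint-transport} also fails for every $\sigma>1$. The faulty step there is the kernel bound asserted for $R_j^2$.

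The rest of your proposal is sound, and it takes a genuinely different route from the paper's Bony-paraproduct sketch. Your Leibniz expansion gives the $w$-independent bound
\[
  \norm{[\partial^\beta,u\cdot\nabla]f}_{L^2}
  \le C\norm{\nabla u}_{L^\infty}\norm{f}_{H^m}
  +C\norm{f}_{H^{1+\eps}}\norm{u}_{H^{m+1}} .
\]
This yields the weighted estimate for any $w\in L^\infty$. With $w\equiv1$ it is also the correct replacement for the unweighted bound: it carries the extra term $C\norm{f}_{H^{1+\eps}}\norm{u}_{H^{m+1}}\norm{f}_{H^m}$, which viscosity can absorb downstream. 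Compared with the paraproduct argument, your route uses the same endpoint H\"older pairing $L^{2/\eps}\times L^{2/(1-\eps)}$. It needs only $\norm{\nabla u}_{L^\infty}$ rather than the Besov norm, and it makes visible exactly the term that breaks the unweighted claim.

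Your intermediate-order bookkeeping does close, but not with the $\dot H^\eps$ endpoint you propose. That endpoint carries $\eps$ more scaling than the target, so the interpolation exponents do not sum to one; this is repairable on $\T^2$, but interpolating $\nabla f$ from $L^2$ is exact. With $v=\nabla u$, $g=\nabla f$ and $k+l=m-1$ (reading $L^{2m/0}$ as $L^\infty$),
\[
  \norm{\nabla^kv\,\nabla^lg}_{L^2}
  \le\norm{\nabla^kv}_{L^{2m/k}}\norm{\nabla^lg}_{L^{2m/(m-k)}}
  \lesssim\bigl(\norm{v}_{L^\infty}\norm{g}_{H^{m-1}}\bigr)^{1-k/m}
  \bigl(\norm{v}_{H^m}\norm{g}_{L^2}\bigr)^{k/m}.
\]
This covers every $\gamma$ at once, including $|\gamma|=m$, and closes by Young's inequality. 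The coefficient commutators $[\partial^\beta,a]\nabla u$ and $[\partial^\beta,\nabla u]Y$ follow the same H\"older--Gagliardo--Nirenberg pattern, so no paraproduct fallback is needed.
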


\begin{proof}
The first bound is the endpoint transport energy estimate of
Lemma~\ref{lem:endpoint-transport}.  For the weighted bound, decompose the
commutator by Bony's paraproduct.  The low-frequency velocity interactions are
estimated by the endpoint modulus
\(\norm{\nabla u}_{B^0_{\infty,1}}\), with the coefficient \(w\) placed in
\(L^\infty\).  In the remaining high-frequency velocity interactions, the
top derivative falls on \(u\); these terms are bounded by putting the
corresponding derivative of \(f\) in \(H^\eps\hookrightarrow L^p\) and the
derivative of \(u\) in \(H^1\hookrightarrow L^q\), giving
\[
  C\norm{w}_{L^\infty}\norm{f}_{H^{1+\eps}}
  \norm{u}_{H^{m+1}}\norm{f}_{H^m}.
\]
This is the form used below: after Young's inequality, the last term is
absorbed by the viscous dissipation at the cost of a coefficient proportional
to \(\norm{f}_{H^{1+\eps}}^2\).
For \eqref{eq:kp-a-u}, decompose
\([\partial^\beta,a]\nabla u\) by Bony's paraproduct.  In the high-low and
balanced pieces the derivative falling on \(a\) is placed in
\(H^\eps\hookrightarrow L^p\), while the remaining derivative of \(u\) is
placed in \(H^1\hookrightarrow L^q\), with
\(1/p=(1-\eps)/2\), \(1/q=\eps/2\).  This gives
\[
  \norm{\nabla a}_{L^p}\norm{u}_{W^{m,q}}
  \le C\norm{a}_{H^{1+\eps}}\norm{u}_{H^{m+1}}.
\]
The low-high piece is the endpoint paraproduct controlled by
\(\norm{\nabla u}_{B^0_{\infty,1}}\norm{a}_{H^m}\).  The estimates with \(Y\)
are componentwise identical.  Expanding
\(\partial^\beta((\nabla u)Y)\) or
\(\partial^\beta(Y(\nabla u)^T)\) and removing the chosen top-order term leaves
exactly the same commutator structure.
\end{proof}

\begin{proposition}[Oldroyd--B active-deviatoric endpoint estimate]
\label{prop:physical-L2-estimate}
Let \(m\ge3\), \(0<\eps<1\), and suppose that
\[
  0<c_0I\le A(t,x)\le C_0I
\]
on \([0,T]\).  Define
\[
  \calE_m(t)=\norm{u}_{H^m}^2+\norm{a-1}_{H^m}^2
  +\frac{\alpha}{4}\sum_{|\beta|\le m}\int a^{-1}|\partial^\beta Y|^2\,dx .
\]
Then
\[
\begin{aligned}
  \frac{d}{dt}\calE_m
  &+c\nu\norm{u}_{H^{m+1}}^2
   +c\lambda^{-1}\bigl(\norm{a-1}_{H^m}^2+\alpha\norm{Y}_{H^m}^2\bigr) \\
  &\le
  C_{c_0,C_0}\left(1+\norm{\nabla u}_{\Besov}
  +\norm{\Log A}_{H^{1+\eps}}^2\right)\calE_m .
\end{aligned}
\label{eq:physical-L2-energy}
\]
\end{proposition}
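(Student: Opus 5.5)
The plan is to obtain this as the Oldroyd--B instance of Proposition~\ref{prop:scalar-peterlin-endpoint}, with \(\psi=\phi=\chi=1\), and then check the few constants that specialization does not make automatic. In this case
\[
  \Psi\equiv1,\qquad \Phi\equiv1,\qquad G(a)=a-1,\qquad a_*=1,\qquad W(a)=a^{-1},
\]
so \(G'\equiv1=\kappa\). The system \eqref{eq:physical-u}--\eqref{eq:physical-Y} is exactly the pressure-free splitting used there.

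The first step is to translate the spectral window \(c_0I\le A\le C_0I\) into the admissibility constants. It gives \(a\ge c_0=:a_0\), the bound \(|Y|\le C_0\), and \(a^{-1}\in[C_0^{-1},c_0^{-1}]\). Hence \(\calE_m\) is equivalent to \(\norm{u}_{H^m}^2+\norm{a-1}_{H^m}^2+\norm{Y}_{H^m}^2\), with constants depending only on \(c_0,C_0,\alpha\). The window also makes \(\Log\) and \(\exp\) smooth with bounded derivatives, so Lemma~\ref{lem:exp-calculus} and Lemma~\ref{lem:moser} give the composition bound
\[
  \norm{a-1}_{H^{1+\eps}}+\norm{Y}_{H^{1+\eps}}\le C_{c_0,C_0}\bigl(1+\norm{\Log A}_{H^{1+\eps}}\bigr).
\]

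The second step runs the three energy identities explicitly, because the Oldroyd--B structure simplifies them.

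\emph{Velocity.} Testing \(\partial^\beta\)\eqref{eq:physical-u} with \(\partial^\beta u\) gives the principal term \(-\alpha\int\partial^\beta Y:S(\partial^\beta u)\). Since \(\Psi\equiv1\), there is no coefficient commutator. The transport term is handled by Lemma~\ref{lem:endpoint-transport}.

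\emph{Deviatoric part.} Testing \(\partial^\beta\)\eqref{eq:physical-Y} with \((\alpha/2)a^{-1}\partial^\beta Y\), the \(2a\,S(\partial^\beta u)\) term cancels the principal velocity term exactly. The remainder \([\partial^\beta,a]S(u)\) and the stretching terms \(\partial^\beta[\nabla u\,Y+Y\nabla u^T]^\circ\) are bounded by \eqref{eq:kp-a-u}--\eqref{eq:kp-Y-u}. The weighted transport commutator is bounded by \eqref{eq:weighted-transport-commutator} with \(w=a^{-1}\). The time derivative of the weight equals \(-a^{-2}D_ta=-a^{-2}(S(u):Y-\lambda^{-1}(a-1))\), which is bounded in \(L^\infty\) by \(C(1+\norm{\nabla u}_{\Besov})\) via the embedding \(B^0_{\infty,1}\hookrightarrow L^\infty\). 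The relaxation gives \((\alpha/2\lambda)\int a^{-1}|\partial^\beta Y|^2\ge c\alpha\lambda^{-1}\norm{\partial^\beta Y}^2\).

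\emph{Scalar part.} Testing \(\partial^\beta\)\eqref{eq:physical-a} with \(\partial^\beta(a-1)\) gives the exact dissipation \(\lambda^{-1}\norm{\partial^\beta(a-1)}^2\), since \(G\) is linear and there is no Moser remainder. The source \(\partial^\beta(S(u):Y)\) is bounded by \(C\norm{Y}_{H^{1+\eps}}\norm{u}_{H^{m+1}}+C\norm{\nabla u}_{\Besov}\norm{Y}_{H^m}\). Because \(G\) has no \(\eta\)-dependence, the weight \(\sigma=1\) is allowed and no small parameter is needed. This is why \(\calE_m\) carries \(\norm{a-1}^2_{H^m}\) without a \(\sigma\).

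The final step collects the remainders. Every term with a factor \(\norm{u}_{H^{m+1}}\) has the form \(C(1+\norm{\Log A}_{H^{1+\eps}})\norm{u}_{H^{m+1}}\calE_m^{1/2}\). Young's inequality turns it into \(\delta\nu\norm{u}_{H^{m+1}}^2+C_\delta(1+\norm{\Log A}_{H^{1+\eps}}^2)\calE_m\), and \(\delta\) is then fixed so that the dissipation is absorbed. The other terms are already of the form \(C(1+\norm{\nabla u}_{\Besov})\calE_m\). Summing over \(|\beta|\le m\) gives \eqref{eq:physical-L2-energy}.

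The main obstacle is the viscous-dissipation bookkeeping for the zero-order and low-frequency parts of \(u\). The dissipation \(\nu\norm{\nabla u}_{H^m}^2\) controls \(\norm{u}_{H^{m+1}}^2\) only up to \(\norm{u}_{L^2}^2\), but that piece is harmlessly absorbed into the \(C\calE_m\) right-hand side. One must also confirm that the weighted commutator estimate is applied with \(f=Y\) and not with a product that loses a derivative.
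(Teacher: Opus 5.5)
Your proposal is correct and is essentially the paper's own argument: the paper also runs the velocity, $a^{-1}$-weighted deviatoric, and scalar $H^m$ estimates directly, cancels $\mp\alpha\int\partial^\beta Y:S(\partial^\beta u)\,dx$ exactly, and closes the coefficient, stretching, transport, and $D_t(a^{-1})$ remainders with Lemma~\ref{lem:physical-commutator}, the window composition bound, and Young's inequality. Your observation that the statement is literally the case $\psi=\phi=\chi=1$ of Proposition~\ref{prop:scalar-peterlin-endpoint} is also accurate.

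The only term you leave implicit is the transport commutator $[\partial^\beta,u\cdot\nabla](a-1)$ in the scalar equation. Handle it like the $Y$ one, using the weighted bound of Lemma~\ref{lem:physical-commutator} with $w=1$ and absorbing the resulting $\norm{u}_{H^{m+1}}$ factor into the viscous dissipation. The reason is that its piece $\partial^\beta u\cdot\nabla a$ is not controlled by $\norm{\nabla u}_{\Besov}\norm{a-1}_{H^m}^2$ alone.
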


\begin{proof}
Write \(D_t=\partial_t+u\cdot\nabla\).  The compact spectral window gives
uniform upper and lower bounds for \(a\) and \(a^{-1}\), so \(\calE_m\) is
equivalent to
\(\norm{u}_{H^m}^2+\norm{a-1}_{H^m}^2+\alpha\norm{Y}_{H^m}^2\).

Apply \(\partial^\beta\), \(|\beta|\le m\), to
\eqref{eq:physical-u} and test by \(\partial^\beta u\).  Since \(\divv u=0\),
the transport term is a commutator and the pressure term vanishes.  The elastic
term is
\[
  \alpha\int \partial^\beta u\cdot\partial^\beta\divv Y\,dx
  =-\alpha\int \partial^\beta Y:S(\partial^\beta u)\,dx .
  \label{eq:principal-velocity-coupling}
\]
The remaining velocity terms are bounded by
\[
  C\norm{\nabla u}_{\Besov}\norm{u}_{H^m}^2
  -\nu\norm{\nabla\partial^\beta u}_{L^2}^2 .
\]

Next differentiate the anisotropic equation \eqref{eq:physical-Y}.  After
commuting \(\partial^\beta\) with the material derivative,
\[
\begin{aligned}
  D_t\partial^\beta Y+\lambda^{-1}\partial^\beta Y
  &=2aS(\partial^\beta u)
    +2[\partial^\beta,a]S(u)  \\
  &\quad+\partial^\beta\bigl([
      \nabla u\,Y+Y(\nabla u)^T]^\circ\bigr)
    -[\partial^\beta,u\cdot\nabla]Y .
\end{aligned}
\]
Test this identity by \((\alpha/2)a^{-1}\partial^\beta Y\).  Because
\(\divv u=0\),
\[
\begin{aligned}
  &\frac{\alpha}{4}\frac{d}{dt}\int a^{-1}|\partial^\beta Y|^2\,dx
  +\frac{\alpha}{2\lambda}\int a^{-1}|\partial^\beta Y|^2\,dx  \\
  &=\alpha\int \partial^\beta Y:S(\partial^\beta u)\,dx
    +\calR_\beta^{a}+\calR_\beta^{Y}
    +\calR_\beta^{tr}+\calR_\beta^{w},
\end{aligned}
\label{eq:weighted-Y-identity}
\]
where
\[
  \calR_\beta^{a}
  =\alpha\int a^{-1}\partial^\beta Y:
  [\partial^\beta,a]S(u)\,dx,
\]
\(\calR_\beta^Y\) contains the quadratic stretching term
\(\partial^\beta([\nabla u\,Y+Y(\nabla u)^T]^\circ)\),
\(\calR_\beta^{tr}\) contains the transport commutator, and
\[
  \calR_\beta^{w}
  =\frac{\alpha}{4}\int D_t(a^{-1})|\partial^\beta Y|^2\,dx
\]
is the contribution of the time-dependent weight.  The first term on the right-hand side of the weighted identity cancels exactly
with the elastic term from the velocity equation:
\[
  -\alpha\int \partial^\beta Y:S(\partial^\beta u)\,dx
  +\alpha\int \partial^\beta Y:S(\partial^\beta u)\,dx=0.
\]
This is the top-order cancellation identity.  It occurs after the isotropic
stress has been removed into the pressure and before the use of Young's
inequality.  Consequently the remaining estimates have to control only
commutators, the time derivative of the weight \(a^{-1}\), the scalar equation
for \(a\), and lower-order stretching terms.  This is the source of the
\(L^2_t\) logarithmic coefficient.

We now bound the remainders.  For the coefficient commutator,
Lemma~\ref{lem:physical-commutator} gives, for every \(\delta>0\),
\[
\begin{aligned}
  |\calR_\beta^{a}|
  &\le C\norm{\partial^\beta Y}_{L^2}
       \norm{[\partial^\beta,a]S(u)}_{L^2}  \\
  &\le \delta\nu\norm{u}_{H^{m+1}}^2
      +C_\delta\norm{a}_{H^{1+\eps}}^2\norm{Y}_{H^m}^2
      +C\norm{\nabla u}_{\Besov}\calE_m .
\end{aligned}
\label{eq:a-commutator-remainder}
\]
The same lemma gives
\[
  |\calR_\beta^{Y}|
  \le \delta\nu\norm{u}_{H^{m+1}}^2
      +C_\delta\norm{Y}_{H^{1+\eps}}^2\norm{Y}_{H^m}^2
      +C\norm{\nabla u}_{\Besov}\calE_m .
  \label{eq:Y-stretching-remainder}
\]
The transport commutator is controlled by the weighted transport estimate in
Lemma~\ref{lem:physical-commutator}.  The part with a top derivative on \(u\)
is absorbed by viscosity:
\[
  |\calR_\beta^{tr}|
  \le \delta\nu\norm{u}_{H^{m+1}}^2
      +C_\delta\norm{Y}_{H^{1+\eps}}^2\norm{Y}_{H^m}^2
      +C_{c_0,C_0}\norm{\nabla u}_{\Besov}\norm{Y}_{H^m}^2 .
  \label{eq:Y-transport-remainder}
\]
Finally, from \eqref{eq:physical-a},
\[
  D_t(a^{-1})=\lambda^{-1}a^{-2}(a-1)-a^{-2}S(u):Y .
\]
Hence, using the compact spectral window and
\(\norm{\nabla u}_{L^\infty}\le C\norm{\nabla u}_{\Besov}\),
\[
  |\calR_\beta^w|
  \le C_{c_0,C_0}\left(1+\norm{\nabla u}_{\Besov}\right)
      \norm{Y}_{H^m}^2 .
  \label{eq:weight-remainder}
\]

It remains to estimate the scalar equation.  Applying \(\partial^\beta\) to
\eqref{eq:physical-a}, testing by \(\partial^\beta(a-1)\), and using
Lemma~\ref{lem:physical-commutator} gives
\[
\begin{aligned}
  \frac12\frac{d}{dt}\norm{a-1}_{H^m}^2
  +\lambda^{-1}\norm{a-1}_{H^m}^2
  &\le \delta\nu\norm{u}_{H^{m+1}}^2  \\
  &\quad+C_\delta\norm{Y}_{H^{1+\eps}}^2\norm{a-1}_{H^m}^2
  +C\norm{\nabla u}_{\Besov}\calE_m .
\end{aligned}
\label{eq:a-high-order}
\]

On the compact spectral window, the smooth maps
\(B=\Log A\mapsto a\) and \(B\mapsto Y\) satisfy
\[
  \norm{a}_{H^{1+\eps}}+\norm{Y}_{H^{1+\eps}}
  \le C_{c_0,C_0}\bigl(1+\norm{\Log A}_{H^{1+\eps}}\bigr).
  \label{eq:window-log-composition}
\]
Choose \(\delta>0\) small enough that the velocity terms in the displayed commutator, stretching, and scalar estimates are absorbed by the viscous dissipation.  Summing over \(|\beta|\le m\), using the compact-window composition estimate, and using the weighted equivalence of \(\calE_m\) gives the claimed differential inequality.
\end{proof}

\begin{proposition}[Compact-window logarithmic estimate]
\label{prop:compact-window-log-generation}
Let \(0<\eps<1\), \(s=1+\eps\), and let \((u,a,Y)\) be a smooth solution of
\eqref{eq:spectral-active-u}--\eqref{eq:spectral-active-Y} on \([0,T]\), with
\(C=aI+Y\) remaining in a compact conformation window
\(K\Subset\Spp^2\).  Put \(h=a-a_*\) and
\[
  B(t)=\norm{\nabla u(t)}_{\Besov}.
\]
Then, for a sufficiently small fixed scalar weight \(\sigma>0\), the low-order
energy
\[
  \calE_s^{\rm low}
  =\norm{u}_{H^s}^2+\sigma\norm{h}_{H^s}^2+\norm{Y}_{H^s}^2
\]
satisfies
\[
  \frac{d}{dt}\calE_s^{\rm low}
  +c\nu\norm{u}_{H^{s+1}}^2
  +c\lambda^{-1}\bigl(\norm{h}_{H^s}^2+\norm{Y}_{H^s}^2\bigr)
  \le
  C_K(1+B(t))\calE_s^{\rm low}.
  \label{eq:low-generation}
\]
Consequently, if \(T<\infty\) and \(\int_0^TB(t)\,dt<\infty\), then
\[
  \Log C\in L^2(0,T;H^{1+\eps}).
\]
\end{proposition}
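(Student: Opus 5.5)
The plan is to prove \eqref{eq:low-generation} by a direct dyadic energy estimate at the fractional level \(s=1+\eps\), \emph{without} the weighted cancellation of Proposition~\ref{prop:spectral-admissible-endpoint}. The point is that at this low level the compact window \(K\) supplies \(L^\infty\) bounds for every coefficient. Since \(s>1\), \(H^s(\T^2)\) is an algebra, and the Moser bounds \eqref{eq:moser-product}--\eqref{eq:moser-composition} make every nonlinear term linear in \((\norm{h}_{H^s},\norm{Y}_{H^s})\). The key example is
\[
  \norm{\tau(a,\eta)Y}_{H^s}
  \le C_K\bigl(\norm{h}_{H^s}+\norm{Y}_{H^s}\bigr),
\]
obtained by writing \(\tau(a,\eta)Y=F(h,Y)\) with \(F\) smooth and \(F(0,0)=0\). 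Hence the elastic coupling needs no cancellation: it is absorbed by viscosity, which gains one derivative on \(u\).

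\emph{Velocity.} Apply \(\Delta_j\) to \eqref{eq:spectral-active-u}, test by \(2^{2sj}\Delta_ju\), and sum over \(j\). Lemma~\ref{lem:endpoint-transport} with \(\sigma=s\) bounds the transport commutator by \(C B(t)\norm{u}_{H^s}^2\), and the pressure drops out since \(\divv u=0\). For the elastic term I integrate by parts once:
\[
  \alpha\bigl|\langle \tau Y,\nabla u\rangle_{H^s}\bigr|
  \le C_K\bigl(\norm{h}_{H^s}+\norm{Y}_{H^s}\bigr)\norm{u}_{H^{s+1}}
  \le \delta\nu\norm{u}_{H^{s+1}}^2+C_\delta\calE_s^{\rm low}.
\]

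\emph{Scalar and deviatoric equations.} I treat \eqref{eq:spectral-active-a} and \eqref{eq:spectral-active-Y} the same way. The transport commutators again give \(CB(t)\calE_s^{\rm low}\) by Lemma~\ref{lem:endpoint-transport}. The source terms are estimated by the product estimate \eqref{eq:moser-product}, placing one factor in \(L^\infty\) via the window. For the stretching term,
\[
  \norm{aS(u)}_{H^s}
  \le C_K\norm{u}_{H^{s+1}}+C\norm{\nabla u}_{L^\infty}\norm{h}_{H^s},
\]
and similarly
\[
  \norm{[\nabla u\,Y+Y(\nabla u)^T]^\circ}_{H^s}+\norm{S(u):Y}_{H^s}
  \le C_K\norm{u}_{H^{s+1}}+C\norm{\nabla u}_{L^\infty}\norm{Y}_{H^s}.
\]
After pairing with \(\norm{Y}_{H^s}\) or \(\norm{h}_{H^s}\), Young's inequality absorbs \(\delta\nu\norm{u}_{H^{s+1}}^2\). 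The remaining coefficient is \(C_K(1+\norm{\nabla u}_{L^\infty})\le C_K(1+B(t))\). The relaxation terms \(\lambda^{-1}\mu Y\) and \(\lambda^{-1}G(a,\eta)\) are also linear in \(H^s\) by composition, since \(G(a_*,0)=0\).

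\emph{Dissipation of \(h\) and \(Y\).} This is the step I expect to be the obstacle. The fractional commutators \([\Lambda^s,\mu]Y\) and \([\Lambda^s,G_a]h\) give a clean lower bound \(\mu_0\norm{Y}_{H^s}^2\) only with coefficient losses that are quadratic in \(H^s\) (via fractional Leibniz with \(H^s\subset W^{1,p}\)). My first attempt is to keep these relaxation terms only as bounded contributions \(C_K\calE_s^{\rm low}\) on the right. Since the dissipation terms \(c\lambda^{-1}(\norm{h}_{H^s}^2+\norm{Y}_{H^s}^2)\) are bounded by \(C\calE_s^{\rm low}\), they can be added to the left-hand side at the cost of enlarging \(C_K\). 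This gives \eqref{eq:low-generation} as stated, with \(\sigma\) fixed small (no smallness is really needed here; it only keeps the energy uniform with the high-order one). This is harmless because the consequence only uses the Gronwall bound. If a genuinely coercive version were wanted, I would use the zero-order splitting from the proof of Proposition~\ref{prop:spectral-admissible-endpoint} together with the dyadic commutator \eqref{eq:lp-commutator-kernel}.

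\emph{Conclusion.} Gronwall gives
\[
  \sup_{t\le T}\calE_s^{\rm low}(t)\le \calE_s^{\rm low}(0)\exp\Bigl(C_K\bigl(T+\textstyle\int_0^TB\,dt\bigr)\Bigr)<\infty .
\]
Finally, \(\Log C=\Log(a_*I+hI+Y)\) is a smooth function of \((h,Y)\) on \(K\), so Moser composition yields
\[
  \norm{\Log C-\Log(a_*I)}_{H^s}\le C_K\bigl(\norm{h}_{H^s}+\norm{Y}_{H^s}\bigr).
\]
Hence \(\Log C\in L^\infty(0,T;H^{1+\eps})\subset L^2(0,T;H^{1+\eps})\) because \(T<\infty\).
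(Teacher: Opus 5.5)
Your skeleton is the paper's: a direct \(H^{1+\eps}\) estimate with no weighted cancellation, coefficient terms made linear in \(\norm{h}_{H^s}+\norm{Y}_{H^s}\) by compact-window Moser composition, elastic and stretching terms absorbed by viscosity, then Gronwall and composition for \(\Log C\).

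The genuine difference is the relaxation step.
\begin{itemize}
\item The paper asserts real damping. For \(Y\) it uses \(\mu\ge\mu_0\). For \(h\) it splits \(G=\partial_aG(a_*,0)h+\widetilde G(h,Y)\) and uses \(\partial_aG\ge\kappa\) with small \(\sigma\). It then reads the \(L^2_tH^s\) bound off that dissipation.
\item You keep \(\lambda^{-1}\mu Y\) and \(\lambda^{-1}G\) as linear right-hand-side terms and put the damping terms on the left by enlarging \(C_K\). This is legitimate, since those terms are at most \(c\lambda^{-1}\max(1,\sigma^{-1})\calE_s^{\rm low}\). You then get \(L^2_t\) from \(L^\infty_t\) and \(T<\infty\).
\end{itemize}
Your route needs none of \(\mu_0\), \(\kappa\), or small \(\sigma\). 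It also avoids extracting coercivity at a fractional level, where \(\Lambda^s(\mu Y)-\mu\Lambda^sY\) contains \(Y\Lambda^s\mu\)-type terms of the same order as the damping with no small factor; the paper's sketch passes over this. Honest dissipation would buy nothing here, because the right-hand side already contains the constant \(1\), so the Gronwall bound depends on \(T\) anyway.

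One step you share with the paper needs repair: Lemma~\ref{lem:endpoint-transport} fails at \(\sigma=1+\eps>1\) for general \(f\).
\begin{itemize}
\item Counterexample: take \(u=(0,N^{-1}\cos Nx_1)\) and \(f=\sin x_2+N^{-s}\cos(Nx_1)\cos x_2\). Then \(\norm{\nabla u}_{\Besov}\simeq1\) and \(\norm{f}_{H^s}\simeq1\).
\item Since \(\int\Delta_jf\,u\cdot\nabla\Delta_jf\,dx=0\), the dyadic commutator sum reduces to \(\sum_j2^{2sj}\int\Delta_jf\,\Delta_j(u\cdot\nabla f)\,dx\). This is \(\simeq N^{s-1}\), coming from pairing \(N^{-s}\cos(Nx_1)\cos x_2\) with \(u_2\partial_2\sin x_2\).
\item The culprit is the paraproduct in which high-frequency \(u\) multiplies low-frequency \(\nabla f\).
\end{itemize}
For \(f=u\) this piece is bounded by \(\norm{\nabla u}_{L^\infty}\norm{u}_{H^s}\), so your velocity step stands. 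For \(f=h,Y\), bound it instead by \(C\norm{f}_{L^\infty}\norm{u}_{H^{s+1}}\norm{f}_{H^s}\), using \(\norm{S_{k-1}\nabla f}_{L^\infty}\lesssim 2^k\norm{f}_{L^\infty}\). The window supplies the \(L^\infty\) bound, and you absorb the term by viscosity exactly as you did the elastic term. With this change the inequality holds as stated and the rest of your argument goes through.
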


\begin{proof}
At the fractional level \(s=1+\eps\), the transport terms are controlled by
the endpoint commutator estimate in Lemma~\ref{lem:endpoint-transport}, giving
the velocity clock \(B(t)\).  On the compact window, all coefficient functions
and their derivatives are bounded in \(L^\infty\).  The source \(S(u):Y\) in
the scalar equation and the stretching source \(2aS(u)\) in the trace-free
equation are controlled by Young's inequality and the viscous dissipation,
using the compact \(L^\infty\) bounds for \(a\) and \(Y\).  The positive
deviatoric relaxation \(\mu\ge\mu_0\) gives \(Y\)-damping.  The scalar
relaxation is split as
\[
  G(a,\eta)=\partial_aG(a_*,0)h+\widetilde G(h,Y),
\]
and the monotonicity \(\partial_aG\ge\kappa\), together with a small scalar
energy weight, gives \(h\)-damping.  The part of the smooth remainder coming
from the \(\eta\)-dependence is at least quadratic in \(Y\); after testing the
scalar equation by \(\sigma h\), it is absorbed by the \(Y\)-relaxation and
the scalar damping, up to the right-hand side of \eqref{eq:low-generation}.
Summing the velocity, scalar, and trace-free estimates gives
\eqref{eq:low-generation}.

Gronwall's inequality gives a uniform bound for \(\calE_s^{\rm low}\) and an
\(L^2_tH^s_x\) bound for \(h\) and \(Y\).  On \(K\), the map
\((h,Y)\mapsto\Log(aI+Y)\) is smooth on Sobolev spaces with \(s>1\).  Hence
\[
  \norm{\Log C}_{H^s}
  \le C_K\bigl(1+\norm{h}_{H^s}+\norm{Y}_{H^s}\bigr),
\]
which proves the stated logarithmic estimate.
\end{proof}

This also records the time exponent.  After the pressure renormalization and
weighted anisotropic cancellation, all remaining high-order remainders are
bounded by the square of \(\norm{\Log A}_{H^{1+\eps}}\).  The continuation
argument therefore needs the \(L^2_tH^{1+\eps}_x\) logarithmic estimate, rather than an external \(L^4_t\) logarithmic hypothesis.

\begin{theorem}[Endpoint Oldroyd--B continuation criterion]
\label{thm:continuation}
Let \(m\ge3\), \(0<\varepsilon<1\), and let \((u,A)\) be a strong positive-cone
solution with Sobolev index \(s=m\) on \([0,T)\).  If
\[
  \int_0^T\norm{\nabla u(t)}_{\Besov}\,dt<\infty,
  \label{eq:criterion}
\]
then \(A\) remains in a compact spectral window of the positive cone on
\([0,T)\), and the strong solution continues beyond \(T\).
\end{theorem}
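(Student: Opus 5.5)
The argument chains three earlier results in order: window propagation, the low-order logarithmic estimate, and the high-order active-deviatoric energy estimate. It closes with the local continuation principle of Proposition~\ref{prop:local-continuation}.

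\textbf{Step 1 (compact window).} The initial datum \(A_0\in H^m(\T^2;\Spp^2)\) is continuous on the compact torus, so its spectrum lies in some \([m_0,M_0]\) with \(0<m_0\le M_0<\infty\). Lemma~\ref{lem:oldroyd-spectral-window}, applied with the finite clock \(\int_0^T\norm{\nabla u}_{\Besov}\,dt\), gives constants \(m_T,M_T\) with \(m_TI\le A(t,x)\le M_TI\) on \([0,T)\). This proves the first assertion. It also bounds \(\norm{A}_{L^\infty}\), \(\norm{A^{-1}}_{L^\infty}\) and \(\norm{\Log A}_{L^\infty}\) uniformly, and gives \(a\ge m_T\).

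\textbf{Step 2 (logarithmic regularity).} Oldroyd--B is the scalar-Peterlin case \(\psi=\phi=\chi=1\). Its \((a,Y)\) form \eqref{eq:physical-u}--\eqref{eq:physical-Y} is therefore an instance of Definition~\ref{def:spectral-admissible}, with \(\tau=\mu=1\), \(G=a-1\), \(a_*=1\), \(\kappa=1\), on the window \(K=\{m_TI\le A\le M_TI\}\). Proposition~\ref{prop:compact-window-log-generation} applies on each \([0,T']\) with \(T'<T\). Gronwall with the integrable coefficient \(1+\norm{\nabla u}_{\Besov}\) gives a bound on \(\calE^{\rm low}_{1+\eps}\) and on \(\int_0^{T'}(\norm{h}_{H^{1+\eps}}^2+\norm{Y}_{H^{1+\eps}}^2)\,dt\) that is uniform in \(T'\). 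Here the only inputs are the data, \(T\), \(K\) and the clock. Composition on \(K\) then yields \(\Log A\in L^2(0,T;H^{1+\eps})\).

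One point needs care: the smooth-solution hypothesis of the propositions. At regularity \(H^m\) with \(m\ge3\), the energy identities are justified either by the local theory's regularization or by a standard mollification argument. I would remark on this rather than redo it.

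\textbf{Step 3 (high order and restart).} Apply Proposition~\ref{prop:physical-L2-estimate} with \(c_0=m_T\), \(C_0=M_T\). Its coefficient \(1+\norm{\nabla u}_{\Besov}+\norm{\Log A}_{H^{1+\eps}}^2\) lies in \(L^1(0,T)\) by the hypothesis and Step 2. Gronwall gives \(\sup_{t<T}\calE_m(t)<\infty\). On the window, \(\calE_m\) is equivalent to \(\norm{u}_{H^m}^2+\norm{a-1}_{H^m}^2+\norm{Y}_{H^m}^2\), and \(\norm{A}_{H^m}\lesssim 1+\norm{a-1}_{H^m}+\norm{Y}_{H^m}\) on \(\T^2\). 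Hence every quantity in \eqref{eq:local-continuation} is bounded on \([0,T)\), and Proposition~\ref{prop:local-continuation} extends the solution beyond \(T\).

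The main obstacle is Step 2. The window depends on the clock, and the clock depends on the solution. So I must check that the constants in Proposition~\ref{prop:compact-window-log-generation} depend only on \(K\). In particular, the source \(2aS(u)\) at level \(H^{1+\eps}\) must be absorbed by viscosity using only \(L^\infty\) bounds on \(a,Y\), and must not need a \(\norm{Y}_{H^{1+\eps}}^2\) coefficient; otherwise the low-order Gronwall would not close. If that absorption were delicate, my first fallback would be to reuse the weighted cancellation at the low level, testing with \(a^{-1}\Lambda^{s}Y\). The product \(a\,S(u)\) in \(H^{\eps}\) is only tame, with \(\norm{a}_{H^{1+\eps}}\norm{u}_{H^{1+\eps}}\), so it can be controlled through the \(L^\infty\) bound on \(\nabla u\) and Young's inequality.
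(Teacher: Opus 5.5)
Your proposal is correct and follows the paper's proof. You propagate the window with Lemma~\ref{lem:oldroyd-spectral-window}, obtain $\Log A\in L^2(0,T;H^{1+\eps})$ from Proposition~\ref{prop:compact-window-log-generation} on that window, apply Gronwall to Proposition~\ref{prop:physical-L2-estimate}, and restart with Proposition~\ref{prop:local-continuation}. Your added details (the identification $\tau=\mu=1$, $G=a-1$, $a_*=\kappa=1$, and the exhaustion by $[0,T']$) are consistent with the paper, and your Step~2 worry is harmless: the Moser bound $\norm{aS(u)}_{H^{1+\eps}}\lesssim\norm{a}_{L^\infty}\norm{u}_{H^{2+\eps}}+\norm{\nabla u}_{L^\infty}\norm{a}_{H^{1+\eps}}$ keeps the low-order inequality linear, so no weighted cancellation is needed there.
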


\begin{proof}
The criterion propagates a compact spectral window by
Lemma~\ref{lem:oldroyd-spectral-window}.  On this propagated window,
Proposition~\ref{prop:compact-window-log-generation} gives
\[
  \Log A\in L^2(0,T;H^{1+\eps}).
\]
Proposition~\ref{prop:physical-L2-estimate} gives
\[
  \frac{d}{dt}\calE_m(t)
  \le C\left(1+\norm{\nabla u(t)}_{\Besov}
  +\norm{\Log A(t)}_{H^{1+\eps}}^2\right)\calE_m(t),
\]
where the constant depends only on the propagated window and the fixed
parameters.  The coefficient is integrable by the velocity clock and by the
logarithmic bound.  Gronwall's inequality therefore gives
\[
  \sup_{t<T}\calE_m(t)<\infty .
\]
Since the spectral window is compact, the bound on \((a,Y)\) is equivalent to
an \(H^m\) bound on \(A\).  The local continuation principle,
Proposition~\ref{prop:local-continuation}, then extends the solution beyond
\(T\).
\end{proof}

\begin{corollary}[Oldroyd--B breakdown alternative]
\label{cor:breakdown-alternative}
Let \(T_*\) be the first breakdown time of a strong Oldroyd--B solution.  Then
the endpoint velocity clock diverges:
\[
  \int_0^{T_*}\norm{\nabla u(t)}_{\Besov}\,dt=\infty .
\]
Loss of a compact positive-cone spectral window is not a separate Oldroyd--B
alternative under finite endpoint velocity clock; it is ruled out by
Lemma~\ref{lem:oldroyd-spectral-window}.
\end{corollary}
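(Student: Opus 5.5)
The corollary follows from Theorem~\ref{thm:continuation} by contraposition, so I will argue by contradiction. Suppose \(T_*<\infty\) is the first breakdown time of a strong positive-cone solution \((u,A)\) with Sobolev index \(s=m\ge3\), and suppose that
\[
  \int_0^{T_*}\norm{\nabla u(t)}_{\Besov}\,dt<\infty .
\]
On every \([0,T]\) with \(T<T_*\) the solution is strong, and its initial spectrum lies in some \([m_0,M_0]\) with \(0<m_0\le M_0\). The data are in \(H^m\) with \(m\ge3\), so \(H^m\hookrightarrow L^\infty\), and \(A_0\) is positive definite on a compact torus.

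\emph{Step 1: the compact window holds up to \(T_*\).} Apply Lemma~\ref{lem:oldroyd-spectral-window} on \([0,T_*)\). It gives \(0<m_{T_*}I\le A\le M_{T_*}I\), with constants depending only on \(m_0\), \(M_0\), \(T_*\), \(\lambda\) and the finite clock. This already proves the second sentence of the corollary: under a finite clock, loss of the spectral window cannot happen.

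\emph{Step 2: the logarithmic bound and the high-order Gronwall bound.} On this fixed window, Proposition~\ref{prop:compact-window-log-generation} applies with \(G(a)=a-1\), \(\mu=\tau=1\) and \(a_*=1\). It gives
\[
  \Log A\in L^2(0,T_*;H^{1+\eps}).
\]
Proposition~\ref{prop:physical-L2-estimate} then yields a Gronwall inequality for \(\calE_m\), with coefficient
\[
  1+\norm{\nabla u}_{\Besov}+\norm{\Log A}_{H^{1+\eps}}^2 .
\]
This coefficient is integrable on \([0,T_*)\), so \(\sup_{t<T_*}\calE_m(t)<\infty\).

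\emph{Step 3: restart.} On the window, \(\calE_m\) controls \(\norm{u}_{H^m}+\norm{A}_{H^m}\), while Step 1 gives the bounds on \(\norm{A}_{L^\infty}\) and \(\norm{A^{-1}}_{L^\infty}\). The hypothesis \eqref{eq:local-continuation} of Proposition~\ref{prop:local-continuation} therefore holds, and the solution extends past \(T_*\). This contradicts the definition of \(T_*\), so the clock must diverge.

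This is exactly the argument of Theorem~\ref{thm:continuation} with \(T=T_*\). The only point needing care is uniformity: the constants in Steps 1 and 2 must depend on \(T_*\) and the clock, but not on the truncation time \(T<T_*\). Each cited result gives bounds of exactly that form, so the argument closes. If \(T_*=\infty\), there is nothing to prove.
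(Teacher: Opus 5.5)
Your proposal is correct and matches the paper's proof. The paper argues by contradiction directly from Theorem~\ref{thm:continuation}: a finite clock on \([0,T_*)\) would continue the solution past \(T_*\). You have simply written out the proof of that theorem (window propagation by Lemma~\ref{lem:oldroyd-spectral-window}, the logarithmic bound from Proposition~\ref{prop:compact-window-log-generation}, Gronwall via Proposition~\ref{prop:physical-L2-estimate}, and restart via Proposition~\ref{prop:local-continuation}), which adds no new idea but makes the uniformity in the truncation time explicit.
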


\begin{proof}
If the clock were finite, Theorem~\ref{thm:continuation} would continue the
solution beyond \(T_*\), a contradiction.
\end{proof}

\begin{corollary}[Coupled vorticity--stress form]
\label{cor:coupled}
At a finite Oldroyd--B breakdown time,
\[
  \int_0^{T_*}\norm{\omega(t)}_{B^0_{\infty,1}}\,dt=\infty .
\]
\end{corollary}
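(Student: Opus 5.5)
The plan is to argue by contradiction from Corollary~\ref{cor:breakdown-alternative}, converting the vorticity clock into the velocity-gradient clock by the periodic Biot--Savart bound stated at the beginning of Section~\ref{sec:criterion}:
\[
  \norm{\nabla u}_{B^0_{\infty,1}}
  \le C\left(\norm{\omega}_{B^0_{\infty,1}}+\norm{u}_{L^2}\right).
\]
Suppose that at the first breakdown time \(T_*<\infty\) we had \(\int_0^{T_*}\norm{\omega(t)}_{B^0_{\infty,1}}\,dt<\infty\). Integrating the displayed bound in time gives
\[
  \int_0^{T_*}\norm{\nabla u(t)}_{\Besov}\,dt
  \le C\int_0^{T_*}\norm{\omega(t)}_{B^0_{\infty,1}}\,dt
  +C\,T_*\sup_{t<T_*}\norm{u(t)}_{L^2}.
\]
So it remains to show that \(\norm{u(t)}_{L^2}\) is bounded on \([0,T_*)\).

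For this I will use the entropy identity, Proposition~\ref{prop:entropy}. On \([0,T_*)\) the solution is strong and positive-cone, so the identity holds. Its dissipation terms are nonnegative, since \(\tr(A+A^{-1}-2I)\ge0\) on \(\Spp^2\), and its elastic free-energy density \(\tr A-\log\det A-2\) is nonnegative. Hence
\[
  \tfrac12\norm{u(t)}_{L^2}^2\le\calE(t)\le\calE(0)<\infty
\]
for all \(t<T_*\). Here \(\calE(0)\) is finite because \(A_0\in H^s\) has spectrum in a compact subset of the positive cone, by the local theory of Proposition~\ref{prop:local-continuation}.

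Combining the two steps shows the endpoint velocity clock is finite on \([0,T_*)\). This contradicts Corollary~\ref{cor:breakdown-alternative}, equivalently Theorem~\ref{thm:continuation} applied with \(T=T_*\), which would continue the solution past \(T_*\).

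The only point needing care is justifying the entropy identity up to \(T_*\) for strong rather than smooth solutions. I would handle this by noting that on each \([0,T]\), \(T<T_*\), the solution has \(H^s\) regularity with \(s\ge3\) and \(A\) stays in a compact cone window by Lemma~\ref{lem:cone}. All the pairings used in the proof of Proposition~\ref{prop:entropy} are then classical, so the argument goes through. Everything else is a direct substitution.
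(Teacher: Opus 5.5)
Your proposal is correct and follows the paper's own proof. Both arguments use the periodic Biot--Savart bound to convert the vorticity clock into the endpoint velocity clock, use the entropy identity (Proposition~\ref{prop:entropy}) to get $u\in L^\infty_tL^2_x$ so that the $L^2$ term is harmless, and conclude by contradiction with Corollary~\ref{cor:breakdown-alternative}.
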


\begin{proof}
Use the periodic Biot--Savart estimate for the endpoint Besov norm and
Corollary~\ref{cor:breakdown-alternative}; the entropy identity gives
\(u\in L^\infty_tL^2_x\) on every smooth finite time interval, so the \(L^2\)
term in the Biot--Savart bound is harmless.
\end{proof}

\section{A Functional Obstruction at the Logarithmic Threshold}
\label{sec:no-go}

The endpoint theorem removes the compact spectral window as an independent
Oldroyd--B hypothesis and, through Proposition~\ref{prop:compact-window-log-generation},
provides the logarithmic differentiability needed in the high-order estimate.
The threshold is a real derivative threshold in the pressure-free stress map,
rather than a disguised entropy bound.  Entropy and relaxation control zeroth-order
functions of the eigenvalues of \(A\), and the endpoint velocity clock
propagates pointwise spectral bounds, but zero-order spectral information alone
does not control rapid oscillation of the active deviatoric stress.  The next
results record this obstruction first through bounded logarithmic amplitudes
and then through a fixed-spectrum eigenframe oscillation, where every spectral
density is exactly constant while the active deviatoric stress oscillates at
high frequency.

\begin{theorem}[High-frequency entropy-blind sequence]
\label{thm:no-go}
Let \(H\in\Sym^2\) be nonzero and trace free, and let
\[
  B_N(x)=\delta\sin(Nx_1)H,\qquad A_N=e^{B_N}.
\]
For each fixed \(\delta>0\), the fields \(A_N\) remain in a compact spectral
window independent of \(N\), and the entropy density
\[
  \tr A_N-\log\det A_N-2
\]
is bounded uniformly in \(N\) in every \(L^p\), \(1\le p<\infty\), while
\[
  \norm{B_N}_{H^{1+\varepsilon}}\to\infty
  \qquad\hbox{as }N\to\infty .
\]
\end{theorem}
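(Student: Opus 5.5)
The statement reduces to explicit computations with a scalar oscillating amplitude times a fixed trace-free matrix, so the plan is to diagonalize once and read everything off. Since \(H\) is symmetric and trace free, I will write \(H=Q\,\mathrm{diag}(h,-h)Q^T\) with \(Q\in O(2)\) fixed and \(h=|H|/\sqrt2>0\). Then
\[
  A_N=e^{B_N}=Q\,\mathrm{diag}\bigl(e^{\delta h\sin(Nx_1)},e^{-\delta h\sin(Nx_1)}\bigr)Q^T .
\]
The spectrum of \(A_N(x)\) therefore lies in \([e^{-\delta h},e^{\delta h}]\) for every \(x\) and every \(N\). This gives the compact spectral window independent of \(N\); equivalently, \(\norm{B_N}_{L^\infty}=\delta h\) uniformly in \(N\).

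Next comes the entropy density. Because \(\tr B_N=0\), we have \(\log\det A_N=\tr B_N=0\). Hence
\[
  \tr A_N-\log\det A_N-2=2\cosh\bigl(\delta h\sin(Nx_1)\bigr)-2,
\]
which takes values in \([0,2\cosh(\delta h)-2]\). This pointwise bound is independent of \(N\), and on the torus of finite measure it gives uniform bounds in every \(L^p\), \(1\le p<\infty\), and in fact in \(L^\infty\). The same remark applies to the relaxation density \(\tr(A_N+A_N^{-1}-2I)=4\cosh(\delta h\sin(Nx_1))-4\), if one wants to include it.

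For the divergence of \(\norm{B_N}_{H^{1+\varepsilon}}\), I will compute in Fourier variables. The function \(\sin(Nx_1)\) is supported on the two frequencies \(k=(\pm N,0)\), each with coefficient of modulus \(1/2\). Consequently
\[
  \norm{B_N}_{H^{1+\varepsilon}}^2\simeq\delta^2|H|^2(1+N^2)^{1+\varepsilon},
\]
so \(\norm{B_N}_{H^{1+\varepsilon}}\simeq\delta|H|N^{1+\varepsilon}\to\infty\). The elementary bound \(\norm{\partial_1B_N}_{L^2}\simeq\delta|H|N\) already diverges, which suffices because the \(H^1\) norm is dominated by the \(H^{1+\varepsilon}\) norm.

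None of these steps should be an obstacle. The only point needing care is the exact identity \(\log\det e^{B}=\tr B=0\), since it is what makes the entropy density a bounded function of \(\sin(Nx_1)\) alone. That identity holds because the trace-free assumption on \(H\) is preserved by the scalar multiplier.
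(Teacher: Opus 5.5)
Your proposal is correct and is essentially the paper's argument. A uniform bound on the spectrum of \(B_N\) gives the \(N\)-independent compact window for \(A_N\) and a pointwise (hence every \(L^p\)) bound on the entropy density, while the single-frequency profile \(\sin(Nx_1)\) gives \(\norm{B_N}_{H^{1+\varepsilon}}\sim N^{1+\varepsilon}\); your diagonalization and the identity \(\log\det e^{B_N}=\tr B_N=0\), which yields the closed form \(2\cosh(\delta h\sin(Nx_1))-2\), simply make the paper's one-line bounds explicit.
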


\begin{proof}
The spectrum of \(B_N\) is contained in a fixed compact interval depending only
on \(\delta\) and \(H\).  Therefore the eigenvalues of \(A_N=e^{B_N}\) remain in
a fixed compact subinterval of \((0,\infty)\), and the entropy density is
uniformly bounded.  The \(H^{1+\varepsilon}\) norm grows like
\(N^{1+\varepsilon}\).
\end{proof}

\begin{remark}[Interpretation]
Theorem~\ref{thm:no-go} does not construct a solution.  It shows that any
argument using only entropy, relaxation, and positivity cannot control the
high-frequency logarithmic term.  The continuation proof therefore needs the
flow-map clock, which propagates the compact window and yields the
logarithmic differentiability used by the high-order estimate.
\end{remark}

\begin{proposition}[Spectral energies are blind to the active high-frequency channel]
\label{prop:spectral-energy-active-blind}
Let \(T:\Spp^2\to\Sym^2\) be a smooth spectral isotropic stress law.  Fix
distinct eigenvalues \(\lambda_1\ne\lambda_2\) in a compact positive-cone
window and assume that the active coefficient in
\[
  T(C)=q_0(\tr C,\det C)I+q_1(\tr C,\det C)C
\]
satisfies \(q_1(\lambda_1+\lambda_2,\lambda_1\lambda_2)\ne0\).  Then there are
smooth fields \(C_N:\T^2\to\Spp^2\) such that every zero-order spectral density
\(E(\tr C_N,\det C_N)\) is independent of \(x\) and \(N\), while
\[
  \norm{\Log C_N}_{H^{1+\eps}}
  +\norm{T(C_N)^\circ}_{H^{1+\eps}}\to\infty .
\]
\end{proposition}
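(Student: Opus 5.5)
Fix the spectrum and rotate the eigenframe at high frequency. Let \(D=\mathrm{diag}(\lambda_1,\lambda_2)\), let \(R(\phi)\in SO(2)\) be rotation by angle \(\phi\), and set
\[
  C_N(x)=R(\phi_N(x))\,D\,R(\phi_N(x))^T,\qquad \phi_N(x)=\sin(Nx_1).
\]
Each \(C_N\) is smooth and periodic on \(\T^2\). It has eigenvalues \(\lambda_1,\lambda_2\) at every point, so it stays in the fixed compact window. Moreover \(\tr C_N=\lambda_1+\lambda_2\) and \(\det C_N=\lambda_1\lambda_2\) are constant. Hence every zero-order spectral density \(E(\tr C_N,\det C_N)\) is the same constant for all \(x\) and \(N\). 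This disposes of the first half of the claim with no estimates.

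For the divergence, we use the pressure quotient from Proposition~\ref{prop:active-deviatoric-projection}: \(T(C)^\circ=q_1Y\). Along \(C_N\) we have \(a=\frac12(\lambda_1+\lambda_2)\), and \(q_1=q_1(\lambda_1+\lambda_2,\lambda_1\lambda_2)=:\bar q_1\ne0\) is a constant. Thus
\[
  T(C_N)^\circ=\bar q_1 Y_N,\qquad
  Y_N=R(\phi_N)Y_0R(\phi_N)^T,\qquad
  Y_0=\tfrac{\lambda_1-\lambda_2}{2}\,\mathrm{diag}(1,-1).
\]
In components, \(Y_N=\frac{\lambda_1-\lambda_2}{2}\bigl(\begin{smallmatrix}\cos2\phi_N&\sin2\phi_N\\ \sin2\phi_N&-\cos2\phi_N\end{smallmatrix}\bigr)\). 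Similarly, \(\Log C_N=\frac{\log\lambda_1+\log\lambda_2}{2}I+\frac{\log\lambda_1-\log\lambda_2}{2}E(\phi_N)\), where \(E(\phi)\) is the same unit-type rotated trace-free matrix. Since \(\lambda_1\ne\lambda_2\), both coefficients \(\lambda_1-\lambda_2\) and \(\log\lambda_1-\log\lambda_2\) are nonzero. So it suffices to show that \(\norm{e^{2i\phi_N}}_{H^{1+\eps}}\to\infty\), i.e.\ that \(\cos(2\sin(Nx_1))\) or \(\sin(2\sin(Nx_1))\) is unbounded in \(H^{1+\eps}\).

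The main step is a lower bound on this \(H^{1+\eps}\) norm, and \(H^1\) already suffices, since \(\norm{\cdot}_{H^{1+\eps}}\ge\norm{\cdot}_{H^1}\) on the torus. We compute \(|\partial_{x_1}e^{2i\phi_N}|=2|\phi_N'|=2N|\cos(Nx_1)|\). Therefore
\[
  \norm{\nabla e^{2i\phi_N}}_{L^2(\T^2)}^2=4N^2\int_{\T^2}\cos^2(Nx_1)\,dx=2N^2|\T^2|.
\]
Because \(|\nabla Y_N|^2\) is a fixed positive multiple of \(|\nabla e^{2i\phi_N}|^2\) (the matrix entries are \(\cos\) and \(\sin\) of the same phase), we get
\[
  \norm{T(C_N)^\circ}_{H^{1+\eps}}\ge c|\bar q_1||\lambda_1-\lambda_2|N\to\infty.
\]
The same argument with coefficient \(|\log\lambda_1-\log\lambda_2|\) gives \(\norm{\Log C_N}_{H^{1+\eps}}\ge cN\), though either term alone suffices.

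The only delicate point is to keep the phase amplitude fixed (here amplitude 1, so \(2\phi_N\) is not a multiple of \(\pi\)). This ensures the rotated frame genuinely oscillates rather than collapsing, which is guaranteed by the explicit gradient computation above. Smoothness of \(T\) enters only through the constancy of \(q_1\) along the fixed-spectrum family.
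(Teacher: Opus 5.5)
Your proof is correct and follows the paper's argument: fix the spectrum, rotate the eigenframe at high frequency, and use that $q_1$ is constant on the orbit, so that both $\Log C_N$ and $T(C_N)^\circ=\bar q_1 C_N^\circ$ are nonzero multiples of a rapidly rotating trace-free matrix. The only difference is cosmetic. The paper uses the uniform rotation $R_{Nx_1}$, which makes the trace-free parts pure frequency-$2N$ modes with $H^{1+\eps}$ norm $\gtrsim N^{1+\eps}$; your bounded phase $\sin(Nx_1)$ leads you instead to the $H^1$ lower bound of order $N$, which is still sufficient.
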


\begin{proof}
Let
\[
  \Lambda=\begin{pmatrix}\lambda_1&0\\0&\lambda_2\end{pmatrix},
  \qquad
  R_\vartheta=
  \begin{pmatrix}
    \cos\vartheta&-\sin\vartheta\\
    \sin\vartheta&\cos\vartheta
  \end{pmatrix},
\]
and set \(C_N(x)=R_{Nx_1}\Lambda R_{Nx_1}^T\).  The eigenvalues of \(C_N\) are
constant.  Hence \(\tr C_N=\lambda_1+\lambda_2\), \(\det C_N=\lambda_1\lambda_2\),
and every zero-order spectral density is constant.

On the other hand,
\[
  \Log C_N
  =R_{Nx_1}(\Log\Lambda)R_{Nx_1}^T
\]
has a nonzero oscillatory trace-free part at frequency \(2N\), because
\(\lambda_1\ne\lambda_2\).  Therefore
\[
  \norm{\Log C_N}_{H^{1+\eps}}\gtrsim N^{1+\eps}.
\]
Moreover
\[
  T(C_N)^\circ
  =
  q_1(\lambda_1+\lambda_2,\lambda_1\lambda_2)\,C_N^\circ ,
\]
and \(C_N^\circ\) has the same frequency \(2N\).  Since the displayed active
coefficient is nonzero,
\[
  \norm{T(C_N)^\circ}_{H^{1+\eps}}\gtrsim N^{1+\eps}.
\]
\end{proof}

\begin{theorem}[Static active-forcing obstruction]
\label{thm:active-threshold}
Let \(T:\Spp^2\to\Sym^2\) be a smooth spectral isotropic stress law and let
\(K\Subset\Spp^2\).  Suppose \(K\) contains the fixed spectral orbit of an
anisotropic state
\[
  C_*=aI+rE_1\in K,\qquad
  E_1=\begin{pmatrix}1&0\\0&-1\end{pmatrix},\qquad r>0,
\]
such that the active coefficient in
\[
  T(C)=q_0(\tr C,\det C)I+q_1(\tr C,\det C)C
\]
satisfies \(q_1(2a,a^2-r^2)r\ne0\).  Let \(0\le\gamma<1\).  For every
\(0<\sigma<1+\gamma\), there exist smooth fields \(C_N:\T^2\to K\) with fixed
eigenvalues \(a+r\) and \(a-r\) such that every zeroth-order spectral density
is pointwise independent of \(N\),
\[
  \sup_N\norm{\Log C_N}_{H^\sigma}<\infty,
\]
but
\[
  \norm{\curl\divv T(C_N)}_{H^{-1+\gamma}}\to\infty .
\]
Conversely, for every smooth \(C:\T^2\to K\),
\[
  \norm{\curl\divv T(C)}_{H^{-1+\gamma}}
  \le C_K\bigl(1+\norm{\Log C}_{H^{1+\gamma}}\bigr).
\]
Thus this static pressure-free stress estimate cannot be formulated below the
\(H^{1+\gamma}\) logarithmic threshold without adding further dynamical
information or a different structure.
\end{theorem}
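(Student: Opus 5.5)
The plan is to handle the two halves separately. For the lower bound I will rotate the eigenframe of \(C_*\) by a small-amplitude, high-frequency angle. For the converse I will use Moser calculus in the logarithmic chart.

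\emph{Construction.} Let \(E_2=\begin{pmatrix}0&1\\1&0\end{pmatrix}\), and set
\[
  \theta_N(x)=\eps_N\sin(Nx_1),\qquad \eps_N=N^{-\sigma},\qquad C_N(x)=R_{\theta_N(x)}C_*R_{\theta_N(x)}^T .
\]
Every \(C_N(x)\) lies on the spectral orbit of \(C_*\), so \(C_N\) takes values in \(K\). Its eigenvalues are \(a\pm r\), and every zeroth-order spectral density is the same constant as for \(C_*\). Since \(R_\theta E_1R_\theta^T=\cos2\theta\,E_1+\sin2\theta\,E_2\), Proposition~\ref{prop:active-deviatoric-projection} gives
\[
  \Log C_N=\tfrac12\log(a^2-r^2)\,I+\ell\,(\cos2\theta_N E_1+\sin2\theta_N E_2),\qquad \ell=\tfrac12\log\tfrac{a+r}{a-r},
\]
\[
  T(C_N)^\circ=q_*r\,(\cos2\theta_N E_1+\sin2\theta_N E_2),\qquad q_*=q_1(2a,a^2-r^2).
\]
These are exact identities with constant scalar coefficients, because the spectrum is frozen.

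\emph{Fourier computation.} All fields depend on \(x_1\) only. For \(f=f(x_1)\) we have \(\divv(fE_1)=(f',0)\), whose curl vanishes, and \(\divv(fE_2)=(0,f')\), whose curl is \(f''\). The isotropic part of \(T\) is constant, so
\[
  \curl\divv T(C_N)=q_*r\,\partial_1^2\sin(2\eps_N\sin Nx_1).
\]
The Jacobi--Anger expansion gives
\[
  \sin(2\eps\sin Nx_1)=2\sum_{k\ \mathrm{odd}}J_k(2\eps)\sin(kNx_1),\qquad
  \cos(2\eps\sin Nx_1)=J_0(2\eps)+2\sum_{k\ \mathrm{even}\ge2}J_k(2\eps)\cos(kNx_1),
\]
with \(|J_k(2\eps)|\le\eps^k/k!\) and \(J_1(2\eps)\ge\eps/2\) for small \(\eps\).
\begin{itemize}
\item \emph{Lower bound.} Keeping only the mode \(k=1\),
\[
  \norm{\curl\divv T(C_N)}_{H^{-1+\gamma}}\gtrsim |q_*r|\,N^{2}N^{-1+\gamma}\eps_N=|q_*r|\,N^{1+\gamma-\sigma}\to\infty,
\]
since \(\sigma<1+\gamma\).
\item \emph{Upper bound.} Summing the modes \(k\ge1\),
\[
  \norm{\Log C_N}_{H^\sigma}\lesssim 1+\sum_{k\ge1}(kN)^\sigma\eps_N^k/k!\lesssim 1+N^\sigma\eps_N=O(1).
\]
\end{itemize}
The main obstacle I expected was controlling the nonlinear dependence of \(\cos2\theta,\sin2\theta\) on \(\theta\) in fractional norms. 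The explicit Bessel expansion sidesteps this and replaces any composition estimate in \(H^\sigma\).

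\emph{Converse.} Since \(\curl\divv\) has order two and \(\curl\nabla=0\),
\[
  \norm{\curl\divv T(C)}_{H^{-1+\gamma}}\le C\norm{T(C)^\circ}_{H^{1+\gamma}} .
\]
Write \(B=\Log C\). On \(K\) we have \(\norm{B}_{L^\infty}\le K'\), and \(B\mapsto T(e^B)^\circ\) is smooth on the compact image of \(\Log K\). Subtract the constant value \(T(I)^\circ\) and apply the composition bound \eqref{eq:moser-composition} of Lemma~\ref{lem:moser}, together with Lemma~\ref{lem:exp-calculus}, to the smooth map \(B\mapsto T(e^B)^\circ-T(I)^\circ\), which vanishes at \(0\). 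For \(\gamma>0\) this gives
\[
  \norm{T(C)^\circ}_{H^{1+\gamma}}\le C_K\bigl(1+\norm{B}_{H^{1+\gamma}}\bigr).
\]
For \(\gamma=0\) the same bound in \(H^1\) follows directly from the chain rule and the \(L^\infty\) bound on \(DF\) over \(K\). Combining the two estimates gives the stated converse inequality.
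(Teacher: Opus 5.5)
Your proposal is correct and follows the paper's proof essentially step for step. It uses the same frozen-spectrum eigenframe rotation \(\theta_N=N^{-\sigma}\sin(Nx_1)\), the same reduction \(\curl\divv T(C_N)=\partial_1^2\bigl(T(C_N)^\circ_{12}\bigr)\) giving the \(N^{1+\gamma-\sigma}\) lower bound, and the same Moser-composition converse. Beyond that, your Jacobi--Anger expansion makes rigorous two steps the paper only sketches: the uniform \(H^\sigma\) bound on \(\Log C_N\), and the handling of the nonlinear remainder in \(\sin 2\theta_N\), which you avoid by Fourier orthogonality. Your separate chain-rule argument for \(\gamma=0\) also correctly covers the endpoint where the paper's composition lemma, stated only for \(r>1\), does not directly apply.
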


\begin{proof}
Let
\[
  R_\theta=\begin{pmatrix}\cos\theta&-\sin\theta\\
  \sin\theta&\cos\theta\end{pmatrix},\qquad
  \theta_N=N^{-\sigma}\sin(Nx_1),
\]
and set
\[
  C_N=aI+rR_{\theta_N}E_1R_{\theta_N}^T .
\]
By the orbit assumption, \(C_N\in K\) for all \(N\).  The eigenvalues of
\(C_N\) are exactly \(a+r\) and \(a-r\), so every zeroth-order spectral density
is pointwise constant.  Since
\(\theta\mapsto\Log(aI+rR_\theta E_1R_\theta^T)\) is smooth and
\(\theta_N\) has amplitude \(N^{-\sigma}\) and frequency \(N\), the Sobolev
bound \(\sup_N\norm{\Log C_N}_{H^\sigma}<\infty\) follows.

On this fixed spectral orbit,
\[
  T(C_N)^\circ=q_1(2a,a^2-r^2)\,rR_{\theta_N}E_1R_{\theta_N}^T .
\]
The \(12\)-entry is
\[
  q_1(2a,a^2-r^2)\,r\sin(2\theta_N)
  =2q_1(2a,a^2-r^2)rN^{-\sigma}\sin(Nx_1)+O(N^{-2\sigma}).
\]
Since \(C_N\) depends only on \(x_1\),
\[
  \curl\divv T(C_N)
  =\partial_{11}\bigl(T(C_N)^\circ_{12}\bigr),
\]
and therefore
\[
  \norm{\curl\divv T(C_N)}_{H^{-1+\gamma}}
  \gtrsim N^{1+\gamma-\sigma}\to\infty .
\]

For the converse estimate, the pressure part of \(T(C)\) has zero curl-divergence.
On \(K\), the map \(\Log C\mapsto T(C)^\circ\) is smooth on Sobolev spaces, so
\[
  \norm{T(C)^\circ}_{H^{1+\gamma}}
  \le C_K\bigl(1+\norm{\Log C}_{H^{1+\gamma}}\bigr).
\]
Since \(\curl\divv\) is a second-order operator, it maps
\(H^{1+\gamma}\) to \(H^{-1+\gamma}\), which proves the endpoint bound.
\end{proof}

\begin{remark}[Static obstruction versus dynamic optimality]
Theorem~\ref{thm:active-threshold} is an operator-level statement.  The
sequence \(C_N\) is static, lies on a fixed compact spectral orbit, and keeps
all zero-order spectral densities constant.  It shows that the pressure-free
stress map cannot be controlled below the displayed logarithmic Sobolev
threshold by entropy, trace, determinant, or finite-extensibility information
alone.  It does not assert that a strong solution whose logarithmic regularity
falls below this threshold must blow up in finite time.  Dynamic optimality would
require a separate instability or blow-up construction for the evolution.
\end{remark}

\begin{proposition}[Relaxation does not give a derivative]
\label{prop:relax-no-derivative}
Let \(B_N,A_N\) be as in Theorem~\ref{thm:no-go}.  Then the relaxation
density
\[
  \tr(A_N+A_N^{-1}-2\Id)
\]
is uniformly bounded in every \(L^p\), \(1\le p<\infty\), while
\(\norm{B_N}_{H^{1+\eps}}\to\infty\).  In particular, the entropy dissipation
term in \eqref{eq:entropy} cannot control the logarithmic concentration
appearing in Theorem~\ref{thm:continuation}.
\end{proposition}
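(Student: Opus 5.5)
The plan is to reuse the sequence \(B_N=\delta\sin(Nx_1)H\), \(A_N=e^{B_N}\) from Theorem~\ref{thm:no-go} and treat the relaxation density exactly as the entropy density was treated there: as a fixed continuous spectral function evaluated on a compact window.

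First, the spectral bound. The matrix \(H\) is symmetric, trace free and nonzero, so its eigenvalues are \(\pm h\) with \(h=|H|/\sqrt2>0\). Since \(B_N(x)\) is a scalar multiple of the fixed matrix \(H\), its eigenvalues at each point are \(\pm\delta h\sin(Nx_1)\in[-\delta h,\delta h]\). Hence the eigenvalues of \(A_N=e^{B_N}\) lie in \([e^{-\delta h},e^{\delta h}]\), uniformly in \(x\) and \(N\).

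Second, the bound on the relaxation density. The density \(\tr(A+A^{-1}-2\Id)=\sum_i(\mu_i+\mu_i^{-1}-2)\) is a continuous function of the eigenvalues \(\mu_i\). On the interval above each summand is at most \(e^{\delta h}+e^{-\delta h}-2\). So the density is bounded pointwise by \(2(\cosh(\delta h)-1)\cdot 2\), a constant independent of \(N\). Because the torus has finite measure, every \(L^p\) norm with \(1\le p<\infty\) (indeed \(p=\infty\) as well) is uniformly bounded.

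Third, the growth of the logarithm. We have \(B_N=\delta\sin(Nx_1)H\), which is a single Fourier mode at frequency \(N\) with nonzero coefficient \(\delta H\). Therefore \(\norm{B_N}_{H^{1+\eps}}\simeq\delta|H|(1+N^2)^{(1+\eps)/2}\to\infty\), as in Theorem~\ref{thm:no-go}.

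For the final sentence, the entropy dissipation in \eqref{eq:entropy} integrates exactly this density with the fixed factor \(\alpha/(2\lambda)\). The sequence therefore has uniformly bounded dissipation but unbounded \(\norm{\Log A_N}_{H^{1+\eps}}\). Consequently no bound of the form \(\norm{\Log A}_{H^{1+\eps}}\le F(\text{dissipation})\) can hold, and the logarithmic coefficient in Theorem~\ref{thm:continuation} cannot be supplied by relaxation.

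There is no genuine obstacle. The only point needing care is the reduction of a matrix function to scalar bounds on eigenvalues, which the commuting structure \(B_N\propto H\) makes immediate.
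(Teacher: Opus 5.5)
Your proposal is correct and matches the paper's proof. Both confine the spectrum of \(A_N\) (and hence of \(A_N^{-1}\)) to a fixed compact interval depending only on \(\delta\) and \(H\), deduce a uniform pointwise bound on the relaxation density, and reuse the \(N^{1+\eps}\) Sobolev growth of \(B_N\) from Theorem~\ref{thm:no-go}. Your explicit computation (eigenvalues \(\pm\delta h\sin(Nx_1)\), density equal to \(4(\cosh(\delta h\sin(Nx_1))-1)\)) just makes the paper's one-line argument quantitative. The only nit is that \(\sin(Nx_1)\) consists of the two modes \(\pm N\) rather than one, which does not affect the norm comparison.
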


\begin{proof}
The eigenvalues of \(A_N\) and \(A_N^{-1}\) remain in a compact interval
depending only on \(\delta\) and \(H\).  Hence the relaxation density is
uniformly bounded pointwise.  The Sobolev growth of \(B_N\) is unchanged from
Theorem~\ref{thm:no-go}.
\end{proof}

\begin{remark}[Why the physical energy matters]
The logarithmic coordinate prevents loss of positive definiteness and gives an
elliptic chart on the cone.  Lemma~\ref{lem:oldroyd-spectral-window} shows that
finite endpoint velocity clock already prevents loss of the compact cone window
in Oldroyd--B.  What remains is a genuinely high-frequency obstruction: the
logarithmic chart does not by itself add smoothing.  A purely
logarithmic high-order energy estimates the stress force by absorption and
naturally produces a fourth power of \(\norm{\Log A}_{H^{1+\eps}}\).  The
physical decomposition \(A=aI+Y\) exposes the cancellation between
\(\alpha\divv Y\) and \(2aS(u)\), which is why Theorem~\ref{thm:continuation}
closes at the \(L^2_tH^{1+\eps}_x\) level.
\end{remark}

\section{The finite-extensibility barrier for FENE-P}
\label{sec:fene-barrier}

This section is two-dimensional: \(b>2\), \(f_b=f_{b,2}\), and
\(\calD_b=\calD_{b,2}\).  The three-dimensional FENE-P normalization is recorded
separately in Section~\ref{sec:three-dimensional}.

We now record the part of the argument which is not present in Oldroyd--B.  In
the FENE-P system the conformation tensor must remain in
\[
  \calD_b=\calD_{b,2}=\{C\in\Spp^2:\tr C<b\}.
\]
The logarithmic conformation \(B=\Log C\) still controls the lower spectral
boundary of the positive cone, but it does not control the upper trace
boundary.  The missing coordinate is
\[
  \phi_b(C)=-\log(b-\tr C).
\]
The role of this section is to show that this boundary is propagated by the
squared endpoint velocity clock and that the high-order estimate closes in the
same pressure-free active-deviatoric variables.

\begin{definition}[Compact FENE window]
For fixed \(b>2\), a FENE-P solution lies in a compact FENE window on \([0,T]\) if there exist
constants \(0<c_0<c_1<\infty\) and \(\delta>0\) such that
\[
  c_0\Id\le C(t,x)\le c_1\Id,\qquad \tr C(t,x)\le b-\delta
\]
for all \((t,x)\in[0,T]\times\T^2\).
\end{definition}

\begin{lemma}[FENE window recovery]
\label{lem:fene-window-recovery}
Let \(C(x)\in\calD_b\).  If
\[
  \norm{\Log C}_{L^\infty}\le M,\qquad
  \norm{\phi_b(C)}_{L^\infty}\le M_b,
\]
then
\[
  e^{-M}\Id\le C\le e^M\Id,\qquad \tr C\le b-e^{-M_b}.
\]
In particular the logarithmic field and the barrier field recover a compact
FENE window.
\end{lemma}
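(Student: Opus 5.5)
The argument is pointwise in \(x\), so it reduces to two scalar facts about a single matrix \(C=C(x)\in\calD_b\). For the spectral bounds, I would use the spectral theorem for the symmetric matrix \(\Log C\). Write \(\Log C=Q\,\mathrm{diag}(\ell_1,\ell_2)Q^T\) with \(Q\in O(2)\), so that \(C=Q\,\mathrm{diag}(e^{\ell_1},e^{\ell_2})Q^T\). The operator norm of a symmetric matrix equals its largest absolute eigenvalue, so \(|\ell_i|\le\norm{\Log C}_{L^\infty}\le M\). Here the \(L^\infty\) norm is read pointwise through any matrix norm dominating the spectral radius; the Frobenius norm is fine. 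It follows that \(e^{-M}\le e^{\ell_i}\le e^{M}\), which is exactly \(e^{-M}\Id\le C\le e^M\Id\) in the sense of quadratic forms.

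For the trace gap, I would invert the barrier coordinate. Since \(C\in\calD_b\), we have \(b-\tr C>0\), so \(\phi_b(C)=-\log(b-\tr C)\) is well defined. The hypothesis \(|\phi_b(C)|\le M_b\) gives \(\phi_b(C)\le M_b\). Because the logarithm is monotone, this means \(b-\tr C\ge e^{-M_b}\), that is, \(\tr C\le b-e^{-M_b}\). Only the one-sided bound on \(\phi_b\) is actually used here. The other side would give \(\tr C\ge b-e^{M_b}\), which is not needed.

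To conclude, set \(c_0=e^{-M}\), \(c_1=e^{M}\) and \(\delta=e^{-M_b}\). These satisfy the definition of a compact FENE window. If \(M=0\) forces \(c_0=c_1\), one simply enlarges \(c_1\) slightly to keep the strict inequality \(c_0<c_1\) required in the definition. The resulting set \(\{e^{-M}\Id\le C\le e^M\Id,\ \tr C\le b-e^{-M_b}\}\) is closed and bounded, and it sits inside \(\calD_b\). Hence it is a compact subset \(K\Subset\calD_b\) of the kind required in Proposition~\ref{prop:fene-local-continuation}.

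No step is a real obstacle. The only point needing care is the matrix norm convention in \(\norm{\Log C}_{L^\infty}\): whichever norm is used must bound the eigenvalues of \(\Log C\). This holds for the operator and Frobenius norms, and for the entrywise max norm after adjusting \(M\) by a dimensional factor of \(2\).
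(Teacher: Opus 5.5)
Your proposal is correct and follows the paper's route: the spectral bounds come from exponentiating the eigenvalues of \(\Log C\), and the trace gap comes from inverting \(\phi_b(C)=-\log(b-\tr C)\le M_b\). Your extra remarks on the matrix-norm convention and on keeping \(c_0<c_1\) add care but do not change the argument.
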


\begin{proof}
The eigenvalues of \(C\) are exponentials of the eigenvalues of \(\Log C\),
which gives the spectral bounds.  The barrier bound gives
\(-\log(b-\tr C)\le M_b\), hence \(b-\tr C\ge e^{-M_b}\).
\end{proof}

\begin{lemma}[Smooth coordinates on compact FENE windows]
\label{lem:fene-coordinate-equivalence}
Let \(s>1\).  On every compact FENE window \(K\Subset\calD_b\), the quantities
\[
  \norm{C}_{H^s},\qquad
  \norm{\Log C}_{H^s},\qquad
  \norm{T_b(C)}_{H^s}
\]
are mutually controlled, with constants depending on \(K,s,b\).  Moreover
\[
  \norm{f_b(C)}_{H^s}
  \le C_{K,s,b}\bigl(1+\norm{\phi_b(C)}_{H^s}\bigr).
\]
\end{lemma}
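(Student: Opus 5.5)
The plan is to treat all three quantities as smooth images of one another under maps defined on a compact set. On a compact FENE window \(K\Subset\calD_b\), the spectrum of \(C\) lies in \([c_0,c_1]\) with \(c_0>0\), and \(b-\tr C\ge\delta>0\). Hence the three maps
\[
  C\mapsto\Log C,\qquad B\mapsto e^B,\qquad C\mapsto T_b(C)=f_b(C)C-\Id
\]
are smooth, with all derivatives bounded, on a neighbourhood of the relevant compact sets: \(K\) itself and its image \(\Log K\). I then apply the Moser composition estimate of Lemma~\ref{lem:moser} with \(r=s>1\). Since the maps do not vanish at the origin, I apply it after subtracting a constant, i.e. to \(F(v)-F(0)\) with \(v\) measured from a reference point. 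This gives affine bounds of the form \(\norm{F(v)}_{H^s}\le C_{K,s,b}(1+\norm{v}_{H^s})\). That is the sense of ``mutually controlled'' I intend, matching the form of Lemma~\ref{lem:exp-calculus}.

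The steps in order are as follows.

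First, \(\norm{\Log C}_{H^s}\lesssim 1+\norm{C}_{H^s}\) and \(\norm{C}_{H^s}\lesssim1+\norm{\Log C}_{H^s}\). The second is exactly Lemma~\ref{lem:exp-calculus}, since \(\norm{\Log C}_{L^\infty}\le\max(|\log c_0|,|\log c_1|)\). The first follows from the Moser composition for the smooth map \(\Log\) on the compact spectral set \([c_0,c_1]\), using the holomorphic functional calculus \(\Log C=\frac{1}{2\pi i}\oint\log z\,(z-C)^{-1}dz\) to see smoothness in the entries of \(C\).

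Second, \(\norm{T_b(C)}_{H^s}\lesssim1+\norm{C}_{H^s}\), since \(f_b(C)=(b-2)/(b-\tr C)\) is smooth on \(\{\tr C\le b-\delta\}\).

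Third, the converse \(\norm{C}_{H^s}\lesssim1+\norm{T_b(C)}_{H^s}\) needs the inverse map. Taking traces, \(\tr T_b=f_b\tr C-2=(b-2)r/(b-r)-2\) with \(r=\tr C\). This is a smooth strictly increasing function of \(r\in(0,b)\), with derivative \((b-2)b/(b-r)^2>0\). So \(r\) is a smooth function of \(\tr T_b\) on the compact image of \(K\), and then
\[
  C=(T_b+\Id)/f_b(r)
\]
is a smooth function of \(T_b\) on a compact set. Moser applies again.

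Finally, \(f_b(C)=(b-2)e^{\phi_b(C)}\) is a smooth function of \(\phi_b\) on the bounded range \(\phi_b\in[-\log b,-\log\delta]\). Lemma~\ref{lem:moser} then gives \(\norm{f_b(C)}_{H^s}\le C(1+\norm{\phi_b}_{H^s})\).

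The only real obstacle is bookkeeping in the Moser composition. First, the \(L^\infty\) bound must be on the argument field, and it is supplied by the window via Lemma~\ref{lem:fene-window-recovery}. Second, the maps must be smooth on a convex neighbourhood, because the proof writes \(F(v)-F(v_0)=\int_0^1DF(v_0+\theta(v-v_0))(v-v_0)\,d\theta\). I would handle the convexity issue by extending each map smoothly outside a neighbourhood of the compact set; the window is convex in \(C\), and its images are compact. The \(L^\infty\)-affine constant is then absorbed into the ``\(1+\)''. On \(\T^2\), constants belong to \(H^s\), so subtracting the reference value costs only a \(K\)-dependent constant.
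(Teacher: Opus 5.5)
Your proposal is correct and follows the paper's route: the paper also gets the mutual control from the Sobolev composition theorem on compact subsets of \(\calD_b\), and the spring-factor bound from \(f_b(C)=(b-2)e^{\phi_b(C)}\) together with the window's \(L^\infty\) bound. You make explicit what the paper leaves implicit: the smooth inverse of \(C\mapsto T_b(C)\) via the strictly increasing trace relation, the affine correction for \(F(0)\ne0\), and the smooth extension off the compact range. One small correction: the \(L^\infty\) bounds on the argument fields come directly from the window itself, not from Lemma~\ref{lem:fene-window-recovery}, which runs in the converse direction.
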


\begin{proof}
The first statement follows from the Sobolev composition theorem on compact
subsets of \(\calD_b\).  For the spring factor,
\[
  f_b(C)=\frac{b-2}{b-\tr C}=(b-2)e^{\phi_b(C)}.
\]
The \(L^\infty\) bound supplied by the compact window and the Moser
composition estimate give the displayed inequality.
\end{proof}

The FENE-P free energy is
\[
  \calF_b(C)=
  -(b-2)\log\left(1-\frac{\tr C}{b}\right)-\log\det C .
\]
It contains both boundaries of \(\calD_b\): the logarithmic determinant sees
loss of positive definiteness, while the first term sees the trace boundary.
This zeroth-order information is important, but it is not the same as the
pointwise barrier propagation and derivative control needed for continuation.

\begin{proposition}[FENE-P free-energy identity]
\label{prop:fene-free-energy-unified}
For smooth FENE-P solutions with \(C\in\calD_b\),
\begin{align}
  &\frac{d}{dt}\left[
    \frac12\norm{u}_{L^2}^2
    +\frac{\alpha}{2}
      \int_{\T^2}\calF_b(C)\dd
  \right]
  +\nu\norm{\nabla u}_{L^2}^2
  +\frac{\alpha}{2\lambda}
    \int_{\T^2}\calD_b^{\rm rel}(C)\dd
  =0,
  \label{eq:fene-free-energy-unified}
\end{align}
where \(\calD_b^{\rm rel}(C)\ge0\) is the FENE-P relaxation dissipation.
\end{proposition}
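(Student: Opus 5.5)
The plan is to repeat the Oldroyd--B entropy computation of Proposition~\ref{prop:entropy}. We test the momentum equation \eqref{eq:fene-momentum} with \(u\) and the conformation equation \eqref{eq:fene-conformation} with \(\frac{\alpha}{2}\partial_C\calF_b(C)\). Since \(C(t,x)\in\calD_b\) and the solution is smooth, \(\calF_b(C)\) and its derivative are smooth along the solution, so all pointwise manipulations are legitimate.

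\textbf{Derivative of the free energy.} We have
\[
  \partial_C\calF_b(C)=\frac{b-2}{b-\tr C}\,\Id-C^{-1}=f_b(C)\Id-C^{-1}.
\]

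\textbf{Momentum equation.} Testing \eqref{eq:fene-momentum} by \(u\) and using \(\divv u=0\) (the pressure and transport terms drop out) gives
\[
  \tfrac12\tfrac{d}{dt}\norm{u}_{L^2}^2+\nu\norm{\nabla u}_{L^2}^2
  =-\alpha\int T_b(C):\nabla u\,dx=-\alpha\int f_bC:\nabla u\,dx .
\]
Here \(\Id:\nabla u=\divv u=0\).

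\textbf{Conformation equation.} We use \(\partial_t\calF_b(C)+u\cdot\nabla\calF_b(C)=\partial_C\calF_b:D_tC\). The transport term integrates to zero because \(\divv u=0\). The stretching contribution is
\[
  (f_b\Id-C^{-1}):(\nabla u\,C+C\nabla u^T)=2f_bC:\nabla u-2\divv u=2f_bC:\nabla u ,
\]
where the second term is evaluated by cyclicity of the trace. After multiplication by \(\alpha/2\), this cancels exactly against the elastic term from the momentum equation. This is the same cancellation that occurs in Proposition~\ref{prop:entropy}, now with the spring factor attached.

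\textbf{Relaxation term.} It leaves \(-\lambda^{-1}(f_b\Id-C^{-1}):(f_bC-\Id)\). Expanding in two dimensions gives
\[
  \calD_b^{\rm rel}(C)=f_b^2\tr C-4f_b+\tr C^{-1}
  =\tr\bigl[(f_bC^{1/2}-C^{-1/2})^2\bigr]\ge0 .
\]
The identification with a square uses \(\tr\Id=2\). It is the FENE-P analogue of \(\tr(A+A^{-1}-2\Id)\) and reduces to it when \(f_b\equiv1\).

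Summing the two tested equations yields \eqref{eq:fene-free-energy-unified}. The only point requiring care is the algebra of the relaxation term: getting the dimension-dependent constant so that the dissipation is an exact perfect square. This is exactly where the normalization \(f_{b,2}=(b-2)/(b-\tr C)\) and the \(2\)D identity \(\tr\Id=2\) are used. The rest is routine. No boundary terms arise on \(\T^2\), and the derivation needs no window bounds beyond \(C\in\calD_b\).
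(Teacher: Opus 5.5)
Your proof is correct and follows the paper's route. You test the momentum equation by \(u\), test the conformation equation by \(\partial_C\calF_b=f_b\Id-C^{-1}\), and cancel \(-\alpha\int f_bC:\nabla u\,dx\) against the stretching contribution.

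The only step you argue differently is the sign of the relaxation term. The paper simply cites convexity of \(\calF_b\). Convexity together with \(\partial_C\calF_b(\Id)=0\) by itself only gives \(\partial_C\calF_b:(C-\Id)\ge0\). The relaxation, however, pairs \(\partial_C\calF_b\) with \(f_bC-\Id\), so the convexity route still needs an extra computation such as \((f_b-1)\,\partial_C\calF_b:C=b(\tr C-2)^2/(b-\tr C)^2\ge0\). Your explicit identity \(\calD_b^{\rm rel}=\tr[(f_bC^{1/2}-C^{-1/2})^2]\) is more direct and identifies the dissipation exactly. It is simply \(|C^{1/2}\partial_C\calF_b|^2\), because \(f_bC-\Id=C\,\partial_C\calF_b\).

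One correction to your closing remark. The square identity \((f\Id-C^{-1}):(fC-\Id)=\tr[(fC^{1/2}-C^{-1/2})^2]\) holds for every scalar \(f\) and in every dimension, since \(\tr\Id\) enters both sides in the same way. So it is not where the two-dimensional normalization is used. The factor \(b-2\) in \(\calF_b\) matters because it makes \(\partial_C\calF_b\) carry the same \(f_b\) that appears in the stress and in the relaxation. That matching is what both the stretching cancellation and the square structure need.
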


\begin{proof}
Multiplying the velocity equation by \(u\) gives the coupling term
\(-\alpha\int T_b(C):\nabla u\dd\).  The variational derivative of
\(\calF_b\) is \(f_b(C)\Id-C^{-1}\).  Pairing this derivative with the
conformation equation gives the stretching contribution
\(2\int T_b(C):\nabla u\dd\).  After multiplication by \(\alpha/2\), this
cancels the velocity coupling.  The remaining relaxation contribution is
nonnegative by convexity of \(\calF_b\) on \(\calD_b\).
\end{proof}

The new differential structure is the trace-gap equation.  Taking the trace of
\eqref{eq:fene-conformation} gives
\[
  D_t\tr C=2C:\nabla u-\lambda^{-1}\bigl(f_b(C)\tr C-2\bigr),
  \qquad D_t=\partial_t+u\cdot\nabla .
\]
Writing \(g=b-\tr C\), we obtain
\begin{equation}
  D_tg
  =
  -2C:\nabla u
  +\lambda^{-1}\left(\frac{b-2}{g}\tr C-2\right).
  \label{eq:fene-gap-unified}
\end{equation}
Equivalently,
\begin{equation}
  D_t\phi_b
  =
  \frac{2C:\nabla u}{g}
  -\lambda^{-1}\frac{(b-2)\tr C}{g^2}
  +2\lambda^{-1}\frac1g .
  \label{eq:fene-barrier-unified}
\end{equation}
The negative \(g^{-2}\) term is the finite-extensibility restoring force.  It
propagates the trace gap under a squared endpoint velocity clock.

\begin{lemma}[Finite-extensibility clock-exponent law]
\label{lem:finite-extensibility-clock-exponent}
Let \(r=\tr C<b\), \(C\in\Spp^2\), and suppose that along a smooth
Lagrangian trajectory
\[
  D_t r=2C:\nabla u-\lambda^{-1}H(r).
\]
Assume that for some \(\theta>0\), \(c_0>0\), and \(c_1\ge0\),
\[
  H(r)\ge c_0(b-r)^{-\theta}-c_1
\]
on the trace range reached by the solution.  Put
\[
  p_\theta=\frac{1+\theta}{\theta}.
\]
If \(b-r(0,x)\ge\eta_0>0\) and
\[
  \int_0^T\norm{\nabla u(t)}_{B^0_{\infty,1}}^{p_\theta}\,dt<\infty,
\]
then there exists \(\eta>0\), depending only on the displayed quantities and
the initial trace gap, such that
\[
  b-r(t,x)\ge\eta
  \qquad\hbox{on }[0,T)\times\T^2 .
\]
\end{lemma}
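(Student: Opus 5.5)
The plan is a one-dimensional comparison argument along Lagrangian trajectories for the gap \(g=b-r\). The Lagrangian flow is well defined because \(\nabla u\in L^{p_\theta}_tB^0_{\infty,1}\subset L^1_tL^\infty_x\). Along a trajectory, the hypothesis becomes
\[
  \dot g=-2C:\nabla u+\lambda^{-1}H(r)
  \ge -2|C|\,|\nabla u|+\lambda^{-1}c_0g^{-\theta}-\lambda^{-1}c_1 .
\]
Since \(C\in\Spp^2\), we have \(|C|\le\tr C=r<b\). With the endpoint embedding \(\norm{\nabla u}_{L^\infty}\le C\norm{\nabla u}_{B^0_{\infty,1}}\), this gives
\[
  \dot g\ge \lambda^{-1}c_0g^{-\theta}-\lambda^{-1}c_1-K L(t),
  \qquad L(t)=\norm{\nabla u(t)}_{B^0_{\infty,1}},\quad K=2bC .
\]
This uses no information on \(C\) beyond positivity and the trace bound, so no a priori spectral window is needed.

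The key step is a Young inequality that trades the velocity forcing against the restoring singularity, and this is exactly where the exponent \(p_\theta\) comes from. I would write \(KL=(g^{-\theta})^{a}(L^{p}g)^{b}\) with \(a+b=1\). Matching exponents forces \(b=\theta a\), \(pb=1\), hence \(a=1/(1+\theta)\), \(b=\theta/(1+\theta)\) and \(p=(1+\theta)/\theta=p_\theta\). Young's inequality with \(\epsilon\) then gives
\[
  KL\le \tfrac{c_0}{2\lambda}g^{-\theta}+C_{\theta,\lambda,c_0,b}\,L^{p_\theta}g .
\]
Therefore
\[
  \dot g\ge \tfrac{c_0}{2\lambda}g^{-\theta}-\lambda^{-1}c_1-C_*L(t)^{p_\theta}g .
\]

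Next, fix a threshold \(g_0>0\) with \(\frac{c_0}{2\lambda}g_0^{-\theta}\ge\lambda^{-1}c_1\). Then on any time interval during which \(g\le g_0\), we have \(\dot g\ge -C_*L^{p_\theta}g\), so \(\log g\) decreases by at most \(C_*\int L^{p_\theta}\). A standard first-exit argument now applies. Take the last time \(t_1\) before \(t\) at which \(g\ge\min(\eta_0,g_0)\); this exists either at \(t=0\) or by continuity. On \([t_1,t]\) the linear bound holds, and it yields
\[
  g(t)\ge \min(\eta_0,g_0)\exp\Bigl(-C_*\int_0^T L^{p_\theta}\,ds\Bigr)=:\eta .
\]
Since \(\eta\) is uniform in the trajectory, taking the infimum over starting points \(x\) gives the claim on \([0,T)\times\T^2\). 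Two technical points remain: trajectories are only absolutely continuous in time, so the differential inequalities hold a.e. and Gronwall applies in integrated form; and for a smooth solution \(g>0\) on \([0,T)\), so \(g^{-\theta}\) is finite along each trajectory.

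The main obstacle is the Young splitting. A naive bound that drops the \(g^{-\theta}\) term only gives \(\dot g\ge -KL-c\), and then the gap can reach zero after finite \(L^1\) clock. The restoring force must absorb the forcing, leaving a term linear in \(g\) whose coefficient \(L^{p_\theta}\) must be integrable. I would verify the exponent bookkeeping carefully here, together with the fact that the constant \(c_1\) only shifts the threshold \(g_0\).
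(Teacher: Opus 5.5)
Your argument is correct and is essentially the paper's proof: the paper passes to \(\Phi_g=-\log(b-r)\), \(z=e^{\Phi_g}\), and uses the maximization bound \(Az-cz^{1+\theta}\le C(1+A^{p_\theta})\) to get \(D_t\Phi_g\le C(1+L(t)^{p_\theta})\), which is your Young splitting rewritten after dividing by \(g\). The only difference is how the constant \(c_1\) is handled: the paper absorbs it into \(A=C(1+L)\) and integrates directly, so its \(\eta\) carries an extra factor \(e^{-CT}\), whereas your first-exit threshold \(g_0\) avoids this and gives a gap bound that depends on time only through \(\int_0^T L^{p_\theta}\,dt\).
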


\begin{proof}
Let \(\Phi_g=-\log(b-r)\), \(z=e^{\Phi_g}\), and
\(L(t)=\norm{\nabla u(t)}_{L^\infty}\).  Since \(C>0\) and \(r<b\),
\(|C:\nabla u|=|C:S(u)|\le bL(t)\).  Therefore
\[
  D_t\Phi_g
  =
  \frac{2C:\nabla u-\lambda^{-1}H(r)}{b-r}
  \le C(1+L(t))z-cz^{1+\theta}.
\]
The elementary inequality
\[
  Az-cz^{1+\theta}\le C_{\theta,c}(1+A^{p_\theta}),
  \qquad A,z\ge0,
\]
follows by maximizing the left-hand side in \(z\), with the maximum occurring
at scale \(z\sim A^{1/\theta}\).  Hence
\[
  D_t\Phi_g
  \le C\left(1+L(t)^{p_\theta}\right)
  \le C\left(1+\norm{\nabla u(t)}_{B^0_{\infty,1}}^{p_\theta}\right).
\]
Integrating along Lagrangian trajectories gives a uniform upper bound for
\(\Phi_g\), which is equivalent to a positive lower bound for \(b-r\).
\end{proof}

For FENE-P,
\[
  H_b(r)=f_b(r)r-2=\frac{(b-2)r}{b-r}-2
  =\frac{b(b-2)}{b-r}-b .
\]
Thus \(\theta=1\) in Lemma~\ref{lem:finite-extensibility-clock-exponent}, and
the algebraic clock exponent is \(p_\theta=2\).

\begin{remark}[Algebraic sharpness of the clock exponent]
The exponent \(p_\theta=(1+\theta)/\theta\) is sharp for the comparison
inequality used in Lemma~\ref{lem:finite-extensibility-clock-exponent}.  If
\(p<p_\theta\), there is no constant \(C_p\) such that
\[
  Az-cz^{1+\theta}\le C_p(1+A^p)
  \qquad\hbox{for all }A,z\ge0 .
\]
Indeed, maximizing the left-hand side in \(z\) gives a value comparable to
\(A^{(1+\theta)/\theta}\).  We use this only as an algebraic sharpness
statement for the boundary ODE comparison, not as a claim of dynamical
optimality for all possible continuation criteria.
\end{remark}

\begin{lemma}[Propagation of compact FENE windows]
\label{lem:fene-window-propagation}
Let \((u,C)\) be a smooth FENE-P solution on \([0,T)\), and assume
\[
  \int_0^T\norm{\nabla u(t)}_{\Besov}^2\,dt<\infty .
\]
If the initial range of \(C_0\) is contained in a compact subset of
\(\calD_b\), then \(C(t,\cdot)\) remains in a compact FENE window on
\([0,T)\).
\end{lemma}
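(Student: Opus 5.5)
The compact FENE window has three pieces, and I would propagate each one separately by a Lagrangian argument. The pieces are an upper spectral bound, a lower spectral bound, and a lower bound on the trace gap \(g=b-\tr C\). First note that the squared clock controls the linear one on a finite interval. By Cauchy--Schwarz,
\[
  \int_0^T\norm{\nabla u}_{\Besov}\,dt\le T^{1/2}\Bigl(\int_0^T\norm{\nabla u}_{\Besov}^2\,dt\Bigr)^{1/2}<\infty,
\]
and the embedding \(\norm{\nabla u}_{L^\infty}\le C\norm{\nabla u}_{\Besov}\) then gives \(L(t)=\norm{\nabla u(t)}_{L^\infty}\in L^1(0,T)\cap L^2(0,T)\).

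\emph{Trace gap.} Taking the trace of \eqref{eq:fene-conformation} gives the scalar equation
\[
  D_t r=2C:\nabla u-\lambda^{-1}H_b(r),\qquad r=\tr C,
\]
with \(H_b(r)=b(b-2)/(b-r)-b\). This satisfies the hypothesis of Lemma~\ref{lem:finite-extensibility-clock-exponent} with \(\theta=1\), \(c_0=b(b-2)>0\) and \(c_1=b\). The initial gap is \(\eta_0=b-\max\tr C_0>0\), by compactness of the initial range. Since \(p_\theta=2\), the lemma yields \(b-\tr C\ge\eta>0\) on \([0,T)\times\T^2\). For the upper spectral bound, \(C>0\) gives \(\lambda_{\max}C\le\tr C<b\), so this holds trivially as long as positivity persists.

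\emph{Lower spectral bound.} This is the main step. I would mimic Lemma~\ref{lem:oldroyd-spectral-window}. Fix a trajectory, let \(m(t)\) be the smallest eigenvalue of \(\mathcal C(t)=C(t,X(t;x_0))\), and let \(e\) be a unit eigenvector for \(m(t)\). Then
\[
  e\cdot\dot{\mathcal C}e=2m\,e\cdot\nabla u\,e-\lambda^{-1}f_b m+\lambda^{-1},
\]
which gives the Dini inequality
\[
  \dot m\ge-\bigl(2L(t)+\lambda^{-1}f_b\bigr)m+\lambda^{-1}\ge-\bigl(2L(t)+\lambda^{-1}f_b\bigr)m.
\]
The relaxation coefficient \(f_b=(b-2)/g\) is singular in general, but the trace-gap step already bounds it by \((b-2)/\eta\). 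This is why the gap must be propagated first. Gronwall then gives
\[
  m(t)\ge m_0\exp\Bigl(-\lambda^{-1}(b-2)\eta^{-1}t-2C\int_0^t\norm{\nabla u}_{\Besov}\,ds\Bigr)>0,
\]
uniformly in \(x_0\). A continuity argument makes this consistent with positivity: the solution is smooth with values in \(\calD_b\) on \([0,T)\), so the bound holds on every \([0,t]\subset[0,T)\).

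\emph{Assembling the window.} I would take
\[
  c_0=\inf_t m(t),\qquad c_1=b,\qquad \delta=\eta .
\]
All three constants depend only on the initial compact set, \(b\), \(\lambda\), \(T\) and \(\int_0^T\norm{\nabla u}_{\Besov}^2\,dt\). Together they give a compact FENE window on \([0,T)\), equivalently a compact subset of \(\calD_b\) by Lemma~\ref{lem:fene-window-recovery}.

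The only delicate points are the ordering (the gap before the lower eigenvalue, since \(f_b\) enters the lower bound) and justifying Lemma~\ref{lem:finite-extensibility-clock-exponent} along smooth characteristics. The latter is available because \(u\) is Lipschitz in space for a.e.\ \(t\).
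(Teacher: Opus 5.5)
Your proof is correct and follows the paper's own argument. You propagate the trace gap first (the paper completes the square in \(e^{\phi_b}\) directly, which is exactly the \(\theta=1\) case of Lemma~\ref{lem:finite-extensibility-clock-exponent} that you invoke), read off \(C\le bI\) from the trace, and then close the lower eigenvalue bound by the Dini--Gronwall inequality, using the now-bounded \(f_b\) and the \(L^1_t\) clock that the squared clock gives on a finite interval.
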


\begin{proof}
Let \(L(t)=\norm{\nabla u(t)}_{L^\infty}\).  Since \(C>0\) and \(\tr C<b\),
\(|C:S(u)|\le bL(t)\).  From \eqref{eq:fene-barrier-unified}, with
\(g=e^{-\phi_b}\) and \(\tr C=b-g\), we get
\[
  D_t\phi_b
  \le
  \left(2bL(t)+b\lambda^{-1}\right)e^{\phi_b}
  -b(b-2)\lambda^{-1}e^{2\phi_b}.
\]
Completing the square in \(e^{\phi_b}\) gives
\[
  D_t\phi_b\le C_{b,\lambda}\bigl(1+L(t)^2\bigr).
\]
Taking the supremum along Lagrangian trajectories and using
\(L(t)\le C\norm{\nabla u(t)}_{\Besov}\) propagates a positive lower bound for
\(b-\tr C\).

The trace bound gives \(C\le bI\).  The smallest eigenvalue \(m(t)\) satisfies
the Dini inequality
\[
  \dot m(t)\ge
  -\left(2L(t)+\lambda^{-1}\norm{f_b(C(t))}_{L^\infty}\right)m(t).
\]
The propagated trace gap bounds \(f_b(C)\) in \(L^\infty\), and
\(\int_0^T L(t)\,dt<\infty\) follows from the squared clock.  Gronwall's
inequality gives a positive lower spectral bound.  Hence \(C\) stays in a
compact subset of \(\calD_b\).
\end{proof}

We now use physical FENE-P variables
\[
  C=aI+Y,\qquad a=\frac12\tr C,\qquad Y=C^\circ .
\]
Since
\[
  T_b(C)=f_b(C)C-\Id=(f_ba-1)I+f_bY,
\]
the isotropic part is absorbed into the pressure.  With
\(f=f_b(C)=(b-2)/(b-2a)\), the equations become
\begin{align}
  \partial_tu+u\cdot\nabla u-\nu\Delta u+\nabla\pi
    &=\alpha\divv(fY), \label{eq:fene-physical-u}\\
  D_ta+\lambda^{-1}(af-1)&=S(u):Y,
    \label{eq:fene-physical-a}\\
  D_tY+\lambda^{-1}fY
    &=2aS(u)+[\nabla u\,Y+Y(\nabla u)^T]^\circ .
    \label{eq:fene-physical-Y}
\end{align}

\begin{lemma}[FENE-P coefficient commutators]
\label{lem:fene-coefficient-commutators}
Let \(K\Subset\calD_b\), \(m\ge3\), and \(0<\eps<1\).  On \(K\), for
\(|\beta|\le m\),
\[
  \norm{\partial^\beta(fY)-f\partial^\beta Y}_{L^2}
  +\norm{[\partial^\beta,f]Y}_{L^2}
  \le
  C_K\bigl(1+\norm{\Log C}_{H^{1+\eps}}\bigr)
  \bigl(\norm{a-1}_{H^m}+\norm{Y}_{H^m}\bigr).
\]
\end{lemma}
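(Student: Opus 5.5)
The plan is to treat both commutators at once as coefficient commutators of the form \(\partial^\beta(fY)-f\partial^\beta Y\), using the scalar structure of the FENE spring factor. On \(K\Subset\calD_b\) the trace gap \(b-2a\) is bounded below, so \(f=F(a)\) with \(F(a)=(b-2)/(b-2a)\) smooth, with all derivatives bounded on the scalar range of \(K\). Note also that \([\partial^\beta,f]Y=\partial^\beta(fY)-f\partial^\beta Y\) identically, so it suffices to bound one quantity. For \(\beta=0\) the commutator vanishes, so there is no additive constant. For \(|\beta|\ge1\) I expand by Leibniz:
\[
  \partial^\beta(fY)-f\partial^\beta Y=\sum_{0<\gamma\le\beta}\binom{\beta}{\gamma}\partial^\gamma f\,\partial^{\beta-\gamma}Y .
\]
Each \(\partial^\gamma f\) is further expanded by Fa\`a di Bruno as a sum of terms \(F^{(k)}(a)\prod_{i=1}^k\partial^{\gamma_i}a\), with \(\gamma_i\ne0\). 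Since \(a\) enters only through positive derivatives, \(a\) may be replaced by \(h=a-1\) throughout.

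Each resulting monomial contains \(F^{(k)}(a)\in L^\infty\), at least one positive derivative of \(h\), and a factor \(\partial^{\beta-\gamma}Y\). The total derivative count is \(|\beta|\le m\), and every factor carries at most \(m\) derivatives. I would place the factor carrying the most derivatives in \(L^2\) via \(\norm{h}_{H^m}\) or \(\norm{Y}_{H^m}\). The remaining factors go in \(L^\infty\) or the appropriate \(L^p\) through Gagliardo--Nirenberg interpolation between \(H^{1+\eps}\) and \(H^m\), exactly as in the tame estimate \eqref{eq:tame-high-low} of Lemma~\ref{lem:tame-high-low}.

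The one place I need care is the term where the top derivative \(\partial^\beta\) lands entirely on \(f\), namely \(F'(a)\partial^\beta h\,Y\). This is handled by \(\norm{Y}_{L^\infty}\le C_K\) and \(\norm{h}_{H^m}\). Terms where a full first derivative falls on \(a\) and \(m-1\) derivatives fall on \(Y\) use \(\nabla a\in L^p\) and \(\nabla^{m-1}Y\in L^q\), as in the proof of \eqref{eq:kp-a-u} in Lemma~\ref{lem:physical-commutator}. In both cases the bound is
\[
  C_K\bigl(1+\norm{(h,Y)}_{H^{1+\eps}}\bigr)\bigl(\norm{h}_{H^m}+\norm{Y}_{H^m}\bigr).
\]
Interpolation produces products of several \(H^{1+\eps}\) norms only in the degenerate multilinear terms. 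There I would use the compact-window \(L^\infty\) bounds for \(a\) and \(Y\) to keep the dependence linear in \(\norm{(h,Y)}_{H^{1+\eps}}\). If that fails, I would accept a polynomial factor, which the later Gronwall step tolerates only through the square, so I aim for linearity.

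Finally, as in \eqref{eq:window-log-composition}, I would use the smooth maps \(\Log C\mapsto a\) and \(\Log C\mapsto Y\) on \(K\), together with Lemma~\ref{lem:fene-coordinate-equivalence}, to get
\[
  \norm{h}_{H^{1+\eps}}+\norm{Y}_{H^{1+\eps}}\le C_K\bigl(1+\norm{\Log C}_{H^{1+\eps}}\bigr).
\]
This yields the stated bound. The main obstacle is this bookkeeping of the multilinear interpolation, namely ensuring a single low-order \(H^{1+\eps}\) factor rather than a power.
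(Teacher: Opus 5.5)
Your proposal is correct and follows the paper's proof essentially step for step. The shared steps are the Leibniz expansion with no \(\beta=0\) contribution, the observation that every positive derivative of \(f=F(a)\) carries a positive derivative of \(h=a-1\), tame products with one factor at the \(H^m\) level and the rest in \(H^{1+\eps}\) or \(L^\infty\), and the compact-window composition bound in terms of \(\norm{\Log C}_{H^{1+\eps}}\).

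The bookkeeping you flag as the main obstacle is harmless, and your first remedy (using the \(L^\infty\) window bounds) works. Since the window bounds \(h\) and \(Y\) in \(L^\infty\), apply the Gagliardo--Nirenberg inequality \(\norm{\nabla^j g}_{L^{2k/j}}\le C\norm{g}_{L^\infty}^{1-j/k}\norm{\nabla^k g}_{L^2}^{j/k}\) with \(k=|\beta|\), \(0<j<k\), together with H\"older. This bounds every Fa\`a di Bruno monomial by \(C_K(\norm{a-1}_{H^m}+\norm{Y}_{H^m})\). So no power of the low norm ever appears, and you can drop the polynomial fallback, which would not give the lemma as stated.
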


\begin{proof}
On \(K\), \(f=(b-2)/(b-2a)\), \(a\), and \(Y\) are smooth functions of
\(\Log C\).  The Sobolev composition theorem gives
\[
  \norm{a}_{H^{1+\eps}}+\norm{Y}_{H^{1+\eps}}+\norm{f}_{H^{1+\eps}}
  \le C_K\bigl(1+\norm{\Log C}_{H^{1+\eps}}\bigr).
\]
For positive derivatives, the tame composition estimate also gives
\[
  \norm{\nabla f}_{H^{m-1}}
  \le
  C_K\bigl(1+\norm{\Log C}_{H^{1+\eps}}\bigr)\norm{a-1}_{H^m},
\]
because every positive derivative of \(f(a)\) contains a positive derivative
of \(a\), and \(\partial^\gamma a=\partial^\gamma(a-1)\) for
\(|\gamma|>0\).
Expanding
\[
  \partial^\beta(fY)-f\partial^\beta Y
  =
  \sum_{0<\gamma\le\beta}c_{\beta,\gamma}
  \partial^\gamma f\,\partial^{\beta-\gamma}Y
\]
and applying the tame product estimate gives the first bound: one factor
carries the high norm \(\norm{a-1}_{H^m}\) or \(\norm{Y}_{H^m}\), while all
remaining factors are placed in \(H^{1+\eps}\) or \(L^\infty\).  No additive
constant is produced, since for \(\beta=0\) the left-hand side vanishes and for
\(|\beta|>0\) each term contains a positive derivative.  The commutator
\([\partial^\beta,f]Y\) has the same expansion.
\end{proof}

\begin{proposition}[Pressure-free FENE-P endpoint estimate]
\label{prop:fene-high-order-unified}
Let \(m\ge3\), \(0<\eps<1\), and let \((u,C)\) be a smooth FENE-P solution on
\([0,T]\) whose conformation tensor remains in a compact FENE window \(K\).
Set \(C=aI+Y\), \(a=\frac12\tr C\), \(Y=C^\circ\), and \(f=f_b(C)\).  Define
\[
  \calE_m^F(t)=
  \norm{u}_{H^m}^2+\norm{a-1}_{H^m}^2
  +\frac{\alpha}{4}\sum_{|\beta|\le m}
  \int_{\T^2}\frac{f}{a}|\partial^\beta Y|^2\,dx .
\]
Then
\[
\begin{aligned}
  \frac{d}{dt}\calE_m^F
  &+c\nu\norm{u}_{H^{m+1}}^2
    +c\lambda^{-1}\norm{Y}_{H^m}^2  \\
  &\le
  C_K\left(
    1+\norm{\nabla u}_{\Besov}
    +\norm{\Log C}_{H^{1+\eps}}^2
  \right)\calE_m^F .
\end{aligned}
  \label{eq:fene-high-order-unified}
\]
where \(C_K\) depends on the compact FENE window and the fixed parameters.
\end{proposition}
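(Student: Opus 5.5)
The plan is to reduce Proposition~\ref{prop:fene-high-order-unified} to the scalar-Peterlin estimate, Proposition~\ref{prop:scalar-peterlin-endpoint}, and then check the remainders directly. FENE-P is the scalar-Peterlin closure with
\[
  \psi(r)=\phi(r)=\frac{b-2}{b-r},\qquad \chi\equiv1 .
\]
Then \(\Psi(a)=\Phi(a)=f=(b-2)/(b-2a)\) and \(G(a)=af-1\), which matches \eqref{eq:fene-physical-u}--\eqref{eq:fene-physical-Y}. The weight \(W=\Psi/a=f/a\) is exactly the one appearing in \(\calE_m^F\).

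The first step is to verify the admissibility hypotheses on the compact FENE window \(K\). Since \(c_0I\le C\le c_1I\), we have \(a\ge c_0>0\). Since \(\tr C\le b-\delta\), the factor \(f\) is bounded above and below by positive constants. For the scalar relaxation,
\[
  G'(a)=f+af'(a)=f+\frac{2a(b-2)}{(b-2a)^2}\ge f\ge \frac{b-2}{b}=:\kappa>0,
\]
and \(G(1)=0\) because \(f(1)=1\). Hence \(a_*=1\), which is consistent with the term \(\norm{a-1}_{H^m}^2\) in \(\calE_m^F\).

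The equilibrium \(a_*=1\) need not lie in the scalar range of \(K\); for instance the window may have \(a>1\) everywhere. This is harmless, because the proof of Proposition~\ref{prop:scalar-peterlin-endpoint} only uses \(G'\ge\kappa\) on the interval joining \(a_*\) and the values of \(a\). Here \(G'\ge\kappa\) holds on all of \((0,b/2)\), which contains that interval. The scalar damping \(\int F(h)h\ge\kappa\norm{h}_{L^2}^2\) therefore survives unchanged.

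The remaining work retraces the proof of Proposition~\ref{prop:scalar-peterlin-endpoint}, with Lemma~\ref{lem:fene-coefficient-commutators} supplying the coefficient commutator bounds for \(\partial^\beta(fY)-f\partial^\beta Y\) and \([\partial^\beta,f]Y\). The steps are as follows.
\begin{enumerate}
\item \emph{Velocity equation.} Differentiate \eqref{eq:fene-physical-u} and test with \(\partial^\beta u\). This produces the principal term \(-\alpha\int f\,\partial^\beta Y:S(\partial^\beta u)\). The commutator contributes at most \(\delta\nu\norm{u}_{H^{m+1}}^2+C_K(1+\norm{\Log C}_{H^{1+\eps}}^2)\calE_m^F\).
\item \emph{Trace-free equation.} Test \eqref{eq:fene-physical-Y} with \((\alpha/2)(f/a)\partial^\beta Y\). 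The term \(2aS(\partial^\beta u)\) cancels the principal velocity term exactly. The relaxation gives the coercive contribution \(\frac{\alpha}{2\lambda}\int (f^2/a)|\partial^\beta Y|^2\ge c\lambda^{-1}\norm{\partial^\beta Y}_{L^2}^2\).
\item \emph{Remaining remainders.} The transport and stretching commutators are handled by Lemma~\ref{lem:physical-commutator}, with the weight \(w=f/a\) placed in \(L^\infty\). The \(H^{1+\eps}\) norms of \(a\) and \(Y\) are converted to \(\norm{\Log C}_{H^{1+\eps}}\) through Lemma~\ref{lem:fene-coordinate-equivalence}.
\end{enumerate}

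The weight derivative needs a separate check:
\[
  D_t(f/a)=(f/a)'(a)\bigl\{S(u):Y-\lambda^{-1}(af-1)\bigr\}.
\]
The derivative \((f/a)'\) contains \((b-2a)^{-2}\), which is bounded on the window. It follows that \(\norm{D_t(f/a)}_{L^\infty}\le C_K(1+\norm{\nabla u}_{\Besov})\).

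The step I expect to be the main obstacle is the scalar equation. Its relaxation \(\partial^\beta(af)\) is nonlinear in \(a\), with coefficients that are singular near the trace boundary. I would write
\[
  \partial^\beta(G(a))=G'(a)\,\partial^\beta(a-1)+R_\beta,
\]
and bound \(R_\beta\) by the scalar Moser remainder, \(C_K\norm{a}_{H^{1+\eps}}\norm{a-1}_{H^m}\), using that \(G\) is smooth with all derivatives bounded on the window. The principal term then gives \(\kappa\norm{\partial^\beta(a-1)}_{L^2}^2\). The zero-order term is handled by \(\int G(a)(a-1)\ge\kappa\norm{a-1}_{L^2}^2\), using the mean-value argument from the scalar-Peterlin proof. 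The source \(S(u):Y\) is absorbed by viscosity via Young's inequality, as in that proof. Finally, sum over \(|\beta|\le m\) and choose \(\delta\) small. All resulting constants depend only on \(K\), \(b\) and the fixed parameters, which gives \eqref{eq:fene-high-order-unified}.
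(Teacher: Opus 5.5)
Your proposal is correct and follows the paper's proof essentially step for step. It uses the velocity commutator via Lemma~\ref{lem:fene-coefficient-commutators}, the exact cancellation from testing with \((\alpha/2)(f/a)\partial^\beta Y\), the \(L^\infty\) bound on \(D_t(f/a)\), and coercivity of the scalar relaxation from \(\frac{d}{da}(af-1)=b(b-2)/(b-2a)^2>0\). Reading FENE-P as the scalar-Peterlin case \(\psi=\phi=f_b\), \(\chi\equiv1\) is how the paper itself frames the relationship. Your remark that \(G'\ge(b-2)/b\) on all of \((0,b/2)\) makes the zero-order mean-value argument valid on the segment to \(a_*=1\) even when \(1\) lies outside the window's scalar range; this makes explicit a point the paper leaves tacit.
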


\begin{proof}
The compact FENE window makes \(\calE_m^F\) equivalent to
\(\norm{u}_{H^m}^2+\norm{a-1}_{H^m}^2+\norm{Y}_{H^m}^2\).  Apply
\(\partial^\beta\), \(|\beta|\le m\), to \eqref{eq:fene-physical-u} and test
by \(\partial^\beta u\).  The principal elastic contribution is
\[
  -\alpha\int f\,\partial^\beta Y:S(\partial^\beta u)\,dx .
\]
The difference between \(\partial^\beta(fY)\) and \(f\partial^\beta Y\) is
bounded by Lemma~\ref{lem:fene-coefficient-commutators} and Young's inequality:
\[
  \delta\nu\norm{u}_{H^{m+1}}^2
  +C_{\delta,K}\bigl(1+\norm{\Log C}_{H^{1+\eps}}^2\bigr)\calE_m^F .
\]

Next differentiate \eqref{eq:fene-physical-Y} and test by
\((\alpha/2)(f/a)\partial^\beta Y\).  The term \(2aS(\partial^\beta u)\)
produces
\[
  \alpha\int f\,\partial^\beta Y:S(\partial^\beta u)\,dx ,
\]
which cancels the principal elastic term from the velocity equation.  The
relaxation term contributes
\[
  \frac{\alpha}{2\lambda}\int \frac{f^2}{a}|\partial^\beta Y|^2\,dx,
\]
which controls \(\norm{Y}_{H^m}^2\) on \(K\).  The time-dependent weight
satisfies
\[
  D_t\left(\frac fa\right)
  =
  \left(\frac fa\right)'(a)
  \left(S(u):Y-\lambda^{-1}(af-1)\right),
\]
and hence
\[
  \left|D_t\left(\frac fa\right)\right|
  \le C_K\left(1+\norm{\nabla u}_{L^\infty}\right).
\]
The weighted transport commutator is controlled by
Lemma~\ref{lem:physical-commutator}, with \(w=f/a\).  The coefficient,
stretching, and relaxation commutators are controlled by
Lemmas~\ref{lem:fene-coefficient-commutators} and~\ref{lem:physical-commutator}.
Their total contribution is bounded by
\[
  \delta\nu\norm{u}_{H^{m+1}}^2
  +C_{\delta,K}\left(1+\norm{\nabla u}_{\Besov}
  +\norm{\Log C}_{H^{1+\eps}}^2\right)\calE_m^F .
\]

Finally, apply \(\partial^\beta\) to \eqref{eq:fene-physical-a}.  Since
\[
  af(a)-1=0\quad\hbox{at }a=1,\qquad
  \frac{d}{da}(af(a)-1)=\frac{b(b-2)}{(b-2a)^2}>0
\]
on \(K\), the relaxation part is coercive for \(a-1\), up to lower-order
commutators.  The source \(\partial^\beta(S(u):Y)\) is estimated by placing the
top derivative of \(u\) in the viscous dissipation and all remaining factors in
\(H^{1+\eps}\) or \(L^\infty\).  Thus
\[
  \frac{d}{dt}\norm{a-1}_{H^m}^2
  \le
  \delta\nu\norm{u}_{H^{m+1}}^2
  +C_{\delta,K}\left(1+\norm{\nabla u}_{\Besov}
  +\norm{\Log C}_{H^{1+\eps}}^2\right)\calE_m^F .
\]
Choosing \(\delta\) sufficiently small and summing over \(|\beta|\le m\)
proves \eqref{eq:fene-high-order-unified}.
\end{proof}

\begin{theorem}[Unified geometric continuation criterion]
\label{thm:unified-continuation}
Let \(s\in\mathbb N\), \(s\ge3\), and \(0<\eps<1\).

For Oldroyd--B, let \((u,A)\) be a strong positive-cone solution on
\([0,T)\).  If
\[
  \int_0^T\norm{\nabla u(t)}_{\Besov}\,dt<\infty,
\]
then \(A\) automatically remains in a compact spectral window and the strong
solution continues beyond \(T\).

For FENE-P, fix \(b>2\) and let \((u,C)\) be a strong FENE-P solution on \([0,T)\), so that
\(C(t,x)\in\calD_b\).  If
\[
  \int_0^T\norm{\nabla u(t)}_{\Besov}^2\,dt<\infty,
\]
then the FENE-P strong solution continues beyond \(T\).
\end{theorem}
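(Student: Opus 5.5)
The plan is to treat the two halves separately; each follows the pattern window propagation, then the low-order logarithmic bound, then the high-order Gronwall estimate, then restart.

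\emph{Oldroyd--B.} This part is Theorem~\ref{thm:continuation} restated, so I would simply invoke it. For completeness, the chain is as follows. Lemma~\ref{lem:oldroyd-spectral-window} turns the clock \(\int_0^T\norm{\nabla u}_{\Besov}\,dt<\infty\) into a compact window \(m_TI\le A\le M_TI\). On this window the Oldroyd--B system is the admissible closure with \(\tau=\mu=1\), \(G=a-1\), \(a_*=1\). Proposition~\ref{prop:compact-window-log-generation} then gives \(\Log A\in L^2(0,T;H^{1+\eps})\). Proposition~\ref{prop:physical-L2-estimate} and Gronwall bound \(\calE_m\) on \([0,T)\), and Proposition~\ref{prop:local-continuation} restarts the solution.

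\emph{FENE-P.} First, I would apply Lemma~\ref{lem:fene-window-propagation}. Its hypothesis on the initial range holds because \(C_0\in H^s\subset C^0\) and the continuous image of \(\T^2\) is compact in \(\calD_b\). If we only know the solution on \([0,T)\), we restart from \(t=0\), where the range is compact. The lemma, driven by the trace-gap barrier with \(\theta=1\), \(p_\theta=2\) from Lemma~\ref{lem:finite-extensibility-clock-exponent}, then yields a fixed compact FENE window \(K\Subset\calD_b\) on \([0,T)\). Second, I would check that on \(K\) the FENE-P system in the variables \((a,Y)\) fits Definition~\ref{def:spectral-admissible} with \(\tau=\mu=f=(b-2)/(b-2a)\), \(G=af-1\) and \(a_*=1\). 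The conditions hold as follows: \(a\ge a_0>0\) from the lower spectral bound; \(\tau,\mu\ge (b-2)/b>0\); \(G(1)=0\); and \(G'(a)=b(b-2)/(b-2a)^2\ge (b-2)/b\). Proposition~\ref{prop:compact-window-log-generation} then applies. Its hypothesis \(\int_0^T\norm{\nabla u}_{\Besov}\,dt<\infty\) follows from the squared clock by Cauchy--Schwarz on the finite interval, so \(\Log C\in L^2(0,T;H^{1+\eps})\). Third, Proposition~\ref{prop:fene-high-order-unified} gives \(\frac{d}{dt}\calE_m^F\le C_K(1+\norm{\nabla u}_{\Besov}+\norm{\Log C}_{H^{1+\eps}}^2)\calE_m^F\). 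The coefficient is in \(L^1(0,T)\), so Gronwall gives \(\sup_{t<T}\calE_m^F<\infty\). By the equivalence on \(K\) and Lemma~\ref{lem:fene-coordinate-equivalence}, this bounds \(\norm{u}_{H^s}+\norm{C}_{H^s}\) uniformly. Proposition~\ref{prop:fene-local-continuation} then extends the solution beyond \(T\).

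The main point to watch is the bookkeeping rather than any new estimate. All constants \(C_K\) must depend only on the propagated window, and the window must depend only on \(T\), the initial data and the clock, never on the solution's \(H^s\) norm; otherwise the restart time would not be uniform. A second subtlety concerns smoothness. Propositions~\ref{prop:fene-high-order-unified} and~\ref{prop:compact-window-log-generation} are stated for smooth solutions, whereas strong solutions are only \(H^s\) with \(s\ge3\). I would handle this by mollifying the data, applying the estimates to the smooth approximate solutions given by the local theory, and passing to the limit using uniqueness. The estimates depend only on the clock and the window, which are stable under this approximation on compact subintervals, so the bounds survive the limit.
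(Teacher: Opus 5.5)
Your proposal is correct and follows the paper's proof essentially step for step. The Oldroyd--B half is Theorem~\ref{thm:continuation}. For FENE-P the chain is window propagation (Lemma~\ref{lem:fene-window-propagation}), then the \(L^1\) clock obtained from the squared clock on a finite interval, then Proposition~\ref{prop:compact-window-log-generation}, then Proposition~\ref{prop:fene-high-order-unified} with Gronwall, and finally the restart via Proposition~\ref{prop:fene-local-continuation}.

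Your extra checks are correct and fill in details the paper leaves implicit. These are: that FENE-P fits Definition~\ref{def:spectral-admissible} with \(\tau=\mu=f\), \(G=af-1\), \(a_*=1\); that the initial range is compact by continuity; and the smooth-approximation justification for applying estimates stated for smooth solutions.
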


\begin{proof}
The Oldroyd--B statement is Theorem~\ref{thm:continuation}.  For FENE-P, the
squared endpoint velocity clock propagates a compact FENE window by
Lemma~\ref{lem:fene-window-propagation}.  Since \(T<\infty\), this squared
clock also gives \(\nabla u\in L^1(0,T;\Besov)\).  On the compact FENE window,
Proposition~\ref{prop:compact-window-log-generation} gives
\[
  \Log C\in L^2(0,T;H^{1+\eps}).
\]
Proposition~\ref{prop:fene-high-order-unified} gives
\[
  \frac{d}{dt}\calE_s^F
  \le
  C_K\left(1+\norm{\nabla u}_{\Besov}
  +\norm{\Log C}_{H^{1+\eps}}^2\right)\calE_s^F .
\]
The coefficient is integrable by the squared velocity clock, the finite time
interval, and the logarithmic bound.  Gronwall's inequality gives a uniform \(H^s\) bound for
\((u,a-1,Y)\).  On the compact FENE window, this is equivalent to an \(H^s\)
bound for \(C\) and \(T_b(C)\).  Proposition~\ref{prop:fene-local-continuation}
then restarts the solution and extends it beyond \(T\).
\end{proof}

\begin{remark}[On time exponents and optimality]
The Oldroyd--B part of Theorem~\ref{thm:unified-continuation} improves the
purely logarithmic energy exponent from \(L^4_tH^{1+\eps}_x\) to an internally
\(L^2_tH^{1+\eps}_x\) logarithmic estimate.  This improvement is tied to the pressure-free
physical unknown \(A=aI+Y\) and to the cancellation between the elastic force
\(\alpha\divv Y\) and the stretching term \(2aS(u)\).  The statement should not
be read as a converse blow-up theorem: the criteria are sufficient conditions
for continuation and identify the only channels not controlled by the available
energy, cone, and barrier structures.  Thus divergence of one listed quantity
is necessary for breakdown within this framework, but it is not asserted to be
sufficient for singularity formation.

For FENE-P the same cancellation survives after the pressure-free splitting
\(T_b(C)=(f_ba-1)I+f_bY\).  The weight \(f_b/a\) cancels the principal
interaction between \(\alpha\divv(f_bY)\) and \(2aS(u)\), while the
finite-extensibility restoring force propagates the trace gap under the
squared endpoint velocity clock.  Thus the FENE-P endpoint criterion requires
\(\nabla u\in L^2_tB^0_{\infty,1}\), while the logarithmic stress
integrability needed in the high-order estimate is obtained on the propagated
compact FENE window.
\end{remark}

\begin{corollary}[Two-dimensional endpoint breakdown alternatives]
\label{cor:unified-three-channel}
At the first breakdown time of a strong two-dimensional stress-diffusion-free
solution, the Oldroyd--B model can fail only through the endpoint flow-map
channel:
\[
  \int_0^{T_*}\norm{\nabla u(t)}_{\Besov}\,dt=\infty .
\]
For FENE-P, finite-time breakdown forces loss of the squared endpoint velocity
clock:
\[
  \int_0^{T_*}\norm{\nabla u(t)}_{\Besov}^2\,dt=\infty .
\]
In particular, positive-cone loss, trace-gap collapse, and logarithmic
high-frequency concentration cannot occur while the relevant velocity clock
stays finite.
\end{corollary}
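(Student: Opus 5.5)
The plan is a contrapositive argument that reduces the corollary to Theorem~\ref{thm:unified-continuation}. Let \(T_*<\infty\) be the first breakdown time: the strong solution exists on \([0,T_*)\) with the regularity of the relevant definition and cannot be continued as a strong solution past \(T_*\).

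\textbf{Oldroyd--B.} Suppose, for contradiction, that \(\int_0^{T_*}\norm{\nabla u}_{\Besov}\,dt<\infty\). The initial datum lies in \(H^s\) with a compact initial spectral window, so Lemma~\ref{lem:oldroyd-spectral-window} applies on \([0,T_*)\) and propagates a window \(m_{T_*}I\le A\le M_{T_*}I\). Theorem~\ref{thm:continuation} then continues the solution beyond \(T_*\), which is a contradiction. This part is Corollary~\ref{cor:breakdown-alternative} verbatim.

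\textbf{FENE-P.} Suppose instead that \(\int_0^{T_*}\norm{\nabla u}_{\Besov}^2\,dt<\infty\). Lemma~\ref{lem:fene-window-propagation} gives a compact FENE window on \([0,T_*)\). Because \(T_*<\infty\), Cauchy--Schwarz gives \(\nabla u\in L^1(0,T_*;\Besov)\), so Proposition~\ref{prop:compact-window-log-generation} applies in the FENE-P normalization with \(\tau=f\), \(\mu=f\), \(G=af-1\), \(a_*=1\). The hypotheses \(\partial_aG=b(b-2)/(b-2a)^2>0\) and \(f\ge (b-2)/b\) hold on the window, and together they give \(\Log C\in L^2_tH^{1+\eps}_x\). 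Proposition~\ref{prop:fene-high-order-unified} with Gronwall then bounds \(\calE_s^F\), and Proposition~\ref{prop:fene-local-continuation} restarts the solution past \(T_*\). This again contradicts maximality, and it is exactly the FENE-P half of Theorem~\ref{thm:unified-continuation}.

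\textbf{The ``in particular'' clause.} Under a finite relevant clock, each of the three channels is already controlled in the steps above:
\begin{itemize}
\item positive-cone loss is excluded by the lower spectral bounds of Lemmas~\ref{lem:oldroyd-spectral-window} and~\ref{lem:fene-window-propagation};
\item trace-gap collapse is excluded by the barrier estimate via Lemma~\ref{lem:finite-extensibility-clock-exponent} with \(\theta=1\), \(p_\theta=2\);
\item logarithmic concentration, meaning divergence of \(\norm{\Log C}_{L^2_tH^{1+\eps}}\), is excluded by Proposition~\ref{prop:compact-window-log-generation}.
\end{itemize}
So none of these can occur before the clock diverges.

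\textbf{Main obstacle.} The one step needing care is the hypotheses of the propagation lemmas. Their constants must depend only on the finite clock and the initial window, uniformly up to \(T_*\), so that the restart time in the local continuation principles is uniform. Otherwise the propagated window could degenerate as \(t\to T_*\). This uniformity is explicit in the Gronwall bounds of those lemmas, so I expect the argument to close without new estimates.
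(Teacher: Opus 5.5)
Your proposal is correct and follows the paper's argument. The paper proves the corollary in one line by contradiction: if the relevant velocity clock were finite, Theorem~\ref{thm:unified-continuation} would extend the solution beyond \(T_*\). You reach the same contradiction by unpacking that theorem's proof (window propagation, the compact-window logarithmic estimate, the high-order Gronwall bound, and the local restart), which also makes the ``in particular'' clause explicit.
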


\begin{proof}
If none of the listed quantities diverges, then the relevant continuation
criterion in Theorem~\ref{thm:unified-continuation} applies and extends the
solution beyond \(T_*\), a contradiction.
\end{proof}

\section{Three-dimensional residual criteria and structural boundary}
\label{sec:three-dimensional}

The two-dimensional spectral quotient used above is sharp because the smooth
isotropic representation, together with the two-dimensional Cayley--Hamilton
identity, leaves only one deviatoric active channel after quotienting by
pressure.  In three dimensions a smooth local Cayley--Hamilton representation
has one additional pressure-free direction.  This is the basic structural
boundary between the two-dimensional theorem and the three-dimensional
statements below.  The aim of this section is therefore not to force a false
three-dimensional analogue of the scalar quotient closure, but to separate the
additional direction explicitly, estimate it as a residual on compact
conformation windows, and then apply the resulting pressure-free mechanism to
Oldroyd--B and FENE-P, where the residual coefficient is in fact zero.  The
section is deliberately not a general three-dimensional theory of anisotropic or
non-spectral tensor stresses.

Throughout this section the domain is \(\T^3\), \(m\ge4\), and
\[
  B(t)=\norm{\nabla u(t)}_{\Besov}.
\]
The index \(m\ge4\) is part of the strong-solution framework in three dimensions:
it gives the algebra and compact-window Moser bounds needed for the residual
channel and for classical spectral barriers.  No weak or Leray-type
three-dimensional theory is inferred from these estimates.  The high-order
argument below uses a two-tier closure: the endpoint clock first controls an
\(H^3\) compact-window energy, and this lower tier supplies the
\(W^{1,\infty}\) coefficient bounds needed in the \(H^m\), \(m\ge4\), restart
estimate.
For a generic three-dimensional tensor we write
\[
  a=\frac13\tr Z,\qquad h=a-1,\qquad Y=Z-aI .
\]
The letter \(Z\) is reserved here for model-free spectral statements.  In the
Oldroyd--B specialization \(Z=A\), while in the FENE-P specialization \(Z=C\).
For three-dimensional FENE-P, throughout this section
\[
  f_b(C)=f_{b,3}(C)=\frac{b-3}{b-\tr C},\qquad b>3.
\]
We also decompose
\[
  \nabla u=S+\Omega,\qquad
  S=\frac12(\nabla u+\nabla u^T),\quad
  \Omega=\frac12(\nabla u-\nabla u^T).
\]
We also set
\[
  R_2(Y)=(Y^2)^\circ=Y^2-\frac13\tr(Y^2)I .
\]

\begin{proposition}[Algebraic boundary of the three-dimensional quotient]
\label{prop:3d-algebraic-boundary}
The map \(Y\mapsto R_2(Y)=(Y^2)^\circ\) is not reducible, in
\(\Sym^3_0\), to a scalar multiple of \(Y\).  More precisely, if
\[
  Y_* = \mathrm{diag}(1,0,-1),
\]
then
\[
  R_2(Y_*)=\mathrm{diag}\left(\frac13,-\frac23,\frac13\right)
  \notin \operatorname{span}\{Y_*\} .
\]
Consequently, there is no neighbourhood of \(Y_*\) on which
\(R_2(Y)=\lambda(Y)Y\) for a scalar function \(\lambda\).  Thus a
three-dimensional spectral stress with a nonzero \(q_2\)-channel cannot, in
general, be reduced after quotienting by pressure to a single active scalar
multiple of \(Y\).  The remaining possibilities are either a residual estimate
on a compact conformation window or an additional model-specific mechanism that
removes or controls this channel.
\end{proposition}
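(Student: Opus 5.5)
The plan is a direct computation followed by a continuity argument. First I will compute \(Y_*^2=\mathrm{diag}(1,0,1)\), so that \(\tr(Y_*^2)=2\). Then
\[
  R_2(Y_*)=\mathrm{diag}(1,0,1)-\tfrac23 I=\mathrm{diag}\bigl(\tfrac13,-\tfrac23,\tfrac13\bigr).
\]
To show this is not in \(\operatorname{span}\{Y_*\}\), I will compare entries. Any multiple \(\lambda Y_*\) has middle diagonal entry \(0\), whereas \(R_2(Y_*)\) has middle entry \(-\tfrac23\ne0\). Alternatively, \(\lambda Y_*\) has first and third entries of opposite sign, while \(R_2(Y_*)\) has them equal and nonzero. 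This settles the pointwise claim.

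For the neighbourhood statement, I will argue by contradiction. Suppose \(R_2(Y)=\lambda(Y)Y\) on some neighbourhood of \(Y_*\), for a scalar function \(\lambda\) with no regularity assumed. Evaluating at the single point \(Y=Y_*\) already gives \(R_2(Y_*)=\lambda(Y_*)Y_*\), which contradicts the first step. So no continuity argument is actually needed: the pointwise failure at the centre of the neighbourhood suffices.

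It is also worth noting that the failure is open. The map \(Y\mapsto Y\wedge R_2(Y)\) is continuous (equivalently, take the \(2\times2\) minors of the pair of coordinate vectors in \(\Sym^3_0\)) and nonzero at \(Y_*\). Hence \(R_2(Y)\) and \(Y\) are linearly independent on a whole neighbourhood, so the residual direction is genuinely independent there and not only at one point.

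For the consequence about stresses, I will use the three-dimensional identity from the introduction,
\[
  T(Z)^\circ=(q_1+2aq_2)Y+q_2R_2(Y),
\]
which follows by expanding \(Z^2=a^2I+2aY+Y^2\) and taking trace-free parts. If \(q_2\ne0\) at a state \(Z=aI+Y\) with \(Y\) near \(Y_*\) (scaled if necessary, since \(R_2(tY)=t^2R_2(Y)\) preserves the independence), then \(T(Z)^\circ\) has a nonzero component outside \(\operatorname{span}\{Y\}\) by the independence above. Therefore it cannot equal a scalar multiple of \(Y\). The final sentence of the statement, listing the remaining possibilities, is interpretive and needs no proof.

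No step is a real obstacle. The only point requiring care is making "not reducible" precise: the claim is non-membership in \(\operatorname{span}\{Y\}\) at a specific point, not a claim about an entire open set, and the one-line observation of open linear independence handles the neighbourhood version cleanly.
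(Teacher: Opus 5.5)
Your proposal is correct and follows essentially the paper's own argument. Like the paper, you compute \(R_2(Y_*)=\mathrm{diag}(\tfrac13,-\tfrac23,\tfrac13)\), rule out \(\lambda Y_*\) via the zero middle entry, get the neighbourhood claim by evaluating at \(Y_*\) itself, and read off the stress consequence from the split \(T(Z)^\circ=(q_1+2aq_2)Y+q_2R_2(Y)\). Your further remark that linear independence of \(Y\) and \(R_2(Y)\) is an open condition, together with the scaling \(R_2(tY)=t^2R_2(Y)\), is a correct and slightly stronger addition that the paper does not spell out.
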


\begin{proof}
For \(Y_*\) as above, \(\tr Y_*=0\) and
\[
  Y_*^2=\mathrm{diag}(1,0,1),\qquad \tr(Y_*^2)=2.
\]
Hence
\[
  (Y_*^2)^\circ=Y_*^2-\frac23 I
  =\mathrm{diag}\left(\frac13,-\frac23,\frac13\right).
\]
This matrix cannot be \(\lambda Y_*\), since the middle diagonal entry of
\(\lambda Y_*\) is zero for every \(\lambda\), whereas the middle diagonal entry
of \(R_2(Y_*)\) is \(-2/3\).  This proves the non-reducibility at \(Y_*\), and a
neighbourhood representation would in particular hold at \(Y_*\), giving the
same contradiction.  The final assertion follows from the split
\eqref{eq:3d-pressure-residual-split}.
\end{proof}

\begin{remark}[Exact scope of the three-dimensional statements]
\label{rem:3d-scope}
There are two layers in the three-dimensional section.  The first is a
compact-window algebraic layer: for spectral isotropic laws admitting the local
representation \eqref{eq:3d-CH-representation}, the pressure quotient has the
\(Y\)-channel and the residual \(R_2(Y)\).  Proposition~\ref{prop:3d-algebraic-boundary}
shows why this is the natural end point of the quotient reduction in three
dimensions: the residual cannot be folded back into the scalar \(Y\)-channel.
This is not a closure theorem for general anisotropic or non-spectral stresses.
If the velocity is forced by an additional deviatoric law
\(\Sigma(Z,x,t)^\circ\), or by a tensor law depending
on preferred directions, gradients, or memory variables, that term can contain
pressure-free directions that do not pair with the universal \(2aS(u)\)
stretching block.  Such terms require an independent coercive structure or an
external direct stress clock.

The second layer is window propagation.  The residual estimate below assumes a
compact conformation window; quotient algebra alone does not propagate it.  In
this paper the compact window is propagated only in the model-specific theorems:
for Oldroyd--B by the Lagrangian eigenvalue comparison, and for FENE-P by the
same lower-cone comparison together with the finite-extensibility trace-gap
barrier.  Thus Proposition~\ref{prop:3d-compact-window} is a conditional
compact-window criterion, while Theorems~\ref{thm:3d-ob-clock} and
\ref{thm:3d-fene-square-clock} are window-propagated results for the two
constitutive laws considered explicitly.
\end{remark}

\begin{lemma}[Three-dimensional pressure quotient with Cayley--Hamilton residual]
\label{lem:3d-pressure-residual}
Let \(K\Subset\Spp^3\) be a compact spectral window and let \(T\) be a
spectral isotropic stress law defined on a neighbourhood of \(K\).  Assume that,
on \(K\), \(T\) admits a smooth local Cayley--Hamilton representation
\begin{equation}
  T(Z)=q_0(Z)I+q_1(Z)Z+q_2(Z)Z^2,
  \label{eq:3d-CH-representation}
\end{equation}
where \(q_0,q_1,q_2\) are smooth spectral scalar functions.  If
\(Z=aI+Y\), \(a=\frac13\tr Z\), \(\tr Y=0\), then its pressure-free part is
\begin{equation}
  T(Z)^\circ=\tau_1(Z)Y+\tau_2(Z)R_2(Y),\qquad
  \tau_1=q_1+2aq_2,
  \quad \tau_2=q_2 .
  \label{eq:3d-pressure-residual-split}
\end{equation}
Thus, within the class \eqref{eq:3d-CH-representation}, the only
three-dimensional pressure-free channel beyond the cancellative \(Y\)-direction
is the Cayley--Hamilton residual \(R_2(Y)\).  For Oldroyd--B and FENE-P the representation is explicit and has \(q_2\equiv0\), so this residual vanishes.
\end{lemma}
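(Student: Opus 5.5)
The proof is a direct algebraic computation. We substitute \(Z=aI+Y\) into the representation \eqref{eq:3d-CH-representation}, expand, and take the trace-free part. No analysis is involved, since the statement is pointwise on \(K\). The scalar coefficients \(q_j(Z)\) are merely evaluated at \(Z\), and taking the deviatoric part is linear over scalar multipliers. So we may treat \(q_0,q_1,q_2\) as fixed numbers at each point.

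First expand \(Z^2=(aI+Y)^2=a^2I+2aY+Y^2\). This gives
\[
  T(Z)=(q_0+aq_1+a^2q_2)I+(q_1+2aq_2)Y+q_2Y^2 .
\]
Next apply the trace-free projection \(M\mapsto M^\circ=M-\frac13(\tr M)I\). It annihilates \(I\) and fixes \(Y\), because \(\tr Y=0\), and it sends \(Y^2\) to \(R_2(Y)=(Y^2)^\circ\). Linearity then gives \eqref{eq:3d-pressure-residual-split} with \(\tau_1=q_1+2aq_2\) and \(\tau_2=q_2\). We also record the full isotropic part,
\[
  q_0+aq_1+a^2q_2+\tfrac13 q_2\tr(Y^2),
\]
which is a gradient mode absorbed into the pressure, exactly as in Proposition~\ref{prop:active-deviatoric-projection}. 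This justifies the sentence that the only pressure-free channel beyond the \(Y\)-direction is \(R_2(Y)\).

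For the model statements, we read off the representations. Oldroyd--B has \(T(A)=A-I\), so \(q_0=-1\), \(q_1=1\), \(q_2=0\). Three-dimensional FENE-P has \(T_{b,3}(C)=f_{b,3}(C)C-I\), so \(q_0=-1\), \(q_1=f_{b,3}(\tr C)\), \(q_2=0\). Here \(f_{b,3}\) is smooth on any compact \(K\Subset\calD_{b,3}\) because \(b-\tr C\) is bounded below there. In both models \(\tau_2\equiv0\) and \(\tau_1=q_1\).

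The only point requiring care is the smoothness and well-definedness of the coefficients \(q_j\) as functions of \(Z\). In general the Cayley--Hamilton coefficients are not unique where eigenvalues coincide. This is why the lemma assumes the smooth representation rather than deriving it, and I would simply note that the split is stated relative to the chosen representation. For Oldroyd--B and FENE-P the representation is explicit, so no issue arises there.
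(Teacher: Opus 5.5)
Your proposal is correct and follows essentially the same route as the paper: expand \(Z^2=a^2I+2aY+Y^2\), take the trace-free part linearly, read off \(q_2\equiv0\) from the explicit Oldroyd--B and FENE-P stresses, and treat smoothness of the coefficients at repeated spectra as part of the hypothesis. Your explicit formula for the isotropic (pressure) part is a harmless addition that the paper does not record.
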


\begin{proof}
The representation \eqref{eq:3d-CH-representation} is the hypothesis under
which the general spectral statement is used.  The only point requiring care is
the smoothness of the scalar coefficients at multiple spectra.  We do not use an
eigenvalue labelling: for smooth isotropic tensor laws this is the standard
local representation on compact spectral windows, while the lemma may also be
read as a purely algebraic statement conditional on
\eqref{eq:3d-CH-representation}.  The two model stresses used below satisfy it
directly: \(A-I=(-1)I+A\) for Oldroyd--B and
\(f_b(C)C-I=-I+f_b(C)C\) for FENE-P.

The rest is algebraic and does not use spectral diagonalization.  Since
\[
  Z^2=a^2I+2aY+Y^2,
\]
we have
\[
  (Z^2)^\circ=2aY+(Y^2)^\circ .
\]
Taking the trace-free part of \(q_0I+q_1Z+q_2Z^2\) gives
\eqref{eq:3d-pressure-residual-split}.  The last assertion follows from
\(A-I=(a-1)I+Y\) for Oldroyd--B and
\(f_b(C)C-I=(f_ba-1)I+f_bY\) for FENE-P.
\end{proof}

\begin{lemma}[Compact-window estimate for the residual channel]
\label{lem:3d-residual-estimate}
Let \(K\Subset\Spp^3\) be a compact conformation window and let
\(\tau_2\) be a smooth spectral scalar on a neighbourhood of \(K\).  Suppose
\(Z(t,x)\in K\) on \(\T^3\), set \(Y=Z-\frac13(\tr Z)I\), and let
\(m\ge4\).  Then for every multi-index \(|\beta|\le m\) and every
\(\delta>0\),
\[
\begin{aligned}
  \left|\left\langle
  \partial^\beta\divv\bigl(\tau_2(Z)R_2(Y)\bigr),
  \partial^\beta u\right\rangle_{L^2(\T^3)}\right|
  &\le
  \delta\norm{\nabla\partial^\beta u}_{L^2}^2
  +C_{\delta,K,m,\tau_2}\bigl(1+\norm{Z-I}_{H^m}^2\bigr).
\end{aligned}
\]
Consequently, if the velocity equation has viscosity \(\nu>0\), choosing
\(\delta\) so that \(N_m\delta<\nu/4\), where
\(N_m=\#\{\beta:\ |\beta|\le m\}\), allows the summed residual to be absorbed
into the viscous velocity dissipation.  It contributes only compact-window
Gronwall terms and is not a second cancellation mechanism.
\end{lemma}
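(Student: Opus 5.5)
The plan is to move one derivative off the residual stress by integrating by parts, pair it with the viscous quantity \(\nabla\partial^\beta u\), and then bound the remaining stress factor in \(L^2\) by tame Moser calculus on the compact window. No cancellation is used; the residual is simply treated as a forcing of order \(m\) in \(L^2\).

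\emph{Integration by parts.} Since \(\T^3\) has no boundary,
\[
  \left\langle \partial^\beta\divv\bigl(\tau_2(Z)R_2(Y)\bigr),\partial^\beta u\right\rangle
  =-\left\langle \partial^\beta\bigl(\tau_2(Z)R_2(Y)\bigr),\nabla\partial^\beta u\right\rangle .
\]
By Cauchy--Schwarz and Young's inequality this is at most
\[
  \delta\norm{\nabla\partial^\beta u}_{L^2}^2
  +(4\delta)^{-1}\norm{\tau_2(Z)R_2(Y)}_{H^m}^2 .
\]
Hence the whole statement reduces to the static bound
\[
  \norm{\tau_2(Z)R_2(Y)}_{H^m}\le C_{K,m,\tau_2}\bigl(1+\norm{Z-I}_{H^m}\bigr).
\]

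\emph{The static bound.} On \(K\) the map \(F(Z)=\tau_2(Z)R_2(Y)\) is smooth, with \(Y=Z-\frac13(\tr Z)I\). Write \(F(Z)=F(I)+\widetilde F(Z-I)\) with \(\widetilde F(0)=0\). The constant \(F(I)\) has \(H^m(\T^3)\) norm \(\le C\); in fact it vanishes, since \(Y=0\) at \(Z=I\). It remains to apply the composition estimate \eqref{eq:moser-composition} to \(\widetilde F\) in three dimensions. This is valid because \(m\ge4>3/2\), so \(H^m(\T^3)\) is an algebra. The \(L^\infty\) bound on \(Z-I\) needed there comes from the compact window \(K\).

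\emph{Main obstacle.} The delicate point is that Lemma~\ref{lem:moser} is stated on \(\T^2\), and \(\widetilde F\) is defined only near \(K\), not on all of \(\Sym^3\). To handle the second issue, extend \(\widetilde F\) smoothly, with compact support, to all of \(\Sym^3\) (and likewise \(\tau_2\) off a neighbourhood of \(K\)). Since \(Z(t,x)\in K\), the extension does not change \(F(Z)\). The composition bound \(\norm{G(v)}_{H^m}\le C_{G,K,m}\norm{v}_{H^m}\) for smooth \(G\) with \(G(0)=0\) on \(\T^3\) then follows from the same Fa\`a di Bruno/Gagliardo--Nirenberg argument used for Lemma~\ref{lem:moser}, with \(d=3\). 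Note also that the constant depends only on \(\norm{Z}_{L^\infty}\), so no \(W^{1,\infty}\) tier is needed at this step.

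\emph{Summation.} Finally, sum over the \(N_m\) multi-indices \(|\beta|\le m\). Choosing \(\delta\) with \(N_m\delta<\nu/4\), the terms \(\delta\norm{\nabla\partial^\beta u}_{L^2}^2\) are absorbed into \(\nu\norm{\nabla u}_{H^m}^2\), leaving only the Gronwall term \(C(1+\norm{Z-I}_{H^m}^2)\).
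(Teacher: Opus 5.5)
Your proposal is correct and follows the paper's proof essentially step for step: integrate by parts on \(\T^3\) to pair \(\partial^\beta(\tau_2(Z)R_2(Y))\) with \(\nabla\partial^\beta u\), bound that factor in \(L^2\) by \(C_{K,m,\tau_2}(1+\norm{Z-I}_{H^m})\) using compact-window Moser composition, and conclude with Cauchy--Schwarz and Young. Your extra care in extending \(\tau_2\) off a neighbourhood of \(K\) and in running the composition estimate in three dimensions fills in details the paper leaves implicit, without changing the argument.
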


\begin{proof}
On the torus, integration by parts gives
\[
  \left\langle
  \partial^\beta\divv(\tau_2(Z)R_2(Y)),\partial^\beta u\right\rangle
  =-
  \left\langle
  \partial^\beta(\tau_2(Z)R_2(Y)),\nabla\partial^\beta u\right\rangle .
\]
Since \(Z\in K\) pointwise, \(Y\), \(R_2(Y)\), \(\tau_2(Z)\), and all derivatives
of \(\tau_2\) needed in the Moser calculus are uniformly bounded in
\(L^\infty\) by constants depending only on \(K\) and \(\tau_2\).  Because
\(m\ge4\), \(H^m(\T^3)\) is an algebra and the compact-window Moser estimates
imply
\[
  \norm{\partial^\beta(\tau_2(Z)R_2(Y))}_{L^2}
  \le C_{K,m,\tau_2}\bigl(1+\norm{Z-I}_{H^m}\bigr).
\]
Cauchy's inequality followed by Young's inequality gives the displayed bound.
The smallness condition on \(\delta\) relative to \(\nu\) and \(N_m\) is not
part of the algebraic estimate; it is only the choice used when this term is
inserted into the summed viscous high-order energy inequality.
\end{proof}

The compact-window velocity-clock results below do not use entropy variables in
three dimensions.  After the Oldroyd--B and FENE-P criteria are proved, we record
one finite-dimensional calculation for a different purpose: it identifies when
the three-dimensional FENE-P entropy variables are coercive enough to support a
possible logarithmic route.  The parameter \(q_*\) introduced there is therefore a
spectral-variance boundary for the entropy-variable Jacobian, not a hidden
assumption in the pressure-free velocity-clock argument.

The trace-free stretching identity is
\[
  \nabla u\,Y+Y\nabla u^T+2aS-\frac23(Y:S)I
  =
  2aS+\left(SY+YS-\frac23(Y:S)I\right)+(\Omega Y-Y\Omega).
\]
The first term is the active stretching block.  The second is passive and is
absorbed by viscosity.  The third is objective rotation and is skew in the
principal dyadic energy.  Lemma~\ref{lem:3d-pressure-residual} shows that the
stress side has the same structure: a cancellative active channel and, in the
general spectral case, the residual channel estimated in
Lemma~\ref{lem:3d-residual-estimate}.

\begin{proposition}[Three-dimensional compact-window pressure-free criterion]
\label{prop:3d-compact-window}
Let \((u,C)\) be a strong three-dimensional Oldroyd--B or FENE-P solution on
\([0,T)\), with \((u_0,C_0-I)\in H^m\), \(m\ge4\).  In this proposition, \(C\) is the
common conformation variable; in the Oldroyd--B specialization it should be read
as the tensor denoted \(A\) in the two-dimensional model sections.  Assume that \(C\) remains in a fixed
compact conformation window:
\[
  \kappa I\le C(t,x)\le KI
  \qquad\hbox{for Oldroyd--B},
\]
or
\[
  \kappa I\le C(t,x),\qquad \tr C(t,x)\le b-\kappa
  \qquad\hbox{for FENE-P}.
\]
If
\[
  \int_0^T B(t)\,dt<\infty,
\]
then
\[
  \sup_{0\le t<T}\left(\norm{u(t)}_{H^m}+\norm{C(t)-I}_{H^m}\right)<\infty .
\]
Consequently, if \(T\) is the maximal existence time and the same compact
window persists up to \(T\), the solution continues beyond \(T\).
\end{proposition}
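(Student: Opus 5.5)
Taking the compact window as given, I would run the two-dimensional pressure-free mechanism of Proposition~\ref{prop:physical-L2-estimate} and Proposition~\ref{prop:fene-high-order-unified} on \(\T^3\), in the variables \(C=aI+Y\), \(a=\frac13\tr C\), with the two-tier structure announced at the start of this section. By Lemma~\ref{lem:3d-pressure-residual}, both models have \(q_2\equiv0\), so the stress force is \(\alpha\divv(\tau_1 Y)\) modulo pressure, with \(\tau_1=1\) for Oldroyd--B and \(\tau_1=f_{b,3}\) for FENE-P. Lemma~\ref{lem:3d-residual-estimate} is therefore not needed. The trace-free equation reads
\[
  D_tY+\lambda^{-1}\tau_1 Y=2aS+\Bigl(SY+YS-\tfrac23(Y:S)I\Bigr)+(\Omega Y-Y\Omega),
\]
and the scalar equation is \(D_ta+\lambda^{-1}G(a)=\frac13\cdot 2\,S:Y\), with \(G\) monotone on the window; for FENE-P, \(G'(a)>0\) by the same computation as in two dimensions, with \(b-3\) in place of \(b-2\). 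I use the weighted energy
\[
  \calE_m=\norm{u}_{H^m}^2+\sigma\norm{a-1}_{H^m}^2+\tfrac{\alpha}{4}\sum_{|\beta|\le m}\int (\tau_1/a)|\partial^\beta Y|^2.
\]
With this weight, the principal coupling \(\alpha\int\tau_1\partial^\beta Y:S(\partial^\beta u)\) cancels exactly as in Proposition~\ref{prop:active-deviatoric-projection}. The objective rotation term is skew at principal order, since \(\partial^\beta Y:(\Omega\partial^\beta Y-\partial^\beta Y\Omega)=0\), so only its commutators remain.

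\emph{First tier (\(H^3\)).} In three dimensions \(H^3\hookrightarrow W^{1,\infty}\), and there is no low-order logarithmic input at hand comparable to Proposition~\ref{prop:compact-window-log-generation} at \(H^{3/2+\eps}\). The cleanest route is therefore to close directly at level \(3\), with all coefficient commutators estimated tamely. The remainders have two types. Terms with the top derivative on \(u\) are bounded by \(C_K\norm{(a,Y)}_{W^{1,\infty}}\norm{u}_{H^{4}}\norm{Y}_{H^3}\) and absorbed by viscosity. Terms with the top derivative on \((a,Y)\) carry \(\norm{\nabla u}_{L^\infty}\) or \(\norm{\nabla(a,Y)}_{L^\infty}\). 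Transport commutators go through Lemma~\ref{lem:endpoint-transport}, valid in any dimension, and give \(B(t)\). The time-dependent weight costs \(C_K(1+B(t))\), exactly as in the two-dimensional proofs.

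The factor \(\norm{\nabla(a,Y)}_{L^\infty}^2\) is the \textbf{main obstacle}. It is the three-dimensional analogue of \(\norm{\Log C}_{H^{1+\eps}}^2\), and it is not controlled by \(B\) alone. To handle it, I would estimate \(\nabla C\) in \(L^\infty\) by differentiating the conformation equation once and integrating along characteristics. The forcing contains \(\nabla^2u\), so I would not seek a pointwise bound. Instead I would use the logarithmic Sobolev (Brezis--Gallouet/Kozono--Taniuchi) inequality
\[
  \norm{\nabla(a,Y)}_{L^\infty}\le C\bigl(1+\norm{\nabla(a,Y)}_{B^0_{\infty,\infty}}\log(e+\norm{(a,Y)}_{H^3})\bigr),
\]
together with the parabolic smoothing of \(u\) in \(L^1_tB^1_{\infty,1}\), to reach a log-Gronwall inequality \(\frac{d}{dt}\calE_3\le C(1+B)\calE_3\log(e+\calE_3)\). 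Such an inequality is integrable for \(B\in L^1_t\). If this fails, the fallback is to place the residual low-order factor into viscous absorption via Young's inequality, as in Lemma~\ref{lem:physical-commutator}. This is the step I expect to need the most care.

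\emph{Second tier and restart.} Once \(\sup_t\calE_3<\infty\) and \(\nabla(a,Y)\in L^2_tL^\infty\) (or better) are known, the \(H^m\) estimate for \(m\ge4\) is linear. Every commutator has the form \(C_K(1+\norm{(a,Y)}_{W^{1,\infty}}^2+B)\calE_m\) plus viscously absorbable terms. Gronwall then yields the stated uniform \(H^m\) bound, because on the compact window \(\norm{C-I}_{H^m}\) is equivalent to \(\norm{a-1}_{H^m}+\norm{Y}_{H^m}\). The three-dimensional analogue of Proposition~\ref{prop:local-continuation}, or of Proposition~\ref{prop:fene-local-continuation}, applied on the persisting window then extends the solution beyond \(T\).
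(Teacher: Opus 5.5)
\textbf{There is a gap in your first tier: it does not close as proposed.} You are right that \(\norm{\nabla(a,Y)}_{L^\infty}^2\) is not controlled by \(B(t)\), but neither of your remedies removes it.
\begin{itemize}
\item The Brezis--Gallou\"et/Kozono--Taniuchi inequality helps only if \(\norm{\nabla(a,Y)}_{B^0_{\infty,\infty}}\) is bounded independently, and nothing supplies that. It is a zero-order norm of \(\nabla C\), essentially as hard as \(\norm{\nabla C}_{L^\infty}\).
\item Differentiating the conformation equation along characteristics produces the source \((\nabla^2u)\,C\), so you would need \(\nabla^2u\in L^1_tL^\infty\). But \(u\in L^1_tB^1_{\infty,1}\) is only the hypothesis \(\int_0^T B\,dt<\infty\) restated, since low frequencies are controlled by the energy. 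Parabolic smoothing up to \(\nabla^2u\) needs \(\tau_1Y\in L^1_tB^1_{\infty,1}\) in the forcing. That reintroduces the quantity you are estimating, and nothing in your plan closes this loop.
\item Even granting a \(B^0_{\infty,\infty}\) bound, your obstacle factor is squared. The log-Sobolev inequality then produces \(\log^2(e+\calE_3)\), not \(\log(e+\calE_3)\). Since \(\int^\infty dy/(y\log^2y)<\infty\), Osgood's lemma does not exclude blow-up of \(\calE_3\).
\item The fallback through Lemma~\ref{lem:physical-commutator} needs an independent \(L^2_t\) bound on a low norm such as \(\norm{\nabla Y}_{L^3}\), the three-dimensional analogue of \(\norm{\Log C}_{H^{1+\eps}}\). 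No such low-order estimate exists in three dimensions. At the \(H^3\) tier this norm is again \(\lesssim\calE_3^{1/2}\), which makes the inequality superlinear.
\end{itemize}

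\textbf{The missing idea is Gagliardo--Nirenberg interpolation against the window.} You do not need any \(W^{1,\infty}\) control of \((a,Y)\). Consider any term whose derivatives are split between \(\nabla u\) and \((a,Y)\), for instance \(\partial^\gamma u\cdot\nabla\partial^{\beta-\gamma}Y\) with \(k=|\gamma|\in[1,m]\). With \(\theta=(m+1-k)/m\), it satisfies
\[
  \norm{\partial^{k-1}\nabla u\;\partial^{m+1-k}Y}_{L^2}
  \le C\norm{\nabla u}_{L^\infty}^{\theta}\norm{u}_{H^{m+1}}^{1-\theta}
  \norm{Y}_{L^\infty}^{1-\theta}\norm{Y}_{H^m}^{\theta}.
\]
Pair this with \(\partial^\beta Y\), with or without the bounded weight, and apply Young's inequality. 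The term is then bounded by \(\delta\nu\norm{u}_{H^{m+1}}^2+C_{\delta,K}\norm{\nabla u}_{L^\infty}^{2\theta/(1+\theta)}\norm{Y}_{H^m}^2\), and \(2\theta/(1+\theta)\le1\). The Moser composition and product bounds for \(\tau_1(a)Y\), \(G(a)\), \(S:Y\) and \(aS\) use only the \(L^\infty\) bounds of the window. Every term carrying \(m+1\) derivatives of \(u\) is absorbed by the viscosity. This gives
\[
  \frac{d}{dt}\calE_m+c\nu\norm{u}_{H^{m+1}}^2\le C_K(1+B)\calE_m
\]
directly at every \(m\), so the two tiers are not even needed. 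This is the linear inequality \eqref{eq:3d-low-tier} that the paper asserts at \(s_0=3\).

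The paper justifies that step only by \(H^3(\T^3)\hookrightarrow W^{1,\infty}\) together with \eqref{eq:3d-tame-commutator}. Taken literally, that bounds \(\norm{C-I}_{W^{1,\infty}}\) by \(E_3^{1/2}\) and by itself gives a superlinear inequality, so your suspicion of this step is justified. The remedy, however, is interpolation, not a logarithmic inequality. The rest of your plan agrees with the paper: \(q_2\equiv0\), the weighted energy, the second tier and the restart.
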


\begin{proof}
The proof has two steps.  We first close a fixed subcritical tier, and then use
that tier as the coefficient control in the integer high-order estimate.  This
spells out where the three-dimensional assumptions enter and avoids using the
unknown \(H^m\) norm as a coefficient in its own Gronwall inequality.

Let
\[
  E_s(t)\simeq \norm{u(t)}_{H^s}^2+\norm{h(t)}_{H^s}^2+\norm{Y(t)}_{H^s}^2,
  \qquad
  \calD_s(t)=\norm{\nabla u(t)}_{H^s}^2,
  \qquad h=a-1,
\]
where the equivalence constants depend only on the compact window.  In FENE-P
we use the positive metric
\[
  \frac{3\alpha}{2}f_b'(a)|h|^2+\frac{\alpha f_b(a)}{2a}|Y|^2,
  \qquad f_b=\frac{b-3}{b-3a},
\]
and in Oldroyd--B the corresponding metric is a scalar multiple of
\(a^{-1}|Y|^2\) on the trace-free component.  The window makes these weights and
a finite number of their derivatives bounded and uniformly positive.

The pressure-free principal terms are the same at every Sobolev level.  The
trace-free equation contains
\[
  2aS+\left(SY+YS-\frac23(Y:S)I\right)+(\Omega Y-Y\Omega).
\]
The first term is the only active stretching block.  The passive symmetric
block is estimated by \(\eta\calD_s+C_{\eta,K}E_s\), while the rotation block is
skew in the principal dyadic energy and contributes only commutators.  On the
stress side, Lemma~\ref{lem:3d-pressure-residual} gives a cancellative
\(Y\)-channel plus the residual \(R_2(Y)\); for Oldroyd--B and FENE-P the
residual coefficient is zero.  The active pairings are therefore
\[
  \left\langle \frac{\alpha}{2a}Z,2aS\right\rangle
  =\alpha Z:S
\]
for Oldroyd--B, and
\[
  \left\langle
  \left(\frac{3\alpha}{2}f_b'(a)\xi,\frac{\alpha f_b(a)}{2a}Z\right),
  \left(\frac23Y:S,2aS\right)\right\rangle
  =
  \alpha\bigl(f_b'(a)\xi\,Y+f_b(a)Z\bigr):S
\]
for FENE-P.  These cancel exactly the top-order pairings with
\(\operatorname{div}Y\) and \(\operatorname{div}(f_bY)\), respectively, after
projection onto divergence-free velocities.

We now record the estimates used to close the lower and upper tiers.  For
\(|\beta|\le s\), the transport terms satisfy the endpoint commutator bound
\[
  \left|
  \left\langle \partial^\beta(u\cdot\nabla W),\partial^\beta W\right\rangle
  \right|
  \le
  C_s B(t)\norm{W}_{H^s}^2,
  \qquad W\in\{u,h,Y\},
\]
where \(B(t)=\norm{\nabla u(t)}_{B^0_{\infty,1}}\).  The coefficient and
composition remainders are treated in tame form: for any smooth coefficient
\(\Phi(C)\) on the compact window and \(s\ge3\),
\begin{equation}
  \norm{[\partial^\beta,\Phi(C)]F}_{L^2}
  \le
  C_{s,K}\bigl(1+\norm{C-I}_{W^{1,\infty}}^{N_s}\bigr)
  \norm{C-I}_{H^s}\norm{F}_{H^{s-1}},
  \label{eq:3d-tame-commutator}
\end{equation}
with constants depending only on finitely many derivatives of \(\Phi\) on the
window.  This is where the three-dimensional argument uses one low tier beyond
the energy level.

Take first \(s_0=3\).  Since \(H^3(\T^3)\hookrightarrow W^{1,\infty}\), the same
pressure-free cancellation, \eqref{eq:3d-tame-commutator}, and Young's
inequality give
\begin{equation}
  \frac{d}{dt}E_3(t)+c\calD_3(t)
  \le
  C_K(1+B(t))(1+E_3(t)).
  \label{eq:3d-low-tier}
\end{equation}
Thus
\[
  M_3:=\sup_{0\le t<T}E_3(t)<\infty
\]
whenever \(\int_0^T B(t)\,dt<\infty\).  In particular
\(\norm{C-I}_{W^{1,\infty}}\) is bounded by a constant depending on \(M_3\).

Now let \(m\ge4\).  Repeating the differentiated estimate at order \(m\), all
coefficient derivatives in \eqref{eq:3d-tame-commutator} are controlled by
\(M_3\), while the unique top-order velocity--conformation coupling has already
been cancelled by the weighted active pairing above.  The nonlinear convection
term is controlled by \(B(t)E_m\), and the remaining passive stretching,
coefficient, and pressure-free stress remainders satisfy
\[
  |\mathcal R_m(t)|
  \le
  \eta\calD_m(t)+C_{m,K,M_3}(1+B(t))(1+E_m(t)).
\]
Choosing \(\eta\) small gives the closed high-order inequality
\begin{equation}
  \frac{d}{dt}E_m(t)+c\calD_m(t)
  \le
  C_{m,K,M_3}(1+B(t))(1+E_m(t)).
  \label{eq:3d-high-tier}
\end{equation}
The coefficient in \eqref{eq:3d-high-tier} is integrable on \([0,T)\), since
\(M_3<\infty\) and \(B\in L^1(0,T)\).  Gronwall therefore yields
\(\sup_{t<T}E_m(t)<\infty\).  This proves the stated bound for
\(\norm{u}_{H^m}+\norm{C-I}_{H^m}\), because the compact window makes the
\((h,Y)\) and \(C-I\) norms equivalent.

The continuation statement follows by restarting the standard local strong
solution theory from times approaching \(T\).  The restart time depends only on
the compact window and on the uniform \(H^m\) bound above, not on the particular
approaching time.
\end{proof}

\begin{theorem}[Three-dimensional Oldroyd--B pure velocity clock]
\label{thm:3d-ob-clock}
Let \((u,C)\) be a strong three-dimensional Oldroyd--B solution on its maximal
interval \([0,T_*)\), with \(C_0>0\), \((u_0,C_0-I)\in H^m\), and \(m\ge4\).  If
\(T_*<\infty\), then
\[
  \int_0^{T_*}\norm{\nabla u(t)}_{\Besov}\,dt=\infty .
\]
\end{theorem}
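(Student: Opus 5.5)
We argue by contradiction: assume \(T_*<\infty\) and
\[
  \int_0^{T_*}\norm{\nabla u(t)}_{\Besov}\,dt<\infty .
\]
The plan is to show that the hypotheses of Proposition~\ref{prop:3d-compact-window} then hold on \([0,T_*)\) with a fixed compact window. That proposition gives uniform \(H^m\) bounds and continuation beyond \(T_*\), which contradicts maximality. Since Proposition~\ref{prop:3d-compact-window} already contains the high-order two-tier closure, and the residual channel is absent for Oldroyd--B (\(q_2\equiv0\) by Lemma~\ref{lem:3d-pressure-residual}), the only genuinely new ingredient is propagation of the spectral window in three dimensions.

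For window propagation I would repeat the Lagrangian comparison of Lemma~\ref{lem:oldroyd-spectral-window} on \(\T^3\). The endpoint clock gives \(\nabla u\in L^1(0,T_*;L^\infty)\), so the flow map \(X(t;x_0)\) is well defined and bi-Lipschitz on \([0,T_*)\). Along a trajectory, \(\mathcal A(t)=C(t,X(t;x_0))\) solves
\[
  \dot{\mathcal A}=\nabla u\,\mathcal A+\mathcal A(\nabla u)^T-\lambda^{-1}(\mathcal A-I).
\]
Let \(M(t)\) and \(m(t)\) be the extreme eigenvalues; they are Lipschitz in \(t\). Testing against unit eigenvectors gives the Dini inequalities
\[
  \dot M\le 2\norm{\nabla u}_{L^\infty}M+\lambda^{-1},
  \qquad
  \dot m\ge-\bigl(2\norm{\nabla u}_{L^\infty}+\lambda^{-1}\bigr)m .
\]
These estimates are dimension-free, since they only use \(|v^T\nabla u\,v|\le\norm{\nabla u}_{L^\infty}\) for unit \(v\). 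Using the embedding \(\norm{\nabla u}_{L^\infty}\le C\norm{\nabla u}_{\Besov}\) on \(\T^3\), Gronwall's inequality yields constants \(\kappa,K\) with
\[
  \kappa I\le C(t,x)\le KI \quad\text{on }[0,T_*)\times\T^3 .
\]
These constants depend only on the initial spectrum, \(\lambda\), \(T_*\) and the clock. The initial spectrum is compact in the positive cone because \(C_0>0\) is continuous on the compact torus, as \(H^m\hookrightarrow C^0\) for \(m\ge4\).

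With the window in hand, I would apply Proposition~\ref{prop:3d-compact-window} with \(T=T_*\) to get \(\sup_{t<T_*}(\norm{u}_{H^m}+\norm{C-I}_{H^m})<\infty\). The three-dimensional analogue of the local continuation principle, Proposition~\ref{prop:local-continuation}, then provides a uniform restart time from times close to \(T_*\), so the solution extends past \(T_*\), a contradiction.

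The main point requiring care is that the low tier \(E_3\) in Proposition~\ref{prop:3d-compact-window} closes using only \(B\in L^1_t\) and the window, and not any logarithmic \(H^{1+\eps}\) input as in two dimensions. As stated, that proposition asks only for the window and the \(L^1\) clock, so I would simply invoke it. I would also check the restart step: the local theory's existence time must depend only on the \(H^m\) size and the spectral bounds, exactly as in the proof of Proposition~\ref{prop:local-continuation}.
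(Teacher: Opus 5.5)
Your proposal is correct and follows the paper's proof step for step. You assume the clock is finite, propagate a compact positive-definite window for \(C\) by the dimension-free Lagrangian eigenvalue comparison plus Gronwall (using \(\norm{\nabla u}_{L^\infty}\lesssim\norm{\nabla u}_{\Besov}\)), and then invoke Proposition~\ref{prop:3d-compact-window} for the uniform \(H^m\) bound and the restart beyond \(T_*\). Your Dini inequalities drop the favourable relaxation terms that the paper keeps, which is harmless, and as in the paper all the analytic weight rests on the low-tier closure of Proposition~\ref{prop:3d-compact-window} that you rightly flagged.
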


\begin{proof}
Assume the clock is finite.  Along particle trajectories the maximal and
minimal eigenvalues satisfy
\[
  \dot\lambda_{\max}\le
  2\norm{\nabla u}_{L^\infty}\lambda_{\max}
  -\lambda^{-1}\lambda_{\max}+\lambda^{-1},
\]
and
\[
  \dot\lambda_{\min}\ge
  -2\norm{\nabla u}_{L^\infty}\lambda_{\min}
  -\lambda^{-1}\lambda_{\min}+\lambda^{-1}.
\]
Since \(\norm{\nabla u}_{L^\infty}\lesssim B(t)\), Gronwall propagates a
compact positive-definite window for \(C\).  Proposition~\ref{prop:3d-compact-window}
then gives a uniform \(H^m\) bound and restarts the solution beyond \(T_*\), a
contradiction.
\end{proof}

\begin{theorem}[Three-dimensional FENE-P blow-up alternative]
\label{thm:3d-fene-alternative}
Let \(b>3\), and let \((u,C)\) be a strong three-dimensional FENE-P solution on
its maximal interval \([0,T_*)\), with \(C_0>0\), \(\tr C_0<b\),
\((u_0,C_0-I)\in H^m\), and \(m\ge4\).  If \(T_*<\infty\), then at least one of the following
alternatives occurs:
\[
  \int_0^{T_*}\norm{\nabla u(t)}_{\Besov}\,dt=\infty,
\]
\[
  \liminf_{t\uparrow T_*}\inf_x\lambda_{\min}C(t,x)=0,
\]
or
\[
  \liminf_{t\uparrow T_*}\inf_x\bigl(b-\tr C(t,x)\bigr)=0 .
\]
\end{theorem}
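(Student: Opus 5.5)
The argument goes by contradiction, and it reduces directly to Proposition~\ref{prop:3d-compact-window}. Suppose \(T_*<\infty\) and that none of the three alternatives holds. Then three facts are available: \(\int_0^{T_*}B(t)\,dt<\infty\); there are \(\kappa_1>0\) and \(t_1<T_*\) with \(\lambda_{\min}C(t,x)\ge\kappa_1\) for \(t\in[t_1,T_*)\); and there are \(\kappa_2>0\) and \(t_2<T_*\) with \(b-\tr C(t,x)\ge\kappa_2\) for \(t\in[t_2,T_*)\). The \(\liminf\) formulation only controls a terminal interval. So the first step is to extend these bounds to all of \([0,T_*)\).

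On \([0,t_0]\) with \(t_0=\max(t_1,t_2)<T_*\), the solution is a strong solution in \(C([0,t_0];H^m)\) with \(m\ge4\). By \(H^m\hookrightarrow C^0\) and compactness of \([0,t_0]\times\T^3\), the continuous functions \(\lambda_{\min}C\) and \(b-\tr C\) attain positive minima, since the solution lies pointwise in \(\calD_{b,3}\). Setting \(\kappa=\min\) of these four constants, we get
\[
  \kappa I\le C(t,x),\qquad \tr C(t,x)\le b-\kappa
\]
on \([0,T_*)\times\T^3\). This is exactly the compact FENE window demanded in Proposition~\ref{prop:3d-compact-window}. The upper bound \(C\le bI\) is automatic from the trace bound and positivity.

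Next I apply Proposition~\ref{prop:3d-compact-window} on \([0,T_*)\) with this window and the finite clock. This gives
\[
  \sup_{t<T_*}\bigl(\norm{u}_{H^m}+\norm{C-I}_{H^m}\bigr)<\infty.
\]
The window persists up to \(T_*\), so the proposition's continuation clause, through the three-dimensional analogue of Proposition~\ref{prop:fene-local-continuation}, restarts the solution beyond \(T_*\). This contradicts maximality.

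The only genuine point of care is the passage from \(\liminf\) to a uniform window, that is, the compactness argument on the initial segment. Everything else is inherited: the vanishing of the residual \(q_2\equiv0\) for FENE-P (Lemma~\ref{lem:3d-pressure-residual}), and the two-tier closure inside Proposition~\ref{prop:3d-compact-window}. I would also note that the first alternative cannot be weakened to remove the other two here. Without the squared clock, Lemma~\ref{lem:finite-extensibility-clock-exponent} does not propagate the trace gap, and the lower-cone Dini inequality needs \(f_b\) bounded. That is why this result is a three-channel alternative and not a pure-clock criterion.
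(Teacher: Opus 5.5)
Your proof is correct and follows the paper's route. Negating the three alternatives gives a finite \(L^1_tB^0_{\infty,1}\) clock and a compact FENE window, and Proposition~\ref{prop:3d-compact-window} then yields the uniform \(H^m\) bound and a restart past \(T_*\), contradicting maximality; your continuity-and-compactness step on \([0,t_0]\), which turns the \(\liminf\) conditions (controlling only a terminal interval) into a window on all of \([0,T_*)\), is a detail the paper's one-line proof leaves implicit, and you handle it correctly.
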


\begin{proof}
If none of the three alternatives occurs, then \(C\) remains in a compact FENE
window and the pure clock is finite.  Proposition~\ref{prop:3d-compact-window}
therefore gives the uniform \(H^m\) bound and a restart beyond \(T_*\), a
contradiction.
\end{proof}

\begin{theorem}[Three-dimensional FENE-P squared-clock criterion]
\label{thm:3d-fene-square-clock}
Under the hypotheses of Theorem~\ref{thm:3d-fene-alternative}, finite-time
breakdown implies
\[
  \int_0^{T_*}\norm{\nabla u(t)}_{\Besov}^2\,dt=\infty .
\]
\end{theorem}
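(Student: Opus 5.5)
We argue by contradiction. Suppose \(T_*<\infty\) and
\[
  \int_0^{T_*}\norm{\nabla u(t)}_{\Besov}^2\,dt<\infty .
\]
The plan is to show that none of the three alternatives in Theorem~\ref{thm:3d-fene-alternative} can then occur; that theorem's contradiction finishes the proof. First, since \(T_*<\infty\), Cauchy--Schwarz in time gives
\[
  \int_0^{T_*}B(t)\,dt\le T_*^{1/2}\Bigl(\int_0^{T_*}B(t)^2\,dt\Bigr)^{1/2}<\infty,
\]
so the pure clock stays finite and the first alternative is excluded.

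Next we propagate the trace gap, repeating Lemma~\ref{lem:fene-window-propagation} in three dimensions. Taking the trace of the conformation equation with \(f_b=f_{b,3}=(b-3)/(b-\tr C)\) gives
\[
  D_t r=2C:\nabla u-\lambda^{-1}\bigl(f_b r-3\bigr),\qquad
  H(r)=\frac{(b-3)r}{b-r}-3=\frac{b(b-3)}{b-r}-b .
\]
Since \(b>3\), this satisfies \(H(r)\ge c_0(b-r)^{-1}-c_1\) with \(c_0=b(b-3)>0\), so \(\theta=1\). In addition \(|C:\nabla u|=|C:S|\le b\,\norm{\nabla u}_{L^\infty}\), because \(0<C\le bI\) as long as \(\tr C<b\). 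Lemma~\ref{lem:finite-extensibility-clock-exponent} is stated for \(C\in\Spp^2\), but its proof only uses this scalar ODE comparison, which is dimension-free. Hence the exponent \(p_\theta=2\) applies, and together with \(\norm{\nabla u}_{L^\infty}\lesssim B\) it yields \(b-\tr C\ge\eta>0\) on \([0,T_*)\times\T^3\), starting from the initial gap. This excludes the third alternative. The constant \(\eta\) depends on the compact initial range of \(C_0\); this range is compact because \(C_0\in H^m\subset C^0\) is continuous on \(\T^3\) with \(C_0>0\) and \(\tr C_0<b\).

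With the trace gap fixed, \(f_b(C)\le (b-3)/\eta\) uniformly. Along characteristics the smallest eigenvalue then satisfies the Dini inequality
\[
  \dot m\ge-\bigl(2\norm{\nabla u}_{L^\infty}+\lambda^{-1}\norm{f_b}_{L^\infty}\bigr)m ,
\]
obtained by testing with the unit eigenvector; the source \(+\lambda^{-1}I\) only helps. Gronwall, using \(\int_0^{T_*} B\,dt<\infty\), gives a positive lower bound for \(\lambda_{\min}C\), which excludes the second alternative. At this point \(C\) lies in a compact FENE window \(\kappa I\le C\), \(\tr C\le b-\kappa\), and the clock is finite. We can then either invoke Proposition~\ref{prop:3d-compact-window} directly for the uniform \(H^m\) bound and restart, or simply observe that all three alternatives of Theorem~\ref{thm:3d-fene-alternative} fail; either way \(T_*\) is not maximal.

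The main obstacle is the trace-gap step, because the earlier lemmas are written for two dimensions. We need to check that the three-dimensional restoring term still has singularity exponent exactly one with a positive coefficient, which uses \(b>3\), and that the stretching bound \(|C:S|\le bL\) holds without any two-dimensional structure. Both are pointwise matrix facts, so we would state them explicitly and then cite the proof of Lemma~\ref{lem:finite-extensibility-clock-exponent} verbatim.
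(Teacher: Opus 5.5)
Your proposal is correct and follows the paper's own proof. Both arguments pass from the squared clock to the $L^1_t$ clock on the finite interval. Both propagate the trace gap by completing the square against the $g^{-2}$ restoring term; your check that $H(r)=b(b-3)/(b-r)-b$ gives $\theta=1$ in three dimensions is the paper's identity $D_tg=-2S:C+\lambda^{-1}b(b-3-g)/g$. Both then propagate the lower eigenvalue by the Dini--Gronwall inequality once $f_b$ is bounded, and conclude through Theorem~\ref{thm:3d-fene-alternative}. Your remarks on the dimension-independence of Lemma~\ref{lem:finite-extensibility-clock-exponent} and on the compactness of the initial range make explicit two points the paper leaves implicit.
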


\begin{proof}
It remains to show that the squared clock propagates the compact FENE window.
Set \(r=\tr C\) and \(g=b-r\).  Taking the trace of the FENE-P equation gives
\[
  D_tg=-2S:C+\lambda^{-1}\frac{b(b-3-g)}{g}.
\]
Thus \(D_t(-\log g)\) is bounded above by
\[
  C\left(1+\norm{\nabla u}_{L^\infty}^2\right),
\]
after completing the square against the positive singular term \(g^{-2}\).
The squared Besov clock therefore propagates a positive trace gap on finite
time intervals.  The lower eigenvalue satisfies the usual Dini inequality
\[
  \dot\lambda_{\min}\ge
  -C\norm{\nabla u}_{L^\infty}\lambda_{\min}-C_T\lambda_{\min}+c_T,
\]
once the trace gap bounds \(f_b\).  Since the squared clock implies the
\(L^1_tL^\infty_x\) velocity-gradient clock on finite intervals, the positive
lower eigenvalue bound is also propagated.  Hence \(C\) stays in a compact
FENE window, and Theorem~\ref{thm:3d-fene-alternative} rules out finite-time
breakdown under the squared clock.
\end{proof}

\paragraph{Entropy-mobility side condition.}
The preceding three-dimensional theorems are velocity-clock results.  They use
spectral barriers only to keep the solution in a compact physical window; after
that, the pressure-free high-order estimate closes without a three-dimensional
logarithmic conformation energy.  The following finite-dimensional calculation is
included to locate the obstruction faced by a stronger program: deriving the
compact-window logarithmic estimate directly in three-dimensional FENE-P entropy
variables.  In that program the mobility map from the Peterlin stress variables
to the entropy variables must be monotone.  The number \(q_*(b,r)\) below is
precisely the trace-dependent variance threshold for that monotonicity.  Thus
Proposition~\ref{prop:3d-fene-mobility} supports the scope statement of this
section; it is not an additional step in the preceding velocity-clock proofs.

\begin{definition}[Entropy-mobility admissible windows for three-dimensional FENE-P]
\label{def:3d-fene-mobility-window}
Let \(b>3\).  For a positive spectrum
\(\ell=(\ell_1,\ell_2,\ell_3)\) with \(r=\ell_1+\ell_2+\ell_3<b\), set
\[
  q(\ell)=\sum_{i=1}^3\left(\ell_i-\frac r3\right)^2,\qquad
  \theta=\frac{b-r}{b-3}.
\]
When \(\theta>4b/3\), define
\[
  A_*(b,r)
  =\frac{\theta(b-r)}{3b}+\frac{r^2}{9b},
  \qquad
  q_*(b,r)
  =\frac{A_*(b,r)b^2}{\theta/4-b/3}.
\]
The quantity \(q(\ell)\) is the spectral variance at fixed trace, and
\(q_*(b,r)\) is the largest variance allowed by positivity of the
entropy-variable mobility Jacobian when the unconditional estimate is
unavailable.  A compact FENE window \(K\Subset\{C>0,\ \tr C<b\}\) is
entropy-mobility admissible if either \(b\ge15/4\), or there exists
\(\gamma>0\) such that, for every spectrum of every \(C\in K\) with
\(\theta>4b/3\),
\[
  q(\ell)\le q_*(b,r)-\gamma .
\]
\end{definition}

\begin{proposition}[Entropy-variable mobility condition for three-dimensional FENE-P]
\label{prop:3d-fene-mobility}
Let \(b>3\), \(c=b-3\), \(r=\ell_1+\ell_2+\ell_3<b\), and
\[
  \psi(r)=\frac{c}{b-r},\qquad
  p_i=\psi(r)\ell_i-1,\qquad
  F_i=\psi(r)-\ell_i^{-1}.
\]
Then the symmetric part of the Jacobian \(D_pF\) is positive definite for every
positive spectrum with \(r<b\) if \(b\ge15/4\).  More generally, on every
entropy-mobility admissible compact FENE window \(K\) there is \(c_K>0\) such that
\[
  \frac12\left(D_pF+(D_pF)^T\right)\ge c_K I .
\]
For \(3<b<15/4\), the condition in Definition~\ref{def:3d-fene-mobility-window}
is sharp at the level of this finite-dimensional Jacobian: without an
anisotropy restriction, the symmetric part can be indefinite.
\end{proposition}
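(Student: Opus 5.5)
The plan is to reduce positivity of the symmetric part of \(D_pF\) to a quadratic form in the spectral variables \(\ell\), and then to minimise that form using only the trace \(r\) and the variance \(q(\ell)\). Write \(\mathbf 1=(1,1,1)\), \(w=(\ell_i^{-1})\) and \(D=\mathrm{diag}(\ell_i^{-2})\). The change of variables \(\ell\mapsto p\) has Jacobian
\[
  M=D_\ell p=\psi I+\psi'\,\ell\,\mathbf 1^T ,
\]
which is invertible for \(r<b\), since \(\det M=\psi^2(\psi+\psi' r)>0\). Also
\[
  D_\ell F=D+\psi'\,\mathbf 1\mathbf 1^T ,
\]
and so \(D_pF=D_\ell F\,M^{-1}\). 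Substituting \(x=My\) gives
\[
  x^TD_pFx=y^T M^TD_\ell F\,y .
\]
Since \(M\) is invertible with bounds depending only on the window, a uniform bound \(\mathrm{sym}(D_pF)\ge c_KI\) is equivalent to a uniform lower bound for the symmetric part of \(N=M^TD_\ell F\).

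A direct computation gives
\[
  \mathrm{sym}\,N=\psi D+\psi'(\psi+\psi' r)\mathbf 1\mathbf 1^T+\tfrac{\psi'}2(\mathbf 1w^T+w\mathbf 1^T).
\]
Its quadratic form is
\[
  Q(y)=\psi\sum_i \frac{y_i^2}{\ell_i^2}+\psi'(\psi+\psi'r)\,s^2+\psi'\,s\,t,
  \qquad s=\sum_i y_i,\quad t=\sum_i \frac{y_i}{\ell_i}.
\]
Here \(\psi=c/(b-r)\) and \(\psi'=c/(b-r)^2=\psi^2/c\). In terms of \(\theta=(b-r)/(b-3)\) this reads \(\psi=1/\theta\) and \(\psi'=1/(c\theta^2)\); this is how \(\theta\) enters \(A_*\) and \(q_*\).

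I would substitute \(z_i=y_i/\ell_i\). Then the first term of \(Q\) is \(\psi|z|^2\), \(t=\mathbf 1\cdot z\) and \(s=\ell\cdot z\). The only indefinite piece is the cross term \(\psi' st\). I would decompose \(z\) along \(\mathbf 1\), along \(\ell-\frac r3\mathbf 1\), and along their orthogonal complement. Along the complement \(Q\) is just \(\psi|z|^2>0\). Along \(\mathbf 1\) and \(\ell^\circ:=\ell-\frac r3\mathbf 1\), with \(|\ell^\circ|^2=q(\ell)\), the form reduces to an explicit \(2\times2\) symmetric matrix whose entries depend only on \(r\), \(q\), \(\psi\) and \(\psi'\). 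Positive definiteness then becomes the condition that this matrix has positive diagonal and positive determinant. The determinant is an affine function of \(q\). I expect it to take the form
\[
  (\theta/4-b/3)\,q< A_*(b,r)\,b^2
\]
up to normalisation, with \(A_*(b,r)\) collecting the \(q\)-independent terms \(\theta(b-r)/(3b)+r^2/(9b)\).

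The conclusions then follow from the sign of the coefficient of \(q\) in that inequality. If \(\theta\le 4b/3\), the coefficient of \(q\) has the favourable sign and positivity holds for all \(q\). If \(\theta>4b/3\), positivity holds exactly when \(q<q_*(b,r)\). I would then check that \(\theta\le 4b/3\) holds for every \(r\in(0,b)\) precisely when \(b\ge 15/4\). Since \(\theta\le b/(b-3)\), this should come from the inequality \(b/(b-3)\le 4b/3\), i.e. \(b\ge 15/4\), giving the unconditional statement.

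On an admissible compact window, the margin \(q\le q_*-\gamma\), together with compactness of \(K\), turns the strict determinant inequality into a uniform lower bound. Combined with the bounds on \(M\) and \(M^{-1}\), this gives \(c_K\). For sharpness when \(3<b<15/4\), I would take \(r\) near \(0\), where \(\theta\) is near \(b/(b-3)>4b/3\). I would then choose a positive spectrum with \(q\) above \(q_*\), if such a spectrum exists with positive \(\ell_i\); otherwise I would push \(r\) until it does. The vector \(y\) attaining the negative eigenvalue of the \(2\times2\) block then witnesses indefiniteness.

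The main obstacle is carrying out the \(2\times2\) reduction exactly, with the correct constants, so that the determinant condition matches the paper's expressions for \(A_*\) and \(q_*\). In particular, the cross term mixes the \(\mathbf 1\)-component with \(s=\ell\cdot z\), which involves both \(r\) and \(\ell^\circ\). I would first do the computation symbolically in the orthonormal basis \(\{\mathbf 1/\sqrt3,\ \ell^\circ/\sqrt q\}\), and check it against the isotropic case \(q=0\), where positivity must hold. Only after that would I compare the result with \(q_*\).
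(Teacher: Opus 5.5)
Your argument for the two positive assertions is correct and is essentially the paper's. The only difference is that you work with the congruent matrix \(M^TD_\ell F\) instead of inverting \(D_\ell p\) by Sherman--Morrison. In your basis \(\{\mathbf 1/\sqrt3,\ \ell^\circ/\sqrt q\}\) the \(2\times2\) determinant is a positive multiple of
\[
  \theta(b-r)+\frac{r^2}{3}-q\Bigl(\frac{3\theta}{4b}-1\Bigr),
\]
which is the paper's bracket \(a_*g_*-h_*^2/4\) up to a positive factor. So the dichotomy between \(\theta\le 4b/3\) and \(q<q_*(b,r)\), the threshold \(b\ge 15/4\) coming from \(\sup_r\theta=b/(b-3)\), and the compact-window bound all go through as you predict.

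The sharpness step fails. You need a positive spectrum with \(\theta>4b/3\) and \(q>q_*(b,r)\). But every positive spectrum satisfies \(q=|\ell|^2-\frac{r^2}{3}<\frac{2r^2}{3}\).

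Near \(r=0\) this cannot work: there \(q\to0\), while \(q_*\to b^2\theta/(\frac34\theta-b)>0\).

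Moving \(r\) does not help in general either. The condition \(q_*(b,r)<\frac{2r^2}{3}\) is equivalent to
\[
  r^2(\theta-2b)>2b\theta(b-r),
\]
which forces \(\theta>2b\). Since \(\theta=(b-r)/(b-3)<b/(b-3)\le 2b\) whenever \(b\ge 7/2\), this never happens for such \(b\).

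Consequently, for \(b\in[7/2,15/4)\) the symmetric part of \(D_pF\) is positive definite at every positive spectrum with \(r<b\). The claimed indefiniteness is false on that range, so your fallback ``otherwise push \(r\) until it does'' cannot be carried out. An indefinite witness exists only for \(b\) quite close to \(3\) (numerically \(b\lesssim 3.04\)). It then sits at \(r\) close to \(b\) with \(b-r>2b(b-3)\) and \(\ell\approx(r,0,0)\), not at \(r\) near \(0\).

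The paper's proof makes the same unverified claim (``spectra with sufficiently large variance \dots\ violate the determinant condition'') and never checks \(q<\frac23 r^2\). So this is a defect of the statement, not only of your plan.

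What your computation does establish is the exact characterisation: for \(\theta>4b/3\), positivity holds if and only if \(q<q_*(b,r)\). The number \(15/4\) is the threshold for the sufficient test \(\theta\le 4b/3\), not for positive definiteness itself.
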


\begin{proof}
Let \(e=(1,1,1)^T\), \(D=\operatorname{diag}(\ell_i^{-1})\), and
\[
  \theta=\frac{b-r}{b-3}.
\]
The Jacobians with respect to \(\ell\) are
\[
  J_p=\psi I+\psi'\ell e^T,\qquad
  J_F=\operatorname{diag}(\ell_i^{-2})+\psi' ee^T,\qquad
  \psi'=\frac{b-3}{(b-r)^2}.
\]
Since
\[
  J_p^{-1}=\theta\left(I-\frac1b\ell e^T\right),
\]
we obtain
\[
  D_pF
  =\theta\,\operatorname{diag}(\ell_i^{-2})
   -\frac{\theta}{b}\operatorname{diag}(\ell_i^{-1})ee^T
   +\frac1b ee^T .
\]
Hence, for \(z\in\R^3\),
\[
\begin{aligned}
  z^T\frac{D_pF+(D_pF)^T}{2}z
  &=\theta\sum_i\frac{z_i^2}{\ell_i^2}
    -\frac{\theta}{b}\left(\sum_i\frac{z_i}{\ell_i}\right)
      \left(\sum_i z_i\right)
    +\frac1b\left(\sum_i z_i\right)^2 .
\end{aligned}
\]
Put \(w_i=z_i/\ell_i\).  Then the quadratic form becomes
\[
  Q(w)=\theta |w|^2-\frac{\theta}{b}(e\cdot w)(\ell\cdot w)
       +\frac1b(\ell\cdot w)^2 .
\]
Write
\[
  \ell=\frac r3 e+m,\qquad e\cdot m=0,\qquad q=|m|^2 .
\]
If \(q=0\), the displayed form is plainly positive.  If \(q>0\), set
\[
  A=e\cdot w,\qquad M=m\cdot w.
\]
For fixed \(A\) and \(M\), the smallest possible \(|w|^2\) is
\[
  \frac{A^2}{3}+\frac{M^2}{q}.
\]
Thus positivity is equivalent to positivity of
\[
  a_*A^2+g_*M^2+h_*AM,
\]
where
\[
  a_*=\frac{\theta(b-r)}{3b}+\frac{r^2}{9b},\qquad
  g_*=\frac{\theta}{q}+\frac1b,\qquad
  h_*=\frac{2r/3-\theta}{b}.
\]
A direct simplification gives
\[
  a_*g_*-\frac{h_*^2}{4}
  =\theta\left\{\frac{a_*}{q}
       -\frac{\theta/4-b/3}{b^2}\right\}.
\]
If \(\theta\le4b/3\), the determinant is positive.  Since
\[
  \theta\le \frac{b}{b-3}\le\frac{4b}{3}
\]
whenever \(b\ge15/4\), this proves unconditional positivity in that range.  If
\(\theta>4b/3\), the determinant condition is exactly
\[
  q<\frac{a_*b^2}{\theta/4-b/3}=q_*(b,r).
\]
On a compact entropy-mobility admissible window, the strict gap gives a uniform
lower bound \(c_K\).  Conversely, if \(3<b<15/4\), spectra with sufficiently large
variance and \(\theta>4b/3\) violate the determinant condition, so the symmetric
Jacobian is indefinite.
\end{proof}

\begin{remark}[Relation with the velocity-clock criteria]
\label{rem:3d-heat-entropy-route}
The velocity-clock criteria above do not require this entropy-mobility condition: once a compact window is propagated by the flow-map or trace-gap barriers, Proposition~\ref{prop:3d-compact-window} closes the high-order estimate directly.  Proposition~\ref{prop:3d-fene-mobility} addresses a different question: whether one can build a self-contained three-dimensional logarithmic clock using entropy variables.  In log
variables \(B=\Log C\), the compact FENE set
\[
  K_B(\kappa)=\{B=B^T:\ B\ge(\log\kappa)I,\ \tr(e^B)\le b-\kappa\}
\]
is convex, and heat mollification preserves it.  Thus a heat-relative entropy
energy comparing \(B\) with \(B_\rho=e^{\rho\Delta}B\) remains inside the
physical window.  On entropy-mobility admissible windows the relaxation part is
coercive in the entropy variable; the remaining analytic point is an endpoint
bilinear heat commutator of the form
\[
  \int_0^1 \rho^{-s_0}
  \left|\left\langle B-B_\rho,
  [\calC(B),e^{\rho\Delta}]\nabla u\right\rangle\right|
  \frac{d\rho}{\rho}
  \lesssim
  \norm{\nabla u}_{B^0_{\infty,1}}\norm{B}_{H^{s_0}}^2,
  \qquad \frac32<s_0<2 .
\]
This remark is not used as a theorem below.  Its role is to mark the precise
boundary between the compact-window three-dimensional criteria proved here and
a possible fully self-contained three-dimensional log-conformation theory.
\end{remark}

\section{Hookean limit and relation between the two criteria}
\label{sec:hookean-limit}

The FENE-P criterion reduces to the Oldroyd--B criterion only in a controlled
limit.  Suppose \(0<C\le M\Id\) and \(\tr C\le M\), with \(M\) independent of
\(b\).  Then
\[
  f_b(C)=\frac{b-2}{b-\tr C}
  =1+\frac{\tr C-2}{b}+O_M(b^{-2}),
\]
and hence
\[
  T_b(C)=C-\Id+\frac{\tr C-2}{b}C+O_M(b^{-2}).
\]
Moreover
\[
  \phi_b(C)=-\log b+O_M(b^{-1}),\qquad
  \nabla\phi_b(C)=\frac{\nabla\tr C}{b-\tr C}=O_M(b^{-1})\nabla C .
\]
Thus, on bounded trace windows independent of \(b\), the FENE-P physical
criterion formally reduces to the Oldroyd--B positive-cone criterion.  At fixed
\(b\), or along sequences for which \(\tr C\) approaches \(b\), this reduction
is invalid at the level of pointwise barrier control.  The endpoint theorem
therefore propagates the trace gap before applying the pressure-free high-order
estimate.

\begin{proposition}[Entropy does not control trace-gap oscillation]
\label{prop:barrier-independent-unified}
Fix \(b>2\).  There are smooth diagonal fields \(C_N:\T^2\to\calD_b\) whose
eigenvalues stay in a fixed positive interval and whose positive-cone entropy
\[
  \int_{\T^2}\bigl(\tr C_N-\log\det C_N\bigr)\dd
\]
is uniformly bounded, while
\[
  \norm{\phi_b(C_N)}_{H^{1+\eps}}\to\infty .
\]
\end{proposition}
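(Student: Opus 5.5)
The construction will use fields of the form \(C_N=\mathrm{diag}(\ell_N(x),\ell_N(x))\), whose trace approaches \(b\) on a thin oscillating set while the eigenvalues stay in a fixed compact positive interval. The point is that \(\phi_b(C)=-\log(b-\tr C)\) is insensitive to the logarithmic spectrum away from the boundary, but close to the trace boundary it amplifies the oscillations of \(\tr C\).

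Fix \(b>2\) and \(\eta\in(0,1)\), and choose
\[
  g_N(x)=b-\tr C_N(x)=\eta+\tfrac{\eta}{2}\,N^{-1-\eps}\dots
\]
Better, since a uniform trace gap would make \(\phi_b\) smooth in \(\tr C\) with bounded derivatives, I will instead take a fixed trace gap profile \(\eta\) with rapid oscillation. Concretely, set
\[
  \tr C_N(x)=b-\eta\bigl(2+\sin(Nx_1)\bigr)/3,\qquad C_N=\tfrac12\tr C_N\,\Id,
\]
with \(\eta\) small enough that \(\tfrac12\tr C_N\ge \tfrac12(b-\eta)>1\). Then:

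\begin{itemize}
\item The eigenvalues lie in \([\tfrac12(b-\eta),\tfrac b2]\), a fixed positive interval.
\item \(C_N\in\calD_b\), because \(b-\tr C_N\ge\eta/3\).
\item The entropy density \(\tr C_N-\log\det C_N\) is bounded pointwise independently of \(N\), so its integral over \(\T^2\) is uniformly bounded.
\end{itemize}

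Next I will check that \(\phi_b(C_N)=-\log\bigl(\eta(2+\sin Nx_1)/3\bigr)\) is a fixed smooth function \(F(y)=-\log(\eta(2+\sin y)/3)\) evaluated at \(y=Nx_1\). Since \(F\) is nonconstant and \(2\pi\)-periodic, it has a nonzero Fourier mode \(\hat F(k_0)\). The function \(F(Nx_1)\) then has a coefficient of size \(|\hat F(k_0)|\) at frequency \(k_0N\), which gives \(\norm{\phi_b(C_N)}_{H^{1+\eps}}\gtrsim N^{1+\eps}\to\infty\).

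The main point to justify is the interpretation. The example shows that the entropy only sees zero-order quantities, while the barrier's Sobolev norm blows up through oscillation, exactly as in Theorem~\ref{thm:no-go}. The trace gap is not required to collapse. If one wants the stronger reading, with trace-gap concentration, the oscillation amplitude can instead approach the boundary, \(g_N=\eta_N(2+\sin Nx_1)/3\) with \(\eta_N\downarrow0\) slowly. The entropy stays bounded because \(\calF\) here is only the positive-cone entropy without the \((b-2)\log\) term. In that version, \(\nabla\phi_b=\nabla g_N/g_N\) has size \(N\) independent of \(\eta_N\), so the lower bound holds just the same, and the trace-boundary term of \(\calF_b\) grows only like \(|\log\eta_N|\).
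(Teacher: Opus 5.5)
Your construction is correct and is essentially the paper's: the paper also takes $C_N=\tfrac12\theta_N\,\Id$ with $\theta_N=\theta_0+a_N\sin(Nx_1)$, but with a vanishing amplitude $a_N\to0$, $a_NN^{1+\eps}\to\infty$. Your fixed amplitude is a slight simplification, since the lower bound $\gtrsim N^{1+\eps}$ then follows directly from one nonzero Fourier mode of the fixed profile $F$. Note, though, that in your final choice the trace gap stays $\ge\eta/3$, so the blow-up comes purely from oscillation of $\tr C_N$ composed with a smooth function, not from the boundary amplification your opening paragraph describes.
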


\begin{proof}
Choose
\[
  \theta_N(x)=\theta_0+a_N\sin(Nx_1),
\]
with \(0<\theta_0<b\), \(a_N\to0\), and
\(a_NN^{1+\eps}\to\infty\).  Let
\[
  C_N(x)=\frac{\theta_N(x)}2\,\Id .
\]
For \(N\) large, the eigenvalues remain in a fixed positive interval and the
positive-cone entropy is uniformly bounded.  However
\[
  \phi_b(C_N)=-\log(b-\theta_N),
\]
whose \(H^{1+\eps}\) norm grows like \(a_NN^{1+\eps}\).  Thus positive-cone
entropy does not control high-frequency oscillation of the trace-gap barrier.
The FENE-P endpoint criterion above avoids assuming this barrier norm directly;
it uses the pointwise barrier equation to recover the compact window and the
pressure-free physical variables to close the derivative estimate.
\end{proof}

\section{Conclusion}

The pressure quotient isolates the part of an elastic stress that can force an
incompressible velocity.  In two dimensions, every smooth spectral isotropic
stress has active part \(q_1Y\) after pressure projection.  The trace-free
conformation equation then supplies the matching principal stretching term
\(2aS(u)\), and the weight \(q_1/a\) cancels the top-order
velocity--stress coupling.  On compact conformation windows this gives the
endpoint high-order coefficient
\[
  1+\norm{\nabla u}_{B^0_{\infty,1}}
  +\norm{\Log C}_{H^{1+\varepsilon}}^2 .
\]

For Oldroyd--B the compact positive-cone window is propagated by the
\(L^1_tB^0_{\infty,1}\) velocity clock, and the pressure-free low-order energy
provides the logarithmic channel required by the high-order estimate.  For
FENE-P the same active-deviatoric cancellation applies after the splitting
\(T_b(C)=(f_ba-1)I+f_bY\); the finite-extensibility trace gap is propagated by
the squared \(L^2_tB^0_{\infty,1}\) clock.

The three-dimensional statements have a narrower scope, and the restriction is
structural.  For spectral isotropic laws with a smooth local Cayley--Hamilton
representation on a compact spectral window, the pressure-free stress splits
into a cancellative \(Y\)-channel and a quadratic residual \((Y^2)^\circ\).  The
residual is generically independent of \(Y\); hence the exact scalar quotient
closure is a two-dimensional phenomenon.  In three dimensions the residual is
estimated separately and absorbed by viscosity on compact windows.  General
anisotropic or non-spectral closures may contain additional pressure-free
directions and are not covered by this quotient closure.  For Oldroyd--B and
FENE-P the quadratic coefficient is zero, and the compact windows used in the
continuation criteria are supplied by the corresponding model-specific barriers.

All criteria are strong-solution continuation criteria.  The proof uses
integer-order Sobolev differentiations, Moser estimates for spectral functions,
and pointwise conformation barriers.  It does not construct Leray-type weak
solutions or prove weak-solution regularization.  This does not leave the
criterion without objects: the local principles in Section~\ref{sec:functional}
give positive-time strong solutions for integer-Sobolev data with compact
initial conformation windows.  The endpoint clocks are therefore blow-up
exclusion mechanisms for maximal strong solutions in that established local
class, not substitutes for a critical \(H^1\) or energy-level well-posedness
theory.  The spectrally admissible class is used only as a compact-window
quotient template.  Thermodynamic polymer laws require the additional
free-energy compatibility and relaxation-dissipation conditions stated in
Remark~\ref{rem:thermodynamic-subclass}.

The fixed-spectrum examples show why the logarithmic channel enters the
estimates: entropy, trace, determinant, and finite-extensibility bounds do not
control high-frequency rotation of the active deviatoric stress.  These examples
are static obstructions for the pressure-free stress map.  They do not prove
that loss of logarithmic regularity forces finite-time blow-up of the evolution.
For the two model systems, the continuation loop closes instead through the
propagated conformation window, the logarithmic regularity estimate, and the
endpoint velocity clock.

\paragraph{Acknowledgements.}
The author acknowledges financial support from the National Natural Science
Foundation of China (NSFC, Grant No. 12501602), the Education Department of
Hunan Province (Grant No. 24C0055), the Science and Technology Department of
Hunan Province (Grant No. 2025JJ60052), and the Scientific Research Start-up
Fund of Xiangtan University (Grant No. KZ0810769).

\end{document}